\begin{document}

\newtheorem{theorem}{Theorem}[section]
\newtheorem{main}{Main}[section]
\newtheorem{proposition}[theorem]{Proposition}
\newtheorem{corollary}[theorem]{Corollary}
\newtheorem{definition}[theorem]{Definition}
\newtheorem{lemma}[theorem]{Lemma}
\newtheorem{example}[theorem]{Example}
\newtheorem{remark}[theorem]{Remark}
\newtheorem{question}[theorem]{Question}
\newtheorem{conjecture}[theorem]{Conjecture}
\newtheorem{fact}[theorem]{Fact}
\newtheorem*{ac}{Acknowledgements}

\title{Young's inequality for locally compact quantum groups}
\author{Zhengwei Liu 
 \footnote{Department of Mathematics and Department of Physics, Harvard University, Cambridge, 02138, USA. Email: \href{zhengweiliu@fas.harvard.edu}{zhengweiliu@fas.harvard.edu}} 
\and
Simeng Wang 
\footnote{Laboratoire de Mathematiques, Universite de Franche -Comte, Besancon Cedex, 25030, France and Institute of Mathematics, Polish Academy of Sciences, ul. Sniadeckin 8, 00-956 Warszawa, Poland.
Email: \href{simeng.wang@univ-fcomte.fr}{simeng.wang@univ-fcomte.fr}}
\and
Jinsong Wu 
\footnote{ School of Mathematical Sciences, University of Science and Technology of China, Hefei, 230026, P.R.China.
Email: \href{wjsl@ustc.edu.cn}{wjsl@ustc.edu.cn}}
}
\date{}

\maketitle

\begin{abstract}
In this paper, we generalize Young's inequality for locally compact quantum groups and obtain some results for extremal pairs of Young's inequality and extremal functions of Hausdorff-Young inequality.
\end{abstract}

\section{INTRODUCTION}
A fundamental result in Group theory is Young's inequality which was first studied by Young \cite{Young} in 1912.  Let $G$ be a locally compact group with a modular function $\delta_0$. Suppose $\frac{1}{r}+1=\frac{1}{p}+\frac{1}{q}$, $p,q,r\in [1,\infty]$ and norms and convolution are defined with relative to a left Haar measure. Then Klein and Russo \cite{KlRu} formulate Young's inequality as
$$\|f*g\delta_0^{1/p'}\|_r\leq \|f\|_p\|g\|_q,$$
where $f\in L^p(G), g\in L^q(G)$ and $\frac{1}{p}+\frac{1}{p'}=1$.
Now one could ask following two natural questions:
\begin{itemize}
\item[1.] Is the coefficient $1$ of $\|f\|_p\|g\|_q$ the best constant for Young's inequality?
\item[2.] Are there extremal functions for Young's inequality exists? If so what are they?
\end{itemize}
For the first question, it is not true in general and Beckner\cite{Beck} proved a sharp Young's inequlity for convolution on $\mathbb{R}^n$:
$$\|f*g\|_r\leq (A_pA_qA_{r'})^n\|f\|_p\|g\|_q,$$
where $f\in L^p(\mathbb{R}^n)$, $g\in L^q(\mathbb{R}^n)$, $p,q,r\in [1,\infty]$, $\frac{1}{r}+1=\frac{1}{p}+\frac{1}{q}$, $\frac{1}{m}+\frac{1}{m'}=1$, $A_m=(m^{1/m}/{m'}^{1/m'})^{1/2}$.
In 1977, Fournier \cite{Four} proved that the coefficient $1$ is the best constant for Young's inequality for a unimodular locally compact group if and only if it contains a compact open subgroup.

For the second question, Fournier \cite{Four} proved for unimodular locally compact groups that if the best constant for Young's inequality is $1$, then $f$ and $g$ are a left translation and a right translation of a subcharacter respectively.
Beckner\cite{Beck} showed for $\mathbb{R}^n$ in which case the best constant is not $1$, that the extremal functions are Gaussian.
Klein and Russo \cite{KlRu} showed that the sharp Young's inequality for Heisenberg groups does not admit any extremal functions.

There are several ways to generalize locally compact groups.
One of them is Kac algebra which was elaborated independently by Enock and Schwartz \cite{EnSchwart}, and by Kac and Vajnermann in the 1970s.
In 2000, Kustermans and Vaes \cite{KuVaes} introduced the locally compact quantum group which is a generalization of Kac algebra.
Note that the subfactor \cite{Jon83} also can be viewed as a generalization of Kac algebra.
It was shown by Enock and Nest \cite{EnNe} that there is one to one correspondence between Kac algebras and irreducible depth-$2$ subfactors.
For the finite-index case, C. Jiang, Z. Liu and J. Wu \cite{JLW} proved Young's inequality for subfactors.

In this paper, our goal is to generalize Young's inequality for locally compact quantum groups. We show that
\begin{main}[Theorem \ref{younglcqg}]
Let $\mathbb{G}$ be a locally compact quantum group. For $1\leq p,q,r\leq 2$ with $\frac{1}{r}+1=\frac{1}{p}+\frac{1}{q}$, let $p'$ be such that $\frac{1}{p}+\frac{1}{p'}=1$.  Suppose $x\in L^p(\mathbb{G})$ and $y\in L^q(\mathbb{G})$. Then
$$\|x*\rho_{-i/p'}(y)\|_r\leq \|x\|_p\|y\|_q.$$
\end{main}
We refer to Section 2 for the notations.

When the scaling automorphism group $\tau$ of $\mathbb{G}$ is nontrivial, Young's inequality is not true for $1\leq p,q,r\leq \infty$ in general.
If the scaling automorphism group is trivial, we have the following theorem.

\begin{main}[Theorem \ref{youngkac}]
Let $\mathbb{G}$ be a locally compact quantum group whose scaling automorphism group is trivial.
Suppose $1\leq p,q,r\leq \infty $, $\frac{1}{p}+\frac{1}{q}=\frac{1}{r}$.
If $\varphi=\psi$, then for $x\in L^p(\mathbb{G})$ and $y\in L^q(\mathbb{G})$, we have
$$\|x*y\|_r \leq \|x\|_p\|y\|_q.$$
If $\varphi$ is tracial, then for $x\in L^p(\mathbb{G})$ and $y\in L^q(\mathbb{G})$, we have
$$\|x*y\delta^{-1/p'}\|_r \leq \|x\|_p\|y\|_q.$$
\end{main}

Note that the noncommutative $L^p$ space given here is taken with respect to the left Haar weight. So is the convolution.
One could define the convolution with respect to the right Haar weight when the noncommutative $L^p$ space is taken with respect to the right Haar weight.

We also give the definition of shifts of group-like projections and show that they are extremal element for the Hausdorff-Young inequality given in \cite{Cooney}.
Similar results for Young's inequality is also obtained.
But we are not sure that all the extremal elements for the Hausdorff-Young inequality are shifts of group-like projections.

The rest of the paper is organized as follows.
In Section 2 we give a brief introduction to locally compact quantum groups and noncommutative L$^p$ spaces.
In Section 3 we prove Young's inequality for locally compact quantum groups.
In Section 4 we investigate the properties of shifts of group-like projections and show that they are extremal functions for Hausdorff-Young inequality.
% Alternatively you can use the \section{INTRODUCTION}, to get it numbered
\section{PRELIMINARIES}
In this section we will recall the definition of locally compact quantum groups and noncommutative $L^p$ spaces.
Let $\mathcal{M}$ be a von Neumann algebra with a normal, semi-finite, faithful weight $\varphi$. Recall that
$$\mathfrak{N}_\varphi=\{x\in\mathcal{M}|\varphi(x^*x)<\infty\},
\quad\mathfrak{M}_\varphi=\mathfrak{N}_\varphi^*\mathfrak{N}_\varphi,$$
where $\mathfrak{M}_\varphi$ is a *-subalgebra of $\mathcal{M}$.
Denote by $\mathcal{H}_\varphi$ the Hilbert space obtained by completing $\mathfrak{N}_\varphi$.
The map $\Lambda_\varphi:\mathfrak{N}_\varphi\mapsto \mathcal{H}_\varphi$ is the inclusion map.
Denote by $\pi_\varphi$ the *-isomorphism of $\mathcal{M}$ on $\mathcal{H}_\varphi$ given by $\pi_\varphi(a)\Lambda_\varphi(b)=\Lambda_\varphi(ab)$ for any $a\in\mathcal{M}$, $b\in\mathfrak{N}_\varphi$.
The triplet $(\pi_\varphi,\mathcal{H}_\varphi,\Lambda_\varphi)$ is the semi-cyclic representation of $\mathcal{M}$. We denote by $\nabla_\varphi$ the modular operator for $\varphi$, $\sigma_t^\varphi$ the modular automorphism group for $\varphi$, $t\in\mathbb{R}$, $J_\varphi$ the conjugate unitary on $\mathcal{H}_\varphi$.

A locally compact quantum group $\mathbb{G}=(\mathcal{M},\Delta,\varphi,\psi)$ consists of
\begin{itemize}
\item[(1)] a von Neumann algebra $\mathcal{M}$,
\item[(2)] a normal, unital, *-homomorphism $\Delta: \mathcal{M}\to\mathcal{M}\overline{\otimes}\mathcal{M}$ such that
$(\Delta\otimes\iota)\circ\Delta=(\iota\otimes\Delta)\circ\Delta$,
\item[(3)] a normal, semi-finite, faithful weight $\varphi$ such that
$(\iota\otimes\varphi)\Delta(x)=\varphi(x) 1$, $\forall x\in \mathfrak{M}_\varphi^+$;\\
a normal, semi-finite, faithful weight $\psi$ such that
$(\psi\otimes \iota)\Delta(x)=\psi(x)1$, $\forall x\in \mathfrak{M}_\psi^+$,
\end{itemize}
where $\overline{\otimes}$ denotes the von Neumann algebra tensor product and $\iota$ denotes the identity map. The normal, unital, *-homomorphism $\Delta$ is a comultiplication of $\mathcal{M}$, $\varphi$ is the left Haar weight, and $\psi$ is the right Haar weight.

We assume that $\mathcal{M}$ acts on $\mathcal{H}_\varphi$.
There exists a unique unitary operator $W\in \mathcal{B}(\mathcal{H}_\varphi\otimes\mathcal{H}_\varphi)$ which is known as multiplicative unitary defined by
$$W^*(\Lambda_\varphi(a)\otimes\Lambda_\varphi(b))=(\Lambda_\varphi\otimes\Lambda_\varphi)(\Delta(b)(a\otimes 1)), a,b\in\mathfrak{N}_\varphi.$$
Moreover for any $x\in\mathcal{M}$, $\Delta(x)=W^*(1\otimes x)W.$

For the locally compact quantum group $\mathbb{G}=(\mathcal{M},\Delta,\varphi,\psi)$ above, there exist the unitary antipode $R$, the scaling automorphism group $\tau_t$, $t\in\mathbb{R}$ and the antipode $S$ on $\mathcal{M}$. There exists a modular element $\delta$ such that $\psi=\varphi_\delta=\varphi R$. For the properties, we refer to \cite{KuVaes} for more details.

For $\mathbb{G}=(\mathcal{M},\Delta,\varphi,\psi)$, there always exist a dual locally compact quantum group $\hat{\mathbb{G}}=(\hat{\mathcal{M}},\hat{\Delta},\hat{\varphi},\hat{\psi})$. The corresponding von Neumann algebra acting on $\mathcal{H}_\varphi$ is given by
$$\hat{\mathcal{M}}=\{(\omega\otimes\iota)(W)|\omega\in \mathcal{B}(\mathcal{H}_\varphi)_*\}^{-\sigma-\text{strong}-*}.$$
The element $(\omega\otimes\iota)(W)$ is denoted by $\lambda(\omega)$ in general which is also know as the Fourier transform of the restriction $\omega|_\mathcal{M}$ of $\omega$ on $\mathcal{B}(\mathcal{H})$.
The comultiplication $\hat{\Delta}$ is given by
$$\hat{\Delta}(x)=\hat{W}^*(1\otimes x)\hat{W},\quad \hat{W}=\Sigma W^* \Sigma,$$
where $\Sigma$ is the flip on $\mathcal{H}_\varphi\otimes\mathcal{H}_\varphi$.
The dual left Haar weight $\hat{\varphi}$ is defined to be the unique normal, semi-finite, faithful weight on $\hat{\mathcal{M}}$ with GNS triple $(\mathcal{H}_\varphi,\iota,\hat{\Lambda})$ such that
$\lambda(\mathcal{I})$ is a core for $\hat{\Lambda}$ and $\hat{\Lambda}(\lambda(\omega))=\xi(\omega),\omega\in \mathcal{I}$, where
$$\mathcal{I}=\{\omega\in\mathcal{M}_*|\Lambda_\varphi(x)\mapsto\omega(x^*), x\in\mathfrak{N}_\varphi\text{ is bounded}\},$$
and $\xi(\omega)$ is given by $\omega(x^*)=\langle\xi(\omega),\Lambda_\varphi(x)\rangle$.
The dual right Haar weight $\hat{\psi}=\hat{\varphi}\hat{R}$, where $\hat{R}$ is the dual unitary antipode. For more details on dual quantum groups, we refer to \cite{KuVaes} again.

A locally compact quantum group $\mathbb{G}$ is a Kac algebra if its scaling automorphism group $\tau$ is trivial and $\sigma^\varphi=\sigma^\psi$. A locally compact quantum group is of compact type if $\varphi=\psi$ is a state.

Now we would like to recall some notations on noncommutative L$^p$ space.
Let $\mathcal{M}$ be a von Neumann algebra with a normal semifinite faithful weight $\varphi$. Denote by
$$\mathcal{T}_\varphi=\{x\in\mathcal{M}|x \text{ is analytic w.r.t. }\sigma \text{ and } \sigma_z(x)\in\mathfrak{N}_\varphi^*\cap\mathfrak{N}_\varphi,\forall z\in\mathbb{C}\}.$$
Let
\begin{eqnarray*}
\mathcal{L}_\varphi&=&\{x\in\mathcal{M}|\exists \varphi^{(x)}\in\mathcal{M}_*\text{ such that }\forall a,b\in\mathcal{T}_\varphi:\\
&&\varphi^{(x)}(a^*b)=\langle xJ_\varphi\nabla_\varphi^{-1/2}\Lambda_\varphi(a),J_\varphi\nabla_\varphi^{1/2}\Lambda_\varphi(b)\rangle\}
\end{eqnarray*}
Denote by $x\varphi$ the functional given by $x\varphi(y)=\varphi(yx)$ and  $\varphi x$ the functional given by $(\varphi x)(y)=\varphi(xy)$.
If there exists a bounded functional $\varphi^{(x)}\in\mathcal{M}_*$ such that $\varphi^{(x)}(y)=(x\varphi)(y)$ for any $y$ in the domain $\mathcal{D}(x\varphi)$,
then we denote $\varphi^{(x)}$ by $x\varphi$ again for simplicity i.e. $\varphi^{(x)}=x\varphi$ for $x$ in $\mathcal{L}_\varphi$.
We reserve $\varphi_x$ for $\varphi(x^{1/2}\cdot x^{1/2})$ when $x$ is a positive self-adjoint element affiliated with $\mathcal{M}$.
For any functional $\phi$, we denote by $\overline{\phi}$ the functional given by $\overline{\omega}(x)=\overline{\omega(x^*)}$ for any $x$ in $\mathcal{M}$.

Let $\mathcal{R}_\varphi=\{x^*|x\in\mathcal{L}_\varphi\}$. Then for any $x\in\mathcal{R}_\varphi$, $\varphi x\in\mathcal{M}_*$ under the convention above. In \cite{Cas}, Caspers showed that $\mathcal{T}_\varphi^2\subseteq \mathcal{L}_\varphi$.

For any $x\in\mathcal{L}_\varphi$, the norm is defined by
$$\|x\|_{\mathcal{L}_\varphi}=\max\{\|x\|,\|x\varphi\|\}.$$
Then for $p\in(1,\infty)$, $L^p(\mathbb{G})$ is the complex interpolation space $(\mathcal{M},\mathcal{M}_*)_{[1/p]}$ and $L^1(\mathbb{G})=\mathcal{M}_*$, $L^\infty(\mathbb{G})=\mathcal{M}$.

We quote the Theorem 4.1.2 in \cite{Bergh} for future use in the paper.
\begin{proposition}\label{map}
Let $\theta\in [0,1]$. Let $T$ be a morphism between compatible couples $(E_0,E_1)$ and $(F_0,F_1)$. Then it restricts a bounded linear map $T:(E_0,E_1)_{[\theta]}\to (F_0,F_1)_{[\theta]}$. The norm is bounded by $\|T\|\leq \|T:E_0\to F_0\|^{1-\theta}\|T: E_1\to F_1\|^{\theta}.$
\end{proposition}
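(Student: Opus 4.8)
The plan is to argue directly from Calderón's description of the complex interpolation functor underlying the spaces $(E_0,E_1)_{[\theta]}$. Write $\mathcal{S}=\{z\in\mathbb{C}:0\le\operatorname{Re}z\le1\}$ for the closed strip and $\mathcal{S}^\circ$ for its interior, and let $\mathcal{F}(E_0,E_1)$ be the space of functions $f:\mathcal{S}\to E_0+E_1$ that are bounded and continuous on $\mathcal{S}$, analytic on $\mathcal{S}^\circ$, and for which $t\mapsto f(j+it)$ is a continuous $E_j$-valued map vanishing as $|t|\to\infty$, $j=0,1$, normed by $\|f\|_{\mathcal{F}}=\max_{j}\sup_{t}\|f(j+it)\|_{E_j}$. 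Then $(E_0,E_1)_{[\theta]}=\{f(\theta):f\in\mathcal{F}(E_0,E_1)\}$ with $\|a\|_{[\theta]}=\inf\{\|f\|_{\mathcal{F}}:f(\theta)=a\}$. So it suffices to show that for every $a\in(E_0,E_1)_{[\theta]}$ the element $Ta$ lies in $(F_0,F_1)_{[\theta]}$ with the stated bound. A first routine step records that $T$, being bounded on each $E_j$, extends to a bounded map $E_0+E_1\to F_0+F_1$: for a decomposition $a=a_0+a_1$ one has $\|Ta\|_{F_0+F_1}\le M_0\|a_0\|_{E_0}+M_1\|a_1\|_{E_1}$, where $M_j=\|T:E_j\to F_j\|$, and taking the infimum over decompositions gives $\|Ta\|_{F_0+F_1}\le\max(M_0,M_1)\|a\|_{E_0+E_1}$.

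Given $a$ and $\varepsilon>0$, choose $f\in\mathcal{F}(E_0,E_1)$ with $f(\theta)=a$ and $\|f\|_{\mathcal{F}}\le\|a\|_{[\theta]}+\varepsilon$. The crude estimate comes from pushing $f$ forward: the function $z\mapsto Tf(z)$ is $(F_0+F_1)$-valued, analytic on $\mathcal{S}^\circ$ and continuous and bounded on $\mathcal{S}$ (composition of the bounded operator $T$ with $f$), and its boundary traces satisfy $\|Tf(j+it)\|_{F_j}\le M_j\|f(j+it)\|_{E_j}$, so that $Tf\in\mathcal{F}(F_0,F_1)$ and $Ta=Tf(\theta)\in(F_0,F_1)_{[\theta]}$ with $\|Ta\|_{[\theta]}\le\max(M_0,M_1)\|f\|_{\mathcal{F}}$. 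This already establishes interpolation, but only with the non-sharp constant $\max(M_0,M_1)$.

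To recover the geometric mean one rebalances the two edges by a scalar multiplier. Assuming for the moment $M_0,M_1>0$, for $\lambda>0$ set $g(z)=\lambda^{z-\theta}\,Tf(z)$. Since $z\mapsto\lambda^{z-\theta}$ is entire with $|\lambda^{z-\theta}|=\lambda^{\operatorname{Re}z-\theta}$ constant on each vertical line, $g$ again belongs to $\mathcal{F}(F_0,F_1)$ and $g(\theta)=Ta$, while now
\[
\|g(it)\|_{F_0}\le\lambda^{-\theta}M_0\|f\|_{\mathcal{F}},\qquad \|g(1+it)\|_{F_1}\le\lambda^{1-\theta}M_1\|f\|_{\mathcal{F}}.
\]
Hence $\|Ta\|_{[\theta]}\le\max(\lambda^{-\theta}M_0,\lambda^{1-\theta}M_1)\|f\|_{\mathcal{F}}$, and choosing $\lambda=M_0/M_1$ equalizes the two factors at the common value $M_0^{1-\theta}M_1^{\theta}$. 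Letting $\varepsilon\to0$ yields $\|Ta\|_{[\theta]}\le M_0^{1-\theta}M_1^{\theta}\|a\|_{[\theta]}$, which is the claim.

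The genuinely substantive point is the verification that $g$ lands in $\mathcal{F}(F_0,F_1)$: one must check vector-valued analyticity of $z\mapsto Tf(z)$ on $\mathcal{S}^\circ$ and continuity on $\mathcal{S}$ into $F_0+F_1$ (both inherited from the corresponding properties of $f$ via boundedness of $T$ on the sum space), and that the boundary decay $\|f(j+it)\|_{E_j}\to0$ passes to $\|g(j+it)\|_{F_j}$ (immediate, since $|\lambda^{z-\theta}|$ is bounded on each line). The remaining care is with the degenerate cases $M_0=0$ or $M_1=0$: if $M_1=0$ then $g(1+it)\equiv 0$ and $\|g\|_{\mathcal{F}}\le\lambda^{-\theta}M_0\|f\|_{\mathcal{F}}\to0$ as $\lambda\to\infty$ when $\theta>0$, forcing $Ta=0$, and symmetrically for $M_0=0$ via $\lambda\to0$; these are exactly read off by the conventions $0^{s}=0$ for $s>0$ in $M_0^{1-\theta}M_1^{\theta}$. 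Apart from this bookkeeping, the optimization of $\lambda$ is the only step beyond routine checking, so I expect no serious obstacle.
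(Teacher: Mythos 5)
Your argument is correct and is essentially the standard proof of Theorem 4.1.2 in Bergh--L\"{o}fstr\"{o}m, which the paper simply quotes without reproving: push an admissible analytic function forward by $T$ and rebalance the two boundary lines with the entire factor $\lambda^{z-\theta}$, optimizing $\lambda=M_0/M_1$ to turn $\max(\lambda^{-\theta}M_0,\lambda^{1-\theta}M_1)$ into the geometric mean $M_0^{1-\theta}M_1^{\theta}$. Your handling of the degenerate cases $M_j=0$ and of the membership of $Tf$ in $\mathcal{F}(F_0,F_1)$ is also in order, so there is nothing to add.
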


In \cite{Cas}, Caspers proved that $\mathcal{H}_\varphi\cap\mathcal{M}=\mathfrak{N}_\varphi$ and $\mathcal{M}_*\cap\mathcal{H}_\varphi=\mathcal{I}$.
 Moveover $(\mathcal{H}_\varphi,\mathcal{M}_*)_{[2/p-1]}=L^p(\mathbb{G})$ for $p\in(1,2]$ and $(\mathcal{M},\mathcal{H}_\varphi)_{[2/q]}=L^q(\mathbb{G})$ for $q\in[2,\infty)$.
 For more details on this, we refer to \cite{Cas}.

Hence for $p\in[1,2]$ and $\omega\in\mathcal{I}$, the $L^p$-Fourier transform
$$\mathcal{F}_p:L^p(\mathbb{G})\to L^q(\widehat{\mathbb{G}}),\quad \frac{1}{p}+\frac{1}{q}=1,$$
is given by
$\mathcal{F}_p(\xi_p(\omega))=\hat{\Lambda}_q(\lambda(\omega))$,
where $\xi_p:\mathcal{I}\to L^p(\mathbb{G})$ is the inclusion map for $1\leq p\leq 2$, $\hat{\Lambda}_q:\mathfrak{N}_{\hat{\varphi}}\to L^q(\widehat{\mathbb{G}})$ is the inclusion map for $q\geq 2$.

The Hausdorff-Young inequality says that $\|\mathcal{F}_p\|\leq 1$.

Let $\phi$ be a normal, semi-finite faithful weight on $\mathcal{M}'$ acting on $\mathcal{H}_\varphi$. For $p\in[1,\infty)$, the Hilsum's $L^p$ space $L^p(\phi)$ is the space of closed densely defined operators $x$ on the GNS-space $\mathcal{H}_\varphi$ of $\varphi$ such that if $x=u|x|$ is the polar decomposition, then $|x|^p$ is the spatial derivative of a positive linear functional $\omega\in \mathcal{M}_*$ and $u\in\mathcal{M}$. For more details on noncommutative L$^p$ spaces, we refer \cite{Terp81}.
Let $d=\frac{d\varphi}{d\phi}$ be the spatial derivative relative to $\varphi$.
In \cite{Cas}, they prove that
there is an isometric isomorphism $\Phi_p: L^p(\phi)\to L^p(\mathbb{G})$ such that
$$\Phi_p: [abd^{1/p}]\mapsto l^p(ab), \quad a,b\in\mathcal{T}_\varphi,$$
where $l^p:\mathcal{L}_\varphi\to L^p(\mathbb{G})$ is the inclusion map, and $[x]$ is the closure of a preclosed operator $x$.

\begin{lemma}\label{normchange}
Let $\mathbb{G}=(\mathcal{M},\Delta,\varphi,\psi)$ be a locally compact quantum group. If $\alpha$ is an automorphism of $\mathcal{M}$, then for any $x$ in $\mathcal{L}_\varphi$,
$$\|x\|_{p,\varphi}=\|\alpha(x)\|_{p,\varphi\alpha^{-1}},$$
where p-norm $\|\cdot\|_{p,\eta}$ is the norm of complex interpolation L$^p$-space relative to a normal semi-finite weight $\eta$ on $\mathcal{M}$.

If $\alpha$ is an anti-automorphism of $\mathcal{M}$, then for any $x$ in $\mathcal{L}_\varphi$, we have
$$\|x\|_{p,\varphi}=\|\alpha(x^*)\|_{p,\varphi\alpha^{-1}}.$$
\end{lemma}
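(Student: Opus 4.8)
The plan is to read both $p$-norms as complex interpolation norms and to realize $\alpha$ as an isometric isomorphism between the two interpolation couples. Recall that $\|\cdot\|_{p,\varphi}$ is the norm on $(\mathcal{M},\mathcal{M}_*)_{[1/p]}$, where the compatible couple structure is fixed by embedding $\mathcal{L}_\varphi$ into $\mathcal{M}$ by the identity and into $\mathcal{M}_*$ by $x\mapsto x\varphi$; the weight enters \emph{only} through this second embedding. Likewise $\|\cdot\|_{p,\varphi\alpha^{-1}}$ is the interpolation norm for the couple in which the $L^1$-embedding is $y\mapsto y(\varphi\alpha^{-1})$. Thus it suffices to check that the relevant map is an isometry at the two endpoints $p=\infty$ and $p=1$ and is compatible with these embeddings; Proposition \ref{map}, applied to the map and to its inverse, then forces equality of the interpolation norms.

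First I treat the automorphism case. On the $L^\infty$ endpoint, $\alpha\colon\mathcal{M}\to\mathcal{M}$ is a $*$-isomorphism, hence isometric. On the $L^1$ endpoint I use precomposition $\omega\mapsto\omega\alpha^{-1}$ on $\mathcal{M}_*$, which is isometric because $\alpha^{-1}$ is a bijective isometry of $\mathcal{M}$. The two endpoints are matched by the computation, valid for $x\in\mathcal{L}_\varphi$ and $y\in\mathcal{M}$,
\[
\big(\alpha(x)(\varphi\alpha^{-1})\big)(y)=\varphi\big(\alpha^{-1}(y)\,x\big)=\big((x\varphi)\alpha^{-1}\big)(y),
\]
so that $\alpha(x)(\varphi\alpha^{-1})=(x\varphi)\alpha^{-1}$ and in particular $\alpha$ maps $\mathcal{L}_\varphi$ into $\mathcal{L}_{\varphi\alpha^{-1}}$ compatibly with both embeddings. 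Hence $\alpha$ is a morphism of compatible couples with isometric endpoint restrictions, and Proposition \ref{map} applied to $\alpha$ and to $\alpha^{-1}$ yields $\|x\|_{p,\varphi}=\|\alpha(x)\|_{p,\varphi\alpha^{-1}}$.

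For the anti-automorphism case I replace $\alpha$ by the map $x\mapsto\alpha(x^*)$, which is conjugate-linear, multiplicative and $*$-preserving, and again isometric on $\mathcal{M}$. Using that $\alpha^{-1}$ is anti-multiplicative together with $\varphi(a^*)=\overline{\varphi(a)}$, one computes for $y\in\mathcal{M}$
\[
\big(\alpha(x^*)(\varphi\alpha^{-1})\big)(y)=\varphi\big(x^*\alpha^{-1}(y)\big)=\big(\overline{x\varphi}\,\alpha^{-1}\big)(y),
\]
i.e. $\alpha(x^*)(\varphi\alpha^{-1})=\overline{x\varphi}\,\alpha^{-1}$, where $\overline{x\varphi}=\varphi x^*$. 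Since both complex conjugation $\omega\mapsto\overline{\omega}$ and precomposition by the anti-automorphism $\alpha^{-1}$ are isometries of $\mathcal{M}_*$, the $L^1$-endpoint is again isometric and compatible with the embeddings. Interpolating the conjugate-linear map (equivalently, passing to the conjugate couple, which carries the same interpolation norms) gives $\|x\|_{p,\varphi}=\|\alpha(x^*)\|_{p,\varphi\alpha^{-1}}$.

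The routine endpoint estimates and the two displayed identities are elementary; the only point requiring care is that the endpoint maps genuinely define a morphism of the two compatible couples, so that Proposition \ref{map} is applicable in both directions and the conjugate-linearity in the anti-automorphism case does not spoil the isometry. Once this is checked, the equalities follow formally from interpolation.
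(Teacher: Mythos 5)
Your proposal is correct and follows essentially the same route as the paper, whose entire proof is the single line ``Directly from Proposition \ref{map}''; you have simply made explicit the endpoint isometries, the compatibility identities $\alpha(x)(\varphi\alpha^{-1})=(x\varphi)\alpha^{-1}$ and $\alpha(x^*)(\varphi\alpha^{-1})=\overline{x\varphi}\,\alpha^{-1}$, and the two-sided application of the interpolation theorem (plus the standard device for handling the conjugate-linearity in the anti-automorphism case) that the paper leaves implicit.
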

\begin{proof}
Directly from Proposition \ref{map}.
\end{proof}

\section{YOUNG'S INEQUALITY}
Let $\mathbb{G}=(\mathcal{M},\Delta,\varphi,\psi)$ be a locally compact quantum group. Suppose that $\omega,\theta\in \mathcal{M}_*$. Then the convolution $\omega *\theta\in\mathcal{M}_*$ of $\omega$ and $\theta$ is defined by
$$(\omega*\theta)(x)=(\omega\otimes\theta)\Delta(x)$$
for any $x$ in $\mathcal{M}$.
We then see that
\begin{equation}\label{eq1}
\|\omega*\theta\|\leq \|\omega\|\|\theta\|.
\end{equation}
If we identify $\mathcal{M}_*$ with $L^1(\mathbb{G})$, then the inequality (\ref{eq1}) is
\begin{equation}\label{eq11}
\|x*y\|_1\leq \|x\|_1\|y\|_1,\quad x,y\in L^1(\mathbb{G}).
\end{equation}

In \cite{KuVaes}, Kusterman and Vaes prove that for any $\omega\in\mathcal{M}_*$ and $\theta\in\mathcal{I}$,
$$\xi(\omega*\theta)=\lambda(\omega)\xi(\theta).$$
Note that $\lambda(\omega)=(\omega\otimes\iota)(W)$, we have
\begin{equation}\label{eq2}
\|\xi(\omega*\theta)\|\leq \|\omega\|\|\xi(\theta)\|.
\end{equation}
If we identify $\mathcal{H}_\varphi$ with $L^2(\mathbb{G})$, then the inequality (\ref{eq2}) is
\begin{equation}\label{eq21}
\|x*y\|_2\leq \|x\|_1\|y\|_2,\quad x\in L^1(\mathbb{G}), y\in L^1(\mathbb{G})\cap L^2(\mathbb{G}).
\end{equation}
Note that $L^1(\mathbb{G})\cap L^2(\mathbb{G})=\mathcal{I}$ is dense in $L^2(\mathbb{G})$.
Hence the convolution $x*y$ of $x\in L^1(\mathbb{G})$ and $y\in L^2(\mathbb{G})$ is well-defined by continuity and moreover
the inequality (\ref{eq21}) is true for $x\in L^1(\mathbb{G})$ and $y\in L^2(\mathbb{G})$.

Now by applying the interpolation theorem, we have
\begin{proposition}
For any $x\in L^1(\mathbb{G})$ and $y\in L^p(\mathbb{G})$, $1\leq p\leq 2$, we have
$$\|x*y\|_p\leq \|x\|_1\|y\|_p.$$
\end{proposition}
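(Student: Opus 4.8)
The plan is to fix $x \in L^1(\mathbb{G})$ and study the left convolution operator $T_x \colon y \mapsto x*y$, interpolating between the two endpoint estimates already at hand. The inequality (\ref{eq11}) tells us that $T_x$ is bounded on $L^1(\mathbb{G}) = \mathcal{M}_*$ with $\|T_x : L^1 \to L^1\| \leq \|x\|_1$, while (\ref{eq21}) tells us that $T_x$ is bounded on $L^2(\mathbb{G}) = \mathcal{H}_\varphi$ with $\|T_x : L^2 \to L^2\| \leq \|x\|_1$. Since for $p \in (1,2]$ one has $L^p(\mathbb{G}) = (\mathcal{H}_\varphi, \mathcal{M}_*)_{[2/p-1]}$, the space $L^p(\mathbb{G})$ is an interpolation space of the compatible couple $(L^2(\mathbb{G}), L^1(\mathbb{G}))$, so everything is set up for an application of Proposition \ref{map}.

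The main step is to check that $T_x$ is a morphism of the compatible couple $(\mathcal{H}_\varphi, \mathcal{M}_*)$, i.e. that the two bounded operators above are restrictions of a single linear map defined on the sum $L^1(\mathbb{G}) + L^2(\mathbb{G})$. For this it suffices to verify that the two definitions of $x*y$ agree on the intersection $\mathcal{I} = L^1(\mathbb{G}) \cap L^2(\mathbb{G})$: for $y \in \mathcal{I}$ the element $x*y$ computed in $\mathcal{M}_*$ via $(\omega*\theta)(z) = (\omega \otimes \theta)\Delta(z)$ and the element $x*y$ realized in $\mathcal{H}_\varphi$ via $\xi(x*y) = \lambda(x)\xi(y)$ must coincide as elements of the sum. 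This is precisely the compatibility expressed by the identity $\xi(\omega*\theta) = \lambda(\omega)\xi(\theta)$ recorded above, together with Caspers' identification $\mathcal{M}_* \cap \mathcal{H}_\varphi = \mathcal{I}$.

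Once the morphism property is in hand, Proposition \ref{map} applies with $\theta = 2/p - 1$ and yields
$$\|T_x : L^p \to L^p\| \leq \|T_x : L^2 \to L^2\|^{1-\theta}\,\|T_x : L^1 \to L^1\|^{\theta} \leq \|x\|_1^{1-\theta}\,\|x\|_1^{\theta} = \|x\|_1,$$
which is exactly the claimed inequality $\|x*y\|_p \leq \|x\|_1 \|y\|_p$. I expect the only real work to be the compatibility check in the second paragraph; the interpolation step is then automatic. One subtlety worth isolating is that the endpoint estimate (\ref{eq21}) was initially established only for $y \in \mathcal{I}$ and extended to all of $L^2(\mathbb{G})$ by density, so one should confirm that the operator $T_x$ fed into the interpolation is genuinely this bounded extension, agreeing with the predual convolution on the dense subspace $\mathcal{I}$.
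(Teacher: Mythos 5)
Your proof is correct and follows exactly the route the paper takes: the paper likewise obtains the proposition by interpolating the operator $y\mapsto x*y$ between the endpoint bounds (\ref{eq11}) on $\mathcal{M}_*$ and (\ref{eq21}) on $\mathcal{H}_\varphi$ via Proposition \ref{map} and Caspers' identification $(\mathcal{H}_\varphi,\mathcal{M}_*)_{[2/p-1]}=L^p(\mathbb{G})$. Your explicit compatibility check on $\mathcal{I}=\mathcal{M}_*\cap\mathcal{H}_\varphi$ is the detail the paper leaves implicit, and it is handled correctly.
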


Recall that $\rho_t$ is the norm continuous one-parameter representation of $\mathbb{R}$ on $\mathcal{M}_*$ such that $\rho_t(\omega)=\omega(\delta^{-it}\tau_{-t}(x))$ for $\omega\in \mathcal{M}_*$, $x\in\mathcal{M}$ and $t\in\mathbb{R}$.
By \cite{KuVaes}, Remark 8.12,  we have that the set $\mathcal{I}_\rho=\{\omega\in\mathcal{I}|\omega\text{ is analytic with respect to } \rho\}$ is dense in $\mathcal{I}\subset \mathcal{M}_*$.
By \cite{Ku97}, Lemma 1.1, we see that $\xi(\mathcal{I}_\rho)$ is dense in $\xi(\mathcal{I})\subset\mathcal{H}_\varphi$.
Therefore $\xi_p(\mathcal{I}_\rho)$ is dense in $L^p(\mathbb{G})$ for $1\leq p\leq 2$.

In \cite{KuVaes}, Proposition 8.11, it was showed that for $\omega\in \mathcal{I}$ and $\theta\in\mathcal{D}(\rho_{i/2})$, $\omega*\theta\in\mathcal{I}$ and
$$\xi(\omega*\theta)=U^*\lambda(\rho_{i/2}(\theta))^*U\xi(\omega),$$
where $U:\mathcal{H}_\varphi\to\mathcal{H}_\varphi$ is an anti-unitary such that $U\Gamma(x)=\Lambda_\varphi(R(x^*))$ for any $x\in\mathfrak{N}_\psi$, where $(\mathcal{H}_\varphi,\pi,\Gamma)$ is the GNS-construction for $\psi=\varphi_\delta$ constructed from $(\mathcal{H}_\varphi,\pi,\Lambda_\varphi)$ via $\delta$ (See Notation 7.13 in \cite{KuVaes}).

Inspired by the equation above, we are able to show the following proposition.

\begin{proposition}
Suppose $1\leq p,q\leq 2$ and $\frac{1}{p}+\frac{1}{q}=\frac{3}{2}$.
Let $p',q'$ be such that $\frac{1}{p}+\frac{1}{p'}=1$, $\frac{1}{q}+\frac{1}{q'}=1$, Suppose that $\omega,\theta\in \mathcal{I}$ and $\theta$ is analytic with respect to $\rho$.
Then we have $\xi(\omega*\rho_{-i/p'}(\theta))\in L^2(\mathbb{G})$ and
$$\|\xi(\omega*\rho_{-i/p'}(\theta))\|_2\leq \|\xi_p(\omega)\|_p\|\xi_q(\theta)\|_q.$$
\end{proposition}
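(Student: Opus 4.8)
The plan is to deduce this $r=2$ instance of Young's inequality by complex interpolation between two sharp endpoint estimates, treating $p$ and $q$ simultaneously. Write $\frac{1}{p'}=\frac{s}{2}$ with $s\in[0,1]$, so that $\frac1p=1-\frac s2$, $\frac1q=\frac{1+s}2$, and $\rho_{-i/p'}=\rho_{-is/2}$; the two endpoints are $s=0$, i.e. $(p,q)=(1,2)$, and $s=1$, i.e. $(p,q)=(2,1)$. At $s=0$ the desired bound $\|\xi(\omega*\theta)\|_2\le\|\omega\|_1\|\xi(\theta)\|_2$ is exactly inequality (\ref{eq21}). At $s=1$ I would use the Kustermans--Vaes formula recalled above: taking $\theta'=\rho_{-i/2}(\theta)$ gives $\rho_{i/2}(\theta')=\theta$, whence $\xi(\omega*\rho_{-i/2}(\theta))=U^*\lambda(\theta)^*U\xi(\omega)$. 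Since $U$ is anti-unitary and $\lambda(\theta)=(\theta\otimes\iota)(W)$ with $W$ unitary (so $\|\lambda(\theta)\|\le\|\theta\|_1$), this yields $\|\xi(\omega*\rho_{-i/2}(\theta))\|_2\le\|\theta\|_1\|\xi(\omega)\|_2$, the sharp $s=1$ endpoint.

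First, the membership $\xi(\omega*\rho_{-i/p'}(\theta))\in L^2(\mathbb{G})$ is immediate: since $\theta$ is analytic with respect to $\rho$, $\rho_{-i/p'}(\theta)$ is again analytic and lies in $\mathcal D(\rho_{i/2})$, so the quoted Proposition~8.11 of \cite{KuVaes} guarantees $\omega*\rho_{-i/p'}(\theta)\in\mathcal I$. Next I would set up the analytic family. For fixed $\omega,\theta\in\mathcal I_\rho$ the map $z\mapsto \rho_{-iz/2}(\theta)$ extends analytically to the strip $\{0\le\operatorname{Re}z\le1\}$, and through the explicit formula $\xi(\omega*\rho_{-iz/2}(\theta))=U^*\lambda(\rho_{i(1-z)/2}(\theta))^*U\xi(\omega)$ the $L^2$-valued map $z\mapsto \xi(\omega*\rho_{-iz/2}(\theta))$ is analytic and bounded on the strip.

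The core step is a bilinear three-lines argument matching the two endpoints. Using $\mathcal M_*\cap\mathcal H_\varphi=\mathcal I$ and the identifications $L^p(\mathbb G)=(L^1(\mathbb G),L^2(\mathbb G))_{[s]}$ and $L^q(\mathbb G)=(L^2(\mathbb G),L^1(\mathbb G))_{[s]}$, I would, after normalizing $\|\omega\|_p=\|\theta\|_q=1$ and fixing a test vector $\eta\in\xi(\mathcal I_\rho)$ with $\|\eta\|_2\le1$, choose boundary-normalized analytic families $f,g,h$ valued in $\mathcal I_\rho$ with $f(s)=\omega$, $g(s)=\theta$, $h(s)=\eta$, such that $\|f(it)\|_1,\|g(1+it)\|_1\le1$ and $\|f(1+it)\|_2,\|g(it)\|_2,\|h(it)\|_2,\|h(1+it)\|_2\le1$. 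Forming $G(z)=\langle \xi(f(z)*\rho_{-iz/2}(g(z))),h(z)\rangle$, the two endpoint estimates together with the fact that $\rho_t$ acts isometrically on $L^1(\mathbb G)$ and $L^2(\mathbb G)$ for real $t$ give $|G(it)|\le\|f(it)\|_1\|g(it)\|_2\le1$ on the left line and, via $\rho_{-i(1+it)/2}=\rho_{-i/2}\rho_{t/2}$ and the $s=1$ computation, $|G(1+it)|\le\|\lambda(\rho_{t/2}g(1+it))\|\,\|f(1+it)\|_2\le\|g(1+it)\|_1\|f(1+it)\|_2\le1$ on the right line. The Hadamard three-lines lemma then gives $|G(s)|\le1$, and taking the supremum over $\eta$ yields $\|\xi(\omega*\rho_{-i/p'}(\theta))\|_2\le\|\omega\|_p\|\theta\|_q=\|\xi_p(\omega)\|_p\|\xi_q(\theta)\|_q$.

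I expect the main obstacle to be the bilinear character of the interpolation combined with the $\rho$-twist: one must produce the analytic families $f,g$ inside the analytic core $\mathcal I_\rho$ (so that the convolution, the map $\xi$, and the action of $\rho_{-iz/2}$ all make literal sense along the whole strip and the Kustermans--Vaes formula applies on the boundary), and verify that $G$ is bounded and continuous on the closed strip and analytic inside. Realizing $\omega$ and $\theta$ as values at $z=s$ of such families with the prescribed boundary norms is cleanest in the Hilsum/spatial-derivative model $\Phi_p\colon[abd^{1/p}]\mapsto l^p(ab)$, where the families are built by replacing the fixed power $d^{1/p}$ by $d^{\zeta(z)}$ with $\zeta$ interpolating the exponents; the density of $\mathcal I_\rho$ and of $\xi(\mathcal I_\rho)$ then allows one to reduce to elements for which this construction is explicit.
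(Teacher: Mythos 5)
Your route is genuinely different from the paper's. The paper never interpolates at this stage: it sends everything to the dual side, writing $\xi(\omega*\rho_{-i/p'}(\theta))=\hat{\Lambda}(\lambda(\omega*\rho_{-i/p'}(\theta)))$, identifies the $L^2$-norm with the Hilsum norm $\|\lambda(\omega)\lambda(\rho_{-i/p'}(\theta))\hat{d}^{1/p'}\hat{d}^{1/q'}\|_2$, uses the commutation $\lambda(\rho_{-i/p'}(\theta))\hat{d}^{1/p'}=\hat{d}^{1/p'}\lambda(\theta)$ (Cooney, Theorem 2.4, together with Proposition 8.9 of Kustermans--Vaes) to absorb the twist, and then concludes by noncommutative H\"older ($\tfrac{1}{p'}+\tfrac{1}{q'}=\tfrac12$) followed by two applications of Cooney's Hausdorff--Young inequality. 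Your proposal instead runs the classical bilinear Stein/three-lines argument between the endpoints $L^1*L^2\to L^2$ (inequality (\ref{eq2})) and $L^2*\rho_{-i/2}(L^1)\to L^2$ (the Kustermans--Vaes formula $\xi(\omega*\rho_{-i/2}(\theta))=U^*\lambda(\theta)^*U\xi(\omega)$). Both endpoints are correctly identified and correctly estimated, and the twist $\rho_{-i/p'}$ appears exactly as the analytic-family parameter, so the strategy is coherent. What the paper's route buys is that all interpolation is outsourced to results already proved by Caspers and Cooney, so the proof is a short chain of identities; what your route buys is independence from the Hausdorff--Young inequality, at the price of having to execute a noncommutative Stein interpolation by hand.

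That price is where your write-up has a genuine gap. For the three-lines function $G(z)=\langle\xi(f(z)*\rho_{-iz/2}(g(z))),h(z)\rangle$ to be analytic on the strip, the family $g$ must take values in a set on which $z\mapsto\rho_{-iz/2}(g(z))$ makes sense and is analytic; admissible analytic families for the couple $(L^2(\mathbb{G}),L^1(\mathbb{G}))$ produced by the abstract interpolation method, or by your $d^{\zeta(z)}$ device in the Hilsum model, control $\sigma^\varphi$-analyticity (equivalently the boundary $L^1/L^2$ norms) but say nothing about $\rho$-analyticity, which is a condition on the dual side (via $\lambda(\rho_t(\omega))=\hat{\sigma}_t^{\hat{\varphi}}(\lambda(\omega))$) and is not implied by membership in $\mathcal{I}$. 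You need an extra regularization, e.g.\ replacing $g(z)$ by $\frac{n}{\sqrt{\pi}}\int\exp(-n^2t^2)\rho_t(g(z))\,dt$, together with the observations that $\rho_t$ is isometric on both $\mathcal{M}_*$ and on $\xi(\mathcal{I})\subset\mathcal{H}_\varphi$ (so the boundary norms do not increase) and that the smeared family still converges to $\theta$ at $z=s$ as $n\to\infty$; only then do the boundary identities you invoke (in particular the Kustermans--Vaes formula on the line $\mathrm{Re}\,z=1$) apply literally. Without this step the three-lines argument is not yet a proof. The endpoint estimates themselves, and the final duality/density step, are fine.
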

\begin{proof}
Note that $\omega*\rho_{-i/p'}(\theta)\in\mathcal{M}_*$. Since $\rho_{-i/p'}(\theta)\in \mathcal{I}$ (see Remark 8.12 in \cite{KuVaes}) and Result 8.6 in \cite{KuVaes}, we have $\omega*\rho_{-i/p'}(\theta)\in \mathcal{I}$. Hence
$$\xi(\omega*\rho_{-i/p'}(\theta))=\hat{\Lambda}(\lambda(\omega*\rho_{-i/p'}(\theta))).$$
Note that $\lambda(\omega*\rho_{-i/p'}(\theta))\in\mathfrak{N}_{\hat{\varphi}}$. By Theorem 23 in \cite{Terp82}, we have
$$\|\hat{\Lambda}(\lambda(\omega*\rho_{-i/p'}(\theta)))\|
=\|(\lambda(\omega*\rho_{-i/p'}(\theta))\hat{d}^{1/2}\|_{2,L^2(\hat{\phi})},$$
where $\hat{\phi}$ is a normal semi-finite faithful weight on $\widehat{M}'$ and $L^2(\hat{\phi})$ is the Hilsum space.

By the fact that $\lambda(\omega)\in\mathfrak{N}_{\hat{\varphi}}$ and the property of $\lambda$, we have
\begin{eqnarray*}
\|(\lambda(\omega*\rho_{-i/p'}(\theta))\hat{d}^{1/2}\|_{2,L^2(\hat{\phi})}
&=&\|\lambda(\omega)\lambda(\rho_{-i/p'}(\theta))\hat{d}^{1/p'}\hat{d}^{1/q'}\|_{2,L^2(\hat{\phi})}
\end{eqnarray*}
By Theorem 2.4 in \cite{Cooney} and Proposition 8.9 in \cite{KuVaes},
we have $\lambda(\rho_{-i/p'}(\theta))$ is analytic with respect to $\hat{\sigma}^{\hat{\varphi}}$ and
\begin{eqnarray*}
\|(\lambda(\omega*\rho_{-i/p'}(\theta))\hat{d}^{1/2}\|_{2,L^2(\hat{\phi})}&=&\|\lambda(\omega)\hat{d}^{1/p'}\lambda(\theta)\hat{d}^{1/q'}\|_{2,L^2(\hat{\phi})}\\
&\leq &\|\lambda(\omega)\hat{d}^{1/p'}\|_{p',L^{p'}(\hat{\phi})}\|\lambda(\theta)\hat{d}^{1/q'}\|_{q', L^{q'}(\hat{\phi})}.
\end{eqnarray*}
Now applying Hausdorff-Young inequality for locally compact quantum groups in \cite{Cooney}, we have
\begin{eqnarray*}
\|\xi(\omega*\rho_{-i/p'}(\theta))\|&\leq &\|\lambda(\omega)\hat{d}^{1/p'}\|_{p',L^{p'}(\hat{\phi})}\|\lambda(\theta)\hat{d}^{1/q'}\|_{q', L^{q'}(\hat{\phi})}\\
&\leq &\|\xi_p(\omega)\|_p\|\xi_q(\theta)\|_q.
\end{eqnarray*}
\end{proof}

Since $\xi_p(\mathcal{I})$ is dense in $L^p(\mathbb{G})$ for any $1\leq p\leq 2$, we then have
\begin{proposition}
Suppose $1\leq p,q\leq 2$ and $\frac{1}{p}+\frac{1}{q}=\frac{3}{2}$.
Let $p'$ be such that $\frac{1}{p}+\frac{1}{p'}=1$, $x\in L^p(\mathbb{G})$, $y\in L^q(\mathbb{G})$.
We define $x*\rho_{-i/p'}(y)$ to be the limit of $\xi(\omega_n*\rho_{-i/p'}(\theta_m))$ in $L^2(\mathbb{G})$, where $(\omega_n)_n\subset \mathcal{I}$ is a bounded net in $\mathcal{I}$ such that $(\xi_p(\omega_n))_n$ converges to $x$ in $L^p(\mathbb{G})$ and $(\theta_m)_m\subset \mathcal{I}$ is a bounded net such that $\theta_m$ is analytic with respect to $\rho$ and $(\xi_q(\theta_m))_m$ converges to $y$ in $L^q(\mathbb{G})$.
Then $x*\rho_{-i/p'}(y)\in L^2(\mathbb{G})$ and
$$\|x*\rho_{-i/p'}(y)\|_2\leq \|x\|_p\|y\|_q.$$
\end{proposition}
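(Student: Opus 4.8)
The plan is to treat the preceding proposition as a bounded bilinear estimate on a dense subset and then obtain the general statement by a standard bilinear density-and-continuity argument; all of the genuine analytic content (the inequality on smooth elements, via the Hausdorff--Young inequality and Hölder's inequality in the Hilsum spaces) has already been supplied there, so what remains is soft functional analysis.

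First I would package the previous proposition as a bounded bilinear map. The assignment $(\omega,\theta)\mapsto\xi(\omega*\rho_{-i/p'}(\theta))$ is bilinear on $\mathcal{I}\times\mathcal{I}_\rho$, where $\mathcal{I}_\rho$ denotes the $\rho$-analytic elements of $\mathcal{I}$, and the estimate $\|\xi(\omega*\rho_{-i/p'}(\theta))\|_2\le\|\xi_p(\omega)\|_p\|\xi_q(\theta)\|_q$ holds there. Since the inclusion maps $\xi_p$ and $\xi_q$ are injective, this descends to a well-defined bilinear map $B\colon \xi_p(\mathcal{I})\times\xi_q(\mathcal{I}_\rho)\to L^2(\mathbb{G})$ given by $B(\xi_p(\omega),\xi_q(\theta))=\xi(\omega*\rho_{-i/p'}(\theta))$ and satisfying $\|B(a,b)\|_2\le\|a\|_p\|b\|_q$. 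Note the asymmetry: analyticity with respect to $\rho$ is required only in the second variable, matching the hypotheses of the previous proposition.

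Second I would invoke density to extend $B$. We have already recorded that $\xi_p(\mathcal{I})$ is dense in $L^p(\mathbb{G})$ for $1\le p\le 2$, and likewise $\xi_q(\mathcal{I}_\rho)$ is dense in $L^q(\mathbb{G})$. A bounded bilinear map on a product of dense subspaces of Banach spaces extends uniquely to a bounded bilinear map $\bar B\colon L^p(\mathbb{G})\times L^q(\mathbb{G})\to L^2(\mathbb{G})$ with the same bound $\|\bar B\|\le 1$; I would justify this through the elementary difference estimate
$$\|B(a,b)-B(a',b')\|_2\le\|a-a'\|_p\,\|b\|_q+\|a'\|_p\,\|b-b'\|_q,$$
which shows $B$ is jointly uniformly continuous on norm-bounded sets and therefore extends by completeness of $L^2(\mathbb{G})$.

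Third I would identify the net limit in the statement with $\bar B(x,y)$. Given bounded nets $(\omega_n)\subset\mathcal{I}$ and $(\theta_m)\subset\mathcal{I}_\rho$ with $\xi_p(\omega_n)\to x$ and $\xi_q(\theta_m)\to y$, the same difference estimate applied to $B(\xi_p(\omega_n),\xi_q(\theta_m))=\xi(\omega_n*\rho_{-i/p'}(\theta_m))$ shows this double net is Cauchy in $L^2(\mathbb{G})$, hence convergent, and that its limit is $\bar B(x,y)$ independently of the approximating nets. Passing to the limit in
$$\|\xi(\omega_n*\rho_{-i/p'}(\theta_m))\|_2\le\|\xi_p(\omega_n)\|_p\,\|\xi_q(\theta_m)\|_q$$
then yields $\|x*\rho_{-i/p'}(y)\|_2\le\|x\|_p\|y\|_q$, with membership in $L^2(\mathbb{G})$ automatic. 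The only point requiring care is the well-definedness and net-independence of this double limit; this is exactly what the uniform bilinear bound buys, the boundedness hypothesis on the nets serving to control the cross terms $\|\xi_q(\theta_m)\|_q$ and $\|\xi_p(\omega_{n'})\|_p$ and thereby upgrade separate convergence in each variable to joint convergence.
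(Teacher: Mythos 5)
Your proposal is correct and follows essentially the same route as the paper: both take the bound $\|\xi(\omega_n*\rho_{-i/p'}(\theta_m))\|_2\le\|\xi_p(\omega_n)\|_p\|\xi_q(\theta_m)\|_q$ from the preceding proposition, use the boundedness of the approximating nets together with the bilinear difference estimate to conclude that $\{\xi(\omega_n*\rho_{-i/p'}(\theta_m))\}_{n,m}$ is Cauchy in $L^2(\mathbb{G})$, and pass to the limit. You merely spell out the joint-continuity step that the paper leaves implicit.
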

\begin{proof}
By the proposition above, we have
$$\|\xi(\omega_n*\rho_{-i/p'}(\theta_m))\|\leq \|\xi_p(\omega_n)\|_p\|\xi_q(\theta_m)\|_q.$$
By the assumption, we see that $\{\xi_p(\omega_n)\}_n$ and $\{\xi_q(\theta_m)\}_m$ are Cauchy nets, and hence $\{\xi(\omega_n*\rho_{-i/p'}(\theta_m))\}_{n,m}$ is a Cauchy net.
By taking the limits, we obtain that
$$\|x*\rho_{-i/p'}(y)\|_2\leq \|x\|_p\|y\|_q.$$
\end{proof}

Now by Stein's interpolation theorem \cite{Stein}, we have
\begin{theorem}\label{younglcqg}
Let $\mathbb{G}$ be a locally compact quantum group. For $1\leq p,q,r\leq 2$ with $\frac{1}{r}+1=\frac{1}{p}+\frac{1}{q}$, let $p'$ be such that $\frac{1}{p}+\frac{1}{p'}=1$.  Suppose $x\in L^p(\mathbb{G})$ and $y\in L^q(\mathbb{G})$. Then
$$\|x*\rho_{-i/p'}(y)\|_r\leq \|x\|_p\|y\|_q.$$
\end{theorem}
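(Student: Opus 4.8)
The plan is to derive the general inequality from the two special cases already proved by a bilinear complex-interpolation argument, invoking Stein's theorem because the operator $\rho_{-i/p'}$ depends on the exponent $p$. The two endpoints are at hand: the case $r=2$ is the proposition established just above, while the case $p=1$ (so that $p'=\infty$ and $\rho_{-i/p'}=\rho_0=\iota$, whence $r=q$) is precisely the proposition $\|x*y\|_q\le\|x\|_1\|y\|_q$. I would anchor the first endpoint at $(p_0,q_0)=(2,1)$, giving $\|x*\rho_{-i/2}(y)\|_2\le\|x\|_2\|y\|_1$, and let the second endpoint be $(1,q_1)$ with $q_1\in[1,2]$. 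Interpolating with parameter $\theta\in[0,1]$ produces the exponents
\[
\tfrac1{p_\theta}=\tfrac{1+\theta}{2},\qquad \tfrac1{q_\theta}=(1-\theta)+\tfrac{\theta}{q_1},\qquad \tfrac1{r_\theta}=\tfrac{1-\theta}{2}+\tfrac{\theta}{q_1},
\]
and a direct check shows that $\frac1{p_\theta}+\frac1{q_\theta}-\frac1{r_\theta}=1$ for all choices and that, as $\theta$ and $q_1$ vary, these triples sweep out exactly the admissible region $1\le p,q,r\le2$, $\frac1r+1=\frac1p+\frac1q$.

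The reason ordinary Riesz--Thorin interpolation is not enough is that $\rho_{-i/p_\theta'}$ varies with $\theta$; this is what Stein's theorem is for. The decisive simplification is that the relevant parameter is affine in $\theta$: since $p_1=1$ and $p_0'=2$ one has $\frac1{p_\theta'}=(1-\theta)\frac1{p_0'}=\frac{1-\theta}{2}$, so the natural analytic family is the entire function $z\mapsto\rho_{-i(1-z)/2}$, equal to $\rho_{-i/2}$ at $z=0$, to $\iota$ at $z=1$, and to $\rho_{-i/p_\theta'}$ at $z=\theta$. I would accordingly study the bilinear family $T_z(x,y)=x*\rho_{-i(1-z)/2}(y)$.

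By density it suffices to treat $x=\xi_p(\omega)$ and $y=\xi_q(\nu)$ with $\omega,\nu\in\mathcal I_\rho$, since then $\rho_{-i(1-z)/2}(\nu)$ is defined and analytic in $z$ and all the convolution and Fourier-transform identities used in the $r=2$ proposition remain available. Fixing such $x,y$ and a dual element $w$ for $L^{r_\theta}(\mathbb G)$, I would form the scalar function $F(z)=\langle T_z(x_z,y_z),w_z\rangle$, where $x_z,y_z,w_z$ are the standard analytic families representing $x,y,w$ in the interpolation scales built from $(\mathcal M,\mathcal H_\varphi,\mathcal M_*)$, normalised so that on the line $\mathrm{Re}\,z=j$ one has $\|x_z\|_{p_j}\le\|x\|_p$, $\|y_z\|_{q_j}\le\|y\|_q$ and $\|w_z\|_{r_j'}\le\|w\|_{r'}$ for $j=0,1$. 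On $\mathrm{Re}\,z=0$ the $r=2$ endpoint bounds $|F|$ by $\|x\|_p\|y\|_q\|w\|_{r'}$, and on $\mathrm{Re}\,z=1$ the estimate $\|\,\cdot\,\|_q\le\|\,\cdot\,\|_1\|\,\cdot\,\|_q$ gives the same bound; the three-lines lemma then yields $|F(\theta)|\le\|x\|_p\|y\|_q\|w\|_{r'}$, and taking the supremum over $w$ with $\|w\|_{r'}\le1$ produces the asserted inequality $\|x*\rho_{-i/p'}(y)\|_r\le\|x\|_p\|y\|_q$.

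The main obstacle is establishing that $z\mapsto F(z)$ is analytic in the open strip and bounded and continuous up to its boundary, the delicate point being the interaction of the unbounded operator $\rho_{-i(1-z)/2}$ (built from the scaling group $\tau$ and the modular element $\delta$) with the interpolation family $y_z$. One must verify that $\rho_{-i(1-z)/2}(y_z)$ is a well-defined element of the relevant $L^p$-scale that is analytic inside the strip and bounded on it; this is exactly why I would work over the dense class $\xi_p(\mathcal I_\rho)$ and control every quantity using Lemma~\ref{normchange} together with the explicit description of $\lambda(\rho_{-i/p'}(\nu))$ and of $\hat d$ that appeared in the proof of the $r=2$ proposition.
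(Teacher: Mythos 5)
Your proposal is essentially the paper's own argument: the paper simply cites Stein's interpolation theorem, with the two preceding propositions (the $L^p\times L^q\to L^2$ bound and the $L^1\times L^q\to L^q$ bound) as the endpoints, exactly as you set up. Your endpoint choices, the affine dependence of $1/p_\theta'$ on $\theta$ justifying the analytic family $z\mapsto \rho_{-i(1-z)/2}$, and the reduction to the dense class $\xi_p(\mathcal{I}_\rho)$ correctly fill in the details the paper leaves implicit.
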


\begin{remark}
In Theorem \ref{younglcqg}, $x*\rho_{-i/p'}(y)$ is similarly defined to be the limit of $\xi_r(\omega_n*\rho_{-i/p'}(\theta_m))$ in $L^2(\mathbb{G})$, where $(\omega_n)_n\subset \mathcal{I}$ is a bounded net in $\mathcal{I}$ such that $(\xi_p(\omega_n))_n$ converges to $x$ in $L^p(\mathbb{G})$ and $(\theta_m)_m\subset \mathcal{I}$ is a bounded net such that $\theta_m$ is analytic with respect to $\rho$ and $(\xi_q(\theta_m))_m$ converges to $y$ in $L^q(\mathbb{G})$.
\end{remark}

\begin{corollary}
Let $\mathbb{G}$ be a compact quantum group. Then for $1\leq p,q,r,p'\leq 2$ with $\frac{1}{r}+1=\frac{1}{p}+\frac{1}{q}$, $\frac{1}{p}+\frac{1}{p'}=1$, $x\in L^p(\mathbb{G})$ and $y\in L^q(\mathbb{G})$, we have
$$\|x*\tau_{i/p'}(y)\|_r\leq \|x\|_p\|y\|_q.$$
\end{corollary}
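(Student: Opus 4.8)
The plan is to specialize Theorem \ref{younglcqg} to the compact case, where the modular data degenerates so that the twist $\rho_{-i/p'}$ acting on functionals becomes an honest scaling $\tau_{i/p'}$ acting on $L^q$-elements.

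First I would record the simplifications forced by compactness. For a compact quantum group $\varphi=\psi$ is a state, and since $\psi=\varphi_\delta$ this forces the modular element $\delta=1$. Substituting $\delta=1$ into the defining formula $\rho_t(\omega)(x)=\omega(\delta^{-it}\tau_{-t}(x))$ collapses it to $\rho_t(\omega)(x)=\omega(\tau_{-t}(x))$, so that $\rho_t$ is exactly the pre-adjoint of the scaling automorphism $\tau_{-t}$. In particular $\rho_{-i/p'}(\omega)=\omega\circ\tau_{i/p'}$ on the relevant domain of analytic elements.

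Second I would translate this pre-adjoint action into a genuine action on the $L^q$-representatives. The essential input is that for compact type the scaling constant is $1$, so $\varphi\circ\tau_t=\varphi$ and $\tau_t$ preserves the Haar state. By Lemma \ref{normchange}, $\tau_t$ then acts isometrically on each $L^q(\mathbb{G})$, and its analytic continuation $\tau_{i/p'}$ is a closed, densely defined operator on $L^q(\mathbb{G})$ whose domain contains the dense subspace $\xi_q(\mathcal{I}_\rho)$. Using the invariance $\varphi\circ\tau_s=\varphi$ to move the scaling automorphism from the argument $x$ onto the density, I would verify on this dense subspace that $\xi_q(\rho_{-i/p'}(\theta))$ coincides with $\tau_{i/p'}(\xi_q(\theta))$; that is, the pre-adjoint twist on the functional $\theta$ is implemented by the scaling $\tau_{i/p'}$ on its $L^q$-representative. (Here one must track the sign of the analytic parameter carefully, since moving $\tau$ across $\varphi$ inverts it; the identity is established first on $\mathcal{I}_\rho$ and then extended by the isometry of Lemma \ref{normchange}.)

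Finally, with $x*\rho_{-i/p'}(y)=x*\tau_{i/p'}(y)$ under this identification, the estimate $\|x*\tau_{i/p'}(y)\|_r\le \|x\|_p\|y\|_q$ is just Theorem \ref{younglcqg} rewritten, and the corollary follows. The main obstacle is the second step: rigorously matching the functional-level twist $\rho_{-i/p'}$ with the element-level scaling $\tau_{i/p'}$ on $L^q$, including the correct handling of the unbounded analytic continuation and its domain, and confirming that the degeneracy $\delta=1$ together with the $\tau$-invariance of $\varphi$ yields precisely the parameter $i/p'$ appearing in the statement.
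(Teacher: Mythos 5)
Your proposal is correct and takes essentially the same route as the paper: the paper also uses that $\delta=1$ and that $\varphi\circ\tau_t=\varphi$ in the compact case to compute, on analytic elements, $\rho_{-i/p'}(y\varphi)=\tau_{i/p'}(y)\varphi$, and then invokes Theorem \ref{younglcqg}. The only cosmetic difference is that the paper performs this identification as a short direct computation for $y\in\mathcal{D}(\tau_{i/p'})$ and $z\in\mathcal{D}(\tau_{-i/p'})$ followed by a density argument, rather than phrasing it in terms of the closed operator $\tau_{i/p'}$ acting on $L^q(\mathbb{G})$.
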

\begin{proof}
Suppose that $x,y\in \mathcal{L}_\varphi=\mathcal{M}$. It suffices to compute $\rho_{-i/p'}(y\varphi)$.
We assume that $y\in\mathcal{D}(\tau_{i/p'})$ and $z\in\mathcal{D}(\tau_{-i/p'})$.
Then by Proposition 6.8 in \cite{KuVaes},
\begin{eqnarray*}
(\rho_{-i/p'}(y\varphi))(z)&=&(y\varphi)(\tau_{-i/p'}(z))\\
&=&\varphi\tau_{-i/p'}(z\tau_{i/p'}(y))=\varphi(z\tau_{i/p'}(y)).
\end{eqnarray*}
Since $\mathcal{D}(\tau_{-i/p'})$ is $\sigma$-strongly-* dense in $\mathcal{M}$, we have $\rho_{-i/p'}(y\varphi)=\tau_{i/p'}(y)\varphi$.
\end{proof}

\begin{proposition}
For $1\leq p,q,r\leq 2$, $x\in L^p(\mathbb{G})$ and $y\in L^q(\mathbb{G})$, we have
$$\hat{\Phi}_{r'}^{-1}\mathcal{F}_r(x*\rho_{-i/p'}(y))=\hat{\Phi}_{p'}^{-1}\mathcal{F}_p(x)\hat{\Phi}_{q'}^{-1}\mathcal{F}_q(y).$$
\end{proposition}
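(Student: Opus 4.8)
\section*{Proof proposal}

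The plan is to verify the identity first on the dense set of elements coming from $\mathcal{I}$, where every term can be transported into the Hilsum picture and the statement becomes a purely operator-theoretic computation, and then to extend to general $x,y$ by continuity. Concretely, I would take $\omega\in\mathcal{I}$ and $\theta\in\mathcal{I}_\rho$, so that $x=\xi_p(\omega)$ and $y=\xi_q(\theta)$ range over dense subsets of $L^p(\mathbb{G})$ and $L^q(\mathbb{G})$ respectively, and so that $\rho_{-i/p'}(\theta)\in\mathcal{I}$ and $\omega*\rho_{-i/p'}(\theta)\in\mathcal{I}$ by Remark 8.12 and Result 8.6 in \cite{KuVaes}. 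For such elements $x*\rho_{-i/p'}(y)=\xi_r(\omega*\rho_{-i/p'}(\theta))$, and all three terms in the asserted identity are then obtained by applying $\mathcal{F}_s$ to $\xi_s$ of a genuine element of $\mathcal{I}$ (for $s=p,q,r$).

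The first step is to transport each term into the Hilsum space $L^{s'}(\hat\phi)$. Using $\mathcal{F}_s(\xi_s(\omega))=\hat\Lambda_{s'}(\lambda(\omega))$ together with the description of the Hilsum isomorphism $\hat\Phi_{s'}$ (Terp's Theorem 23 in \cite{Terp82} in the case $s=2$, and the interpolation identifications of \cite{Cas} in general), I expect the identification
$$\hat\Phi_{s'}^{-1}\mathcal{F}_s(\xi_s(\omega))=[\lambda(\omega)\hat d^{1/s'}],\qquad s\in\{p,q,r\}.$$
Granting this, the left-hand side of the proposition equals $[\lambda(\omega*\rho_{-i/p'}(\theta))\hat d^{1/r'}]$, while the right-hand side is the Hilsum product $[\lambda(\omega)\hat d^{1/p'}]\,[\lambda(\theta)\hat d^{1/q'}]=[\lambda(\omega)\hat d^{1/p'}\lambda(\theta)\hat d^{1/q'}]$, the exponents being compatible because $\tfrac{1}{p'}+\tfrac{1}{q'}=\tfrac{1}{r'}$.

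It then remains to establish the operator identity
$$\lambda(\omega*\rho_{-i/p'}(\theta))\,\hat d^{1/r'}=\lambda(\omega)\,\hat d^{1/p'}\,\lambda(\theta)\,\hat d^{1/q'},$$
which is exactly the computation already carried out earlier in this section at the level of norms, now read as an equality of operators. One uses the homomorphism property $\lambda(\omega*\rho_{-i/p'}(\theta))=\lambda(\omega)\lambda(\rho_{-i/p'}(\theta))$, the factorization $\hat d^{1/r'}=\hat d^{1/p'}\hat d^{1/q'}$, and the commutation relation $\lambda(\rho_{-i/p'}(\theta))\,\hat d^{1/p'}=\hat d^{1/p'}\,\lambda(\theta)$ supplied by Theorem 2.4 in \cite{Cooney} and Proposition 8.9 in \cite{KuVaes}; the analyticity of $\lambda(\rho_{-i/p'}(\theta))$ with respect to $\hat\sigma^{\hat\varphi}$ is what makes this manipulation legitimate. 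This yields the proposition on the dense set.

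Finally I would pass to general $x\in L^p(\mathbb{G})$ and $y\in L^q(\mathbb{G})$ by continuity, along the nets used to define $x*\rho_{-i/p'}(y)$: choose bounded nets $(\omega_n)\subset\mathcal{I}$ and $(\theta_m)\subset\mathcal{I}_\rho$ with $\xi_p(\omega_n)\to x$ and $\xi_q(\theta_m)\to y$. The left-hand side converges since $\mathcal{F}_r$ is a contraction (Hausdorff-Young) and $\hat\Phi_{r'}$ is isometric, so it commutes with the limit defining the convolution; the right-hand side converges since $\mathcal{F}_p,\mathcal{F}_q$ are contractions and multiplication $L^{p'}(\hat\phi)\times L^{q'}(\hat\phi)\to L^{r'}(\hat\phi)$ is jointly continuous on bounded nets by H\"older's inequality for Hilsum spaces. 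I expect the main obstacle to be the identification of the first step: verifying $\hat\Phi_{s'}^{-1}\mathcal{F}_s(\xi_s(\omega))=[\lambda(\omega)\hat d^{1/s'}]$ for $s\neq 2$, since this requires tracking the Fourier transform, the inclusion $\hat\Lambda_{s'}$, and the Hilsum isomorphism simultaneously through the interpolation scale, rather than at the single Hilbert-space level where Terp's theorem applies directly.
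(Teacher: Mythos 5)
Your proposal follows essentially the same route as the paper: reduce to $\omega\in\mathcal{I}$ and $\theta\in\mathcal{I}_\rho$ by continuity of the $L^p$-Fourier transform, transport all three terms into the Hilsum picture, and conclude from the homomorphism property of $\lambda$ together with the commutation $\lambda(\rho_{-i/p'}(\theta))\,\hat d^{1/p'}=\hat d^{1/p'}\,\lambda(\theta)$. The identification $\hat\Phi_{s'}^{-1}\mathcal{F}_s(\xi_s(\omega))=[\lambda(\omega)\hat d^{1/s'}]$ that you flag as the main obstacle is precisely Theorem 3.1 of \cite{Cooney}, which is what the paper cites to close that step.
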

\begin{proof}
By continuity of $L^p$-Fourier transform, we only have to check
$$\hat{\Phi}_{r'}^{-1}\mathcal{F}_r(\xi_r(\omega*\rho_{-i/p'}(\theta)))=\hat{\Phi}_{p'}^{-1}\mathcal{F}_p(\xi_p(\omega))\hat{\Phi}_{q'}^{-1}\mathcal{F}_q(\xi_q(\theta))$$
for $\omega,\theta\in\mathcal{I}$ and $\theta$ analytic with respect to $\rho$.
Since $\omega*\rho_{-i/p'}(\theta)\in \mathcal{I}$, by Theorem 3.1 in \cite{Cooney}, we have
\begin{eqnarray*}
\hat{\Phi}_{r'}^{-1}\mathcal{F}_r(\xi_r(\omega*\rho_{-i/p'}(\theta)))&=&\lambda(\omega)\lambda(\rho_{-i/p'}(\theta))\hat{d}^{1/r'}\\
&=&\lambda(\omega)\hat{d}^{1/p'}\lambda(\theta)\hat{d}^{1/q'}\\
&=& \hat{\Phi}_{p'}^{-1}\mathcal{F}_p(\xi_p(\omega))\hat{\Phi}_{q'}^{-1}\mathcal{F}_q(\xi_q(\theta)),
\end{eqnarray*}
where the products are strong products.
\end{proof}

In general, we do not have Young's inequality for $1\leq p,q,r\leq \infty$. For example, let $\mathbb{G}=SU_\mu(2)$, we will show that $\|x*y\|_\infty\leq \|x\|_1\|y\|_\infty$ is not true for all $x\in L^1(\mathbb{G})$ and $y\in L^\infty(\mathbb{G})$.
Firstly we need the following proposition for convolutions.

\begin{proposition}
Let $\mathbb{G}$ be a compact quantum group.
Then the convolution $x*y$ of $x\in\mathcal{D}(S)$ and $y\in\mathcal{M}$ is given by
$$x*y=((x\varphi) S^{-1}\otimes \iota)\Delta(y).$$
The convolution $x*y$ of $x\in\mathcal{M}$ and $y\in\mathcal{D}(S^{-1})$ is given by
$$x*y=((\iota\otimes(y\varphi)S)\Delta(x)).$$
\end{proposition}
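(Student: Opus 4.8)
The plan is to reduce the statement to an identity between normal functionals and then invoke the strong invariance of the antipode. Since $\mathbb{G}$ is of compact type, $\varphi=\psi$ is a faithful state, so $\mathcal{L}_\varphi=\mathcal{M}$ and the map $\mathcal{M}\to\mathcal{M}_*$, $a\mapsto a\varphi$ (where $(a\varphi)(z)=\varphi(za)$) is an injective embedding of $\mathcal{M}=L^\infty(\mathbb{G})$ into $\mathcal{M}_*=L^1(\mathbb{G})$. Under this identification the convolution $x*y$ is, by definition, the element of $\mathcal{M}$ whose associated functional is $(x\varphi)*(y\varphi)$. By faithfulness of $\varphi$ the embedding $a\mapsto a\varphi$ is injective, so it suffices to show that the proposed element $w=((x\varphi)S^{-1}\otimes\iota)\Delta(y)$ satisfies $w\varphi=(x\varphi)*(y\varphi)$, i.e.
\begin{equation*}
\varphi(zw)=(x\varphi\otimes y\varphi)\Delta(z),\qquad z\in\mathcal{M}.
\end{equation*}

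First I would rewrite the functional $(x\varphi)S^{-1}$ in a more tractable form. Using that for a compact quantum group the scaling constant is trivial, so $\varphi\circ\tau_t=\varphi$ and $\varphi\circ R=\varphi$ (the latter because $\psi=\varphi R=\varphi$), one gets $\varphi\circ S^{-1}=\varphi$ on $\mathcal{D}(S^{-1})$, since $S=R\tau_{-i/2}$. Combining this with the fact that $S^{-1}$ is an anti-automorphism, for $x\in\mathcal{D}(S)$ one computes, on suitable $z$,
\begin{equation*}
(x\varphi)S^{-1}(z)=\varphi\big(S^{-1}(z)\,x\big)=\varphi\big(S^{-1}(S(x)z)\big)=\varphi\big(S(x)z\big)=(\varphi S(x))(z),
\end{equation*}
so that $(x\varphi)S^{-1}=\varphi S(x)$ is a bounded functional. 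This is precisely where the hypothesis $x\in\mathcal{D}(S)$ is needed, as it is what makes the composition with $S^{-1}$ bounded. Thus $w=(\varphi S(x)\otimes\iota)\Delta(y)$, and the target identity becomes an identity that I would verify on the canonical dense Hopf $*$-algebra $\mathcal{A}\subset\mathcal{M}$ spanned by matrix coefficients of irreducible corepresentations, where $\Delta$, $S$, $S^{-1}$ and $\varphi$ all restrict algebraically and $\mathcal{A}\subseteq\mathcal{D}(S)\cap\mathcal{D}(S^{-1})$.

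On $\mathcal{A}$ I would carry out the computation with Sweedler notation $\Delta(a)=\sum a_{(1)}\otimes a_{(2)}$, feeding in the strong left invariance relation that characterizes the antipode, namely $\sum b_{(1)}\varphi(ab_{(2)})=\sum S(a_{(1)})\varphi(a_{(2)}b)$, together with the counit and antipode axioms. Transforming $\varphi(zw)=\sum\varphi(S(x)y_{(1)})\varphi(zy_{(2)})$ by this relation should collapse the two slices of $\Delta(y)$ and reproduce $\sum\varphi(z_{(1)}x)\varphi(z_{(2)}y)=(x\varphi\otimes y\varphi)\Delta(z)$. Having established the equality on $\mathcal{A}$, I would extend it to all $x\in\mathcal{D}(S)$ and $y\in\mathcal{M}$ by density of $\mathcal{A}$ and norm continuity of the slice maps, the domain hypotheses guaranteeing that all functionals involved remain bounded. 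The second formula follows by the symmetric argument: for $y\in\mathcal{D}(S^{-1})$ one has $(y\varphi)S=\varphi S^{-1}(y)$, and the analogous strong right invariance identity yields $x*y=(\iota\otimes(y\varphi)S)\Delta(x)$; alternatively it can be deduced from the first formula by passing to the co-opposite structure via the unitary antipode $R$.

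The main obstacle is that for a general (non-Kac) compact quantum group $\varphi$ is not tracial and $S$ is unbounded, so the clean Hopf-algebraic manipulations acquire modular-automorphism corrections once one leaves $\mathcal{A}$. The delicate points are therefore: justifying the boundedness of $(x\varphi)S^{-1}$ from $x\in\mathcal{D}(S)$, controlling the domains throughout the strong-invariance step, and transferring the algebraic identity from the dense core $\mathcal{A}$ to the von Neumann algebraic level by a continuity argument that respects these domain constraints.
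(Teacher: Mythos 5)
Your strategy is sound and rests on the same two pillars as the paper's proof: the observation that $(x\varphi)S^{-1}$ is bounded because $(x\varphi)S^{-1}(z)=\varphi(S^{-1}(S(x)z))$ and $\varphi S^{-1}=\varphi$ (your identification $(x\varphi)S^{-1}=\varphi S(x)$ is exactly the paper's norm estimate $\|(x\varphi)S^{-1}(z)\|\le\|S(x)\|\,\|z\|$), and the strong left invariance of the antipode. The difference is purely in where the invariance is applied. The paper invokes it directly at the von Neumann level, in the operator form $(\iota\otimes\varphi)(\Delta(z)(1\otimes y))=S^{-1}\bigl((\iota\otimes\varphi)((1\otimes z)\Delta(y))\bigr)$ from Kustermans--Vaes, so the whole proof is a three-line slice-map computation valid for arbitrary $y,z\in\mathcal{M}$ with no approximation needed; the second formula is obtained by the mirror computation with $(\varphi\otimes\iota)$ and $(y\varphi)S$, just as you suggest. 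You instead verify the Sweedler-notation version of the same identity on the dense Hopf $*$-algebra $\mathcal{A}$ of matrix coefficients and then extend by density. That route works, but it shifts all the difficulty into the step you yourself flag as delicate: to pass from $\mathcal{A}$ to a general $x\in\mathcal{D}(S)$ you must approximate $x$ by elements $x_n\in\mathcal{A}$ with $S(x_n)\to S(x)$ (i.e.\ you need $\mathcal{A}$ to be a core for $S$ in the $\sigma$-strong-$*$ graph topology), and this requires a separate justification. The moral of the comparison is that the operator-level strong invariance already encapsulates the Hopf-algebraic cancellation together with all the domain bookkeeping, so quoting it directly lets you bypass the core-and-continuity argument entirely.
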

\begin{proof}
Since $\mathbb{G}$ is compact, $\mathcal{L}_\varphi=\mathcal{M}$.
If $x\in\mathcal{D}(S)$, $z\in\mathcal{D}(S^{-1})$
since $\|(x\varphi)S^{-1}(z)\|=\|\varphi S^{-1}(S(x)z)\|\leq \|S(x)\|\|z\|$,
we see that $(x\varphi)S^{-1}$ extends to a bounded linear functional on $\mathcal{M}$, denoted by $(x\varphi)S^{-1}$ again.
Then for any $z\in\mathcal{M}$,
\begin{eqnarray*}
((x\varphi)*(y\varphi))\Delta(z)&=&(x\varphi)((\iota\otimes\varphi)(\Delta(z)(1\otimes y)))\\
&=& (x\varphi)S^{-1}((\iota\otimes \varphi)(1\otimes z)\Delta(y))\\
&=&\varphi(z(((x\varphi)S^{-1}\otimes\iota)\Delta(y))).
\end{eqnarray*}
If $y\in \mathcal{D}(S^{-1})$, $z\in\mathcal{D}(S)$, we then have
$$\|(y\varphi) S(z)\|=\|\varphi S(zS^{-1}(y))\|\leq \|S^{-1}(y)\|\|z\|,$$
and $(y\varphi) S$ extends to a bounded linear functional on $\mathcal{M}$, denoted by $(y\varphi) S$ again.
Then for any $z\in\mathcal{M}$,
\begin{eqnarray*}
((x\varphi)*(y\varphi))\Delta(z)&=&(y\varphi)((\varphi\otimes\iota)(\Delta(z)(x\otimes 1)))\\
&=& (y\varphi)S((\varphi\otimes\iota)(z\otimes 1)\Delta(x))\\
&=&\varphi(z((\iota\otimes(y\varphi)S)\Delta(x))).
\end{eqnarray*}
\end{proof}

Now we consider $SU_\mu(2)$. For $\mu\in [-1,1]\backslash \{0\}$, $SU_\mu(2)$ is the universal unital C$^*$-algebra generated by $a,c$ subject to the conditions:
\begin{eqnarray*}
a^*a+c^*c=1,\quad aa^*+\mu^2c^*c=1,\\
c^*c=cc^*,\quad ac=\mu ca,\quad ac^*=\mu c^*a.
\end{eqnarray*}
Moreover $\|a^n\|=1$ for any $n\in\mathbb{N}$ and $(1-\mu^2)^{1/2}\leq\|c\|\leq 1$.

The comuliplication $\Delta$ on $SU_\mu(2)$ is given by
$$\Delta(a)=a\otimes a-\mu c^*\otimes c,\quad \Delta(c)=c\otimes a+a^*\otimes c.$$

The antipode $S$ on $SU_\mu(2)$ is given by
$$S(a)=a^*,\quad S(a^*)=a, S(c^*)=-\mu^{-1}c^*.$$

Let
\begin{eqnarray*}
a_{kmn}:=\left\{\begin{array}{ll}
a^k{c^*}^mc^n, & k\geq 0\\
{a^*}^{-k}{c^*}^mc^n, & k<0.
\end{array}\right.
\end{eqnarray*}

The Haar state $\varphi$ of $SU_\mu(2)$ is given by
$$\varphi(a_{kmn})=\delta_{k,0}\delta_{m,n}\frac{1-\mu^2}{1-\mu^{2m+2}}, \quad \mu\neq \pm 1.$$

Suppose $x={c^*}^{2n}$ and $y=c^{2n}$ for $n\in\mathbb{N}$. Then
$\Delta(y)=(c\otimes a+a^*\otimes c)^{2n}$ and
\begin{eqnarray*}
{c^*}^{2n}*c^{2n}&=&(({c^*}^{2n}\varphi)S^{-1}\otimes\iota)\Delta(c^{2n})\\
&=&(-\mu^{-1})^{2n}\varphi(c^{2n}{c^*}^{2n})a^{2n},
\end{eqnarray*}
and so $\|{c^*}^{2n}*c^{2n}\|=\mu^{-2n}\varphi({c^*}^{2n}*c^{2n})$.
On the other hand,
$\|{c^*}^{2n}\|_1=\varphi({c^*}^{n}c^{n})$
and
$(1-\mu^2)^{1/2}\leq \|c\|\leq 1$.
Hence
\begin{eqnarray*}
\frac{\|{c^*}^{2n}*c^{2n}\|}{\|{c^*}^{2n}\|_1\|\|c^{2n}\|}&\geq &\frac{\mu^{-2n}(1-\mu^2)(1-\mu^{2n+2})}{(1-\mu^2)(1-\mu^{4n+2})}=\frac{\mu^{-2n}(1-\mu^{2n+2})}{(1-\mu^{4n+2})}\\
&\geq &\mu^{-2n}(1-\mu^{2n+2}).
\end{eqnarray*}
Hence when $\mu\neq \pm1 $,
$$\sup_{0\neq x\in L^1(\mathbb{G}),0\neq y\in L^\infty(G)}\frac{\|x*y\|}{\|x\|_1\|y\|}=\infty.$$

When a locally compact quantum group $\mathbb{G}$ has trivial scaling automorphism group, we have Young's inequality for $1\leq p,q,r\leq \infty$ if $\varphi=\psi$ or $\varphi$ is tracial.

\begin{proposition}\label{Kac1}
Let $\mathbb{G}=(\mathcal{M},\Delta,\varphi,\psi)$ be a locally compact quantum group whose scaling automorphism group is trivial. Then the convolution $x*y$ of $x\in \mathcal{L}_\varphi$ and $y\in \mathcal{L}_\varphi$ is given by
$$x*y=((x\varphi) R\otimes \iota)\Delta(y)\in\mathcal{L}_\varphi.$$
\end{proposition}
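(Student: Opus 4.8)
The plan is to follow the template of the compact-type computation already carried out above, replacing the inverse antipode $S^{-1}$ by the unitary antipode $R$. The starting observation is that when the scaling automorphism group is trivial the polar decomposition $S=R\tau_{-i/2}$ of the antipode collapses to $S=R$, and since $R^2=\iota$ we also have $S^{-1}=R$. In particular $R$ is a \emph{bounded} normal $*$-anti-automorphism, so the functional $(x\varphi)R$, defined by $((x\varphi)R)(z)=(x\varphi)(R(z))$, automatically belongs to $\mathcal{M}_*$ with $\|(x\varphi)R\|=\|x\varphi\|$. This is precisely the step that, for the compact case treated above, required the separate closability-and-extension argument for the unbounded map $S^{-1}$; here it is free. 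Because $(x\varphi)R\in\mathcal{M}_*$ and $\Delta(y)\in\mathcal{M}\overline{\otimes}\mathcal{M}$, the right slice $w:=((x\varphi)R\otimes\iota)\Delta(y)$ is at once a well-defined element of $\mathcal{M}$.

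The core of the argument is to identify the convolution functional $(x\varphi)*(y\varphi)\in\mathcal{M}_*$ with $w\varphi$. For a test element $z$ ranging over a suitable $\sigma$-weakly dense subalgebra I would compute, exactly as in the compact case,
\begin{align*}
((x\varphi)*(y\varphi))(z) &= (x\varphi)\big[(\iota\otimes\varphi)(\Delta(z)(1\otimes y))\big] \\
&= ((x\varphi)S^{-1})\big[(\iota\otimes\varphi)((1\otimes z)\Delta(y))\big] \\
&= ((x\varphi)R)\big[(\iota\otimes\varphi)((1\otimes z)\Delta(y))\big] = \varphi(zw),
\end{align*}
where the second equality is the strong left invariance identity of Kustermans--Vaes, $(\iota\otimes\varphi)(\Delta(z)(1\otimes y))=S^{-1}((\iota\otimes\varphi)((1\otimes z)\Delta(y)))$, the third uses $S^{-1}=R$ (the orientation of the identity is immaterial here, since $S=S^{-1}=R$), and the last is the Fubini/slice manipulation already performed in the compact proof. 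As $\varphi(zw)=(w\varphi)(z)$, this exhibits $(x\varphi)*(y\varphi)=w\varphi$ on a dense set of $z$.

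Finally I would upgrade this pointwise identity to the membership $w\in\mathcal{L}_\varphi$ together with $x*y=w$. The functional $(x\varphi)*(y\varphi)$ is bounded and normal (with $\|(x\varphi)*(y\varphi)\|\le\|x\varphi\|\,\|y\varphi\|$), and it agrees with $z\mapsto\varphi(zw)$ on a dense subalgebra; hence $w\varphi$ extends to an element of $\mathcal{M}_*$. Since $w$ already lies in $\mathcal{M}$, this is exactly the assertion $w\in\mathcal{L}_\varphi$, and then $x*y=w=((x\varphi)R\otimes\iota)\Delta(y)$ under the identification $\mathcal{L}_\varphi\hookrightarrow\mathcal{M}_*$.

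The main obstacle is the domain bookkeeping forced by $\varphi$ being a weight rather than a state. In the non-compact setting the partial slices $(\iota\otimes\varphi)(\Delta(z)(1\otimes y))$ and $(\iota\otimes\varphi)((1\otimes z)\Delta(y))$ are defined only for $z,y$ in appropriate domains, and the strong invariance identity is valid only on analytic, respectively Tomita-algebra, elements. So the displayed computation must first be established for $z,y\in\mathcal{T}_\varphi$ (or another well-behaved $\sigma$-weakly dense $*$-subalgebra), and only afterwards extended to all $x,y\in\mathcal{L}_\varphi$ and all test $z$ by density of $\mathcal{T}_\varphi$ together with normality of every functional involved. Checking that each intermediate slice genuinely lands in $\mathcal{M}$, and that the limiting procedure is compatible with the identification of $\mathcal{L}_\varphi$-elements with their functionals $x\varphi$, is the only genuinely technical point.
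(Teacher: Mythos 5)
Your proposal is correct and follows essentially the same route as the paper: both exploit the collapse $S=R$ (so that $(x\varphi)R\in\mathcal{M}_*$ for free) together with the strong left invariance identity (Proposition 1.22 of Kustermans--Vaes, applied for $z$ in the dense set $\mathcal{R}_\varphi$) to rewrite $(x\varphi)*(y\varphi)$ as $z\mapsto\varphi\bigl(z\,((x\varphi)R\otimes\iota)\Delta(y)\bigr)$, and then conclude by density and the bound $\|x*y\|\leq\|x\varphi\|\,\|y\|$. The only point where the paper is more precise than your sketch is the final membership in $\mathcal{L}_\varphi$: rather than arguing directly that ``$w\varphi$ extends normally, hence $w\in\mathcal{L}_\varphi$,'' it first observes that $((x\varphi)R\otimes\iota)\Delta(y)\in\mathfrak{N}_\varphi$ because $y\in\mathcal{L}_\varphi\subset\mathfrak{N}_\varphi$ (Result 2.3 of Kustermans--Vaes) and then invokes Proposition 2.14 of Caspers, which is exactly the domain bookkeeping you flagged as the remaining technical point.
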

\begin{proof}
Note that $\tau_t$ is trivial and $S=R$. By Proposition 1.22 in \cite{KuVaes}, for any $z\in\mathcal{R}_\varphi$,
\begin{eqnarray*}
((x\varphi)*(y\varphi))\Delta(z)&=&(x\varphi)((\iota\otimes\varphi)(\Delta(z)(1\otimes y)))\\
&=& (x\varphi)R((\iota\otimes \varphi)(1\otimes z)\Delta(y))\\
&=& ((x\varphi)R\otimes (\varphi z))\Delta(y))\\
&=&\varphi(z(((x\varphi)R\otimes\iota)\Delta(y))).
\end{eqnarray*}
Note that $\mathcal{R}_\varphi$ is $\sigma$-strongly-* dense in $\mathcal{M}$ and $\|x*y\|\leq \|x\varphi \|\|y\|<\infty$.
We have $x*y=((x\varphi) R\otimes \iota)\Delta(y)\in\mathcal{M}.$
Since $y\in\mathcal{L}_\varphi\subset\mathfrak{N}_\varphi$, we have $((x\varphi) R\otimes\iota)\Delta(y)\in\mathfrak{N}_\varphi$ by Result 2.3 in \cite{KuVaes}.
Therefore $x*y=((x\varphi) R\otimes \iota)\Delta(y)\in\mathcal{L}_\varphi$ by Proposition 2.14 in \cite{Cas}.
\end{proof}

\begin{corollary}\label{young1}
Let $\mathbb{G}$ be a locally compact quantum group whose scaling automorphism group is trivial. Then for $x\in L^1(\mathbb{G})$, $y\in L^p(\mathbb{G})$, $1\leq p\leq \infty$,
$$\|x*y\|_p\leq \|x\|_1\|y\|_p.$$
\end{corollary}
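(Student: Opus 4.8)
The plan is to fix $x$ and interpolate the left-convolution operator $y\mapsto x*y$ between the endpoints $p=1$ and $p=\infty$, using that $L^p(\mathbb{G})=(\mathcal{M},\mathcal{M}_*)_{[1/p]}$ together with Proposition \ref{map}. Identify $x\in L^1(\mathbb{G})$ with a functional $\omega\in\mathcal{M}_*$, so that $\|\omega\|=\|x\|_1$ (and $\omega=x\varphi$ when $x\in\mathcal{L}_\varphi$), and define an operator $L_\omega$ on $\mathcal{M}+\mathcal{M}_*$ by the two prescriptions below.

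First I would treat the endpoint $p=\infty$. For $y\in\mathcal{M}=L^\infty(\mathbb{G})$, set $L_\omega(y)=(\omega R\otimes\iota)\Delta(y)$, which is meaningful for every $y\in\mathcal{M}$ precisely because the scaling automorphism group is trivial and hence $R$ is a bounded $*$-anti-automorphism. Since $R$ is a bijective isometry, $\|\omega R\|=\|\omega\|$; moreover $\Delta$ is a normal $*$-homomorphism and the slice map $(\nu\otimes\iota)(\cdot)$ is contractive in $\nu$. Combining these gives $\|L_\omega(y)\|_\infty\le\|\omega\|\,\|y\|_\infty$, so $L_\omega\colon L^\infty\to L^\infty$ has norm $\le\|x\|_1$.

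Next the endpoint $p=1$. For $\theta\in\mathcal{M}_*=L^1(\mathbb{G})$, set $L_\omega(\theta)=\omega*\theta$; inequality \eqref{eq11} gives at once $\|L_\omega(\theta)\|_1=\|\omega*\theta\|\le\|\omega\|\,\|\theta\|=\|x\|_1\|\theta\|_1$, so $L_\omega\colon L^1\to L^1$ also has norm $\le\|x\|_1$. The one point that requires care — and the main obstacle — is checking that these two prescriptions agree on the overlap of the couple, i.e. on $\mathcal{L}_\varphi$ embedded in $\mathcal{M}_*$ through $w\mapsto w\varphi$, so that $L_\omega$ is a genuine morphism of the compatible couple $(\mathcal{M},\mathcal{M}_*)$. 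This is exactly the computation underlying Proposition \ref{Kac1}: for $y\in\mathcal{L}_\varphi$ one has $\omega*(y\varphi)=\big((\omega R\otimes\iota)\Delta(y)\big)\varphi$, so the $L^1$-value $\omega*(y\varphi)$ is represented by the element $(\omega R\otimes\iota)\Delta(y)\in\mathcal{L}_\varphi$, which is the $L^\infty$-value $L_\omega(y)$. For general $\omega$ this extends from the case $\omega=x\varphi$ by norm density of $\{x\varphi:x\in\mathcal{L}_\varphi\}$ in $\mathcal{M}_*$ and the uniform endpoint bounds just proved. It is here that triviality of $\tau$, which makes $S=R$ bounded and collapses the strong-left-invariance identity of Kustermans--Vaes to the stated form, is indispensable; without it the $p=\infty$ endpoint fails, as the $SU_\mu(2)$ computation above shows.

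Finally, having exhibited $L_\omega$ as a morphism of $(\mathcal{M},\mathcal{M}_*)$ with both endpoint norms $\le\|x\|_1$, I would invoke Proposition \ref{map} with $\theta=1/p$ and $L^p(\mathbb{G})=(\mathcal{M},\mathcal{M}_*)_{[1/p]}$ to conclude that $L_\omega\colon L^p\to L^p$ is bounded with norm $\le\|x\|_1^{1-1/p}\,\|x\|_1^{1/p}=\|x\|_1$. Since $x*y=L_\omega(y)$ for $y\in\mathcal{L}_\varphi$ and $\mathcal{L}_\varphi$ is dense in $L^p(\mathbb{G})$, the bounded operator $L_\omega$ is the continuous extension that defines $x*y$ on all of $L^p(\mathbb{G})$, and the estimate $\|x*y\|_p\le\|x\|_1\|y\|_p$ follows.
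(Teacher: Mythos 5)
Your proposal is correct and follows essentially the same route as the paper: the $L^\infty$ endpoint bound from Proposition \ref{Kac1}, the $L^1$ endpoint bound from inequality \eqref{eq11}, and complex interpolation via Proposition \ref{map}. The compatibility check on the overlap of the couple $(\mathcal{M},\mathcal{M}_*)$ that you spell out is exactly the content of Proposition \ref{Kac1}, which the paper's terse proof implicitly relies on.
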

\begin{proof}
From Proposition \ref{Kac1}, we have $\|x*y\|\leq \|x\|_1\|y\|$. Recall $\|x*y\|_1\leq \|x\|_1\|y\|_1$ for $x,y\in L^1(\mathbb{G})$. Then by complex interpolation theorem we have
$$\|x*y\|_p\leq \|x\|_1\|y\|_p,$$
for all $x\in L^1(\mathbb{G})$ and $y\in L^p(\mathbb{G})$.
\end{proof}

Recall that $\delta^*_t(\omega)(x)=\omega(\delta^{it}x)$ for any $x\in\mathcal{M}$. By interchanging the role of $x,y$ in Proposition \ref{Kac1}, we have
\begin{proposition}
Let $\mathbb{G}$ be a locally compact quantum group whose scaling automorphism group is trivial. Suppose $\omega\in\mathcal{M}_*$ is analytic with respect to $\delta^*$. Then the convolution $x*\omega$ of $x\in\mathcal{L}_\varphi$ and $\omega$ is given by
$$x*\omega=(\iota\otimes \delta^*_{-i}(\omega)R)\Delta(x)\in\mathcal{M}.$$
If $y\in\mathcal{L}_\varphi$ and $y\varphi$ is analytic with respect to $\delta^*$, then
$$x*y=(\iota\otimes \delta^*_{-i}(y\varphi)R)\Delta(x)\in\mathcal{M}.$$
\end{proposition}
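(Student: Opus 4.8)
The plan is to run the computation of Proposition~\ref{Kac1} again, but to contract the \emph{first} tensor leg of $\Delta(z)$ against $x\varphi$ instead of contracting the second leg against $y\varphi$; this is exactly the ``interchange of roles'' advertised before the statement. For $z$ in the $\sigma$-strongly-$*$ dense set $\mathcal{R}_\varphi$ I would begin from
$$((x\varphi)*\omega)(z)=\big((x\varphi)\otimes\omega\big)\Delta(z)=\omega\big[(\varphi\otimes\iota)(\Delta(z)(x\otimes 1))\big],$$
the last equality being merely the definition of $x\varphi$ applied to the first leg. The aim is then to rewrite $(\varphi\otimes\iota)(\Delta(z)(x\otimes 1))$ so that the comultiplication falls on $x$ and $z$ is isolated, in the same way that in Proposition~\ref{Kac1} strong left invariance moved the comultiplication from $z$ onto $y$.

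The key step is the right-handed strong invariance identity. Since the scaling group is trivial we have $S=R$, and together with the fact that $R$ reverses the comultiplication this converts the first-leg contraction into a second-leg contraction on $\Delta(x)$. The one genuinely new feature, compared with Proposition~\ref{Kac1}, is that the first leg is controlled by \emph{right} invariance whereas our functionals and $L^p$-norms are all built from the \emph{left} Haar weight $\varphi$; because $\psi=\varphi_\delta$, passing between the two sides forces the modular element $\delta$ to appear. Tracking it, the contraction $\omega\circ R$ that one would naively expect is replaced by $\omega(\delta\,R(\cdot))$, which is precisely $\delta^*_{-i}(\omega)R$ in view of $\delta^*_t(\omega)(\cdot)=\omega(\delta^{it}\cdot)$ continued analytically to $t=-i$. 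This should yield
$$((x\varphi)*\omega)(z)=\varphi\big(z\,(\iota\otimes\delta^*_{-i}(\omega)R)\Delta(x)\big)$$
for all $z\in\mathcal{R}_\varphi$; since $\mathcal{R}_\varphi$ is $\sigma$-strongly-$*$ dense and $\|(\iota\otimes\delta^*_{-i}(\omega)R)\Delta(x)\|\le\|\delta^*_{-i}(\omega)\|\,\|x\|<\infty$, I would conclude $x*\omega=(\iota\otimes\delta^*_{-i}(\omega)R)\Delta(x)\in\mathcal{M}$.

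The hard part will be making the modular bookkeeping rigorous: $\delta$ is in general unbounded and affiliated with $\mathcal{M}$, so $\delta^*_{-i}(\omega)$ exists only as the value at $-i$ of the analytic continuation of $t\mapsto\delta^*_t(\omega)$, which is exactly why the hypothesis that $\omega$ be analytic with respect to $\delta^*$ is needed. I would therefore carry out all manipulations on the dense domain of analytic vectors, apply the strong invariance identity only on the intersection of the relevant domains (e.g.\ $z\in\mathcal{R}_\varphi$ and $x\in\mathcal{L}_\varphi\subset\mathfrak{N}_\varphi$), and extend by continuity. Finally, the second formula is an immediate specialization: taking $\omega=y\varphi$ with $y\in\mathcal{L}_\varphi$ and $y\varphi$ analytic with respect to $\delta^*$ gives $x*y=(\iota\otimes\delta^*_{-i}(y\varphi)R)\Delta(x)\in\mathcal{M}$, and equality of this with the expression $((x\varphi)R\otimes\iota)\Delta(y)$ from Proposition~\ref{Kac1} furnishes a useful consistency check on the placement of $\delta$ and $R$.
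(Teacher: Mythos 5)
Your proposal is correct and follows essentially the same route as the paper: both reduce to the right-handed strong invariance identity (with $S=R$ since $\tau$ is trivial), pass between $\varphi$ and $\psi=\varphi_\delta$ to import the modular element, and collect the resulting factors of $\delta$ into $\delta^*_{-i}(\omega)$ before extending by density. The paper implements the ``modular bookkeeping'' you flag as the hard part concretely, via the regularizing elements $e_n=\frac{n}{\sqrt{\pi}}\int\exp(-n^2t^2)\delta^{it}\,dt$ and the identity $\Delta(\delta)=\delta\otimes\delta$, exactly as you anticipate.
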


\begin{proof}
For any $z,x\in\mathcal{T}_\varphi^2$, $((x\varphi)*\omega)(z)=\varphi(((\iota\otimes\omega)\Delta(z))x).$

We define $e_n=\frac{n}{\sqrt{\pi}}\int \exp(-n^2t^2)\delta^{it}dt.$
Then $e_n$ is analytic with respect to $\sigma^\varphi$ which implies $\mathfrak{N}_\varphi e_n\subseteq \mathfrak{N}_\varphi$.

As in the proof of Proposition 8.11 in \cite{KuVaes}, we have
$$\delta^{-1/2}(\iota\otimes\omega)\Delta(e_nz)=(\iota\otimes\delta^*_{-i/2}(\omega))\Delta(\delta^{-1/2}e_nz)\in\mathfrak{N}_\psi^*.$$
Hence
\begin{eqnarray*}
\lefteqn{\varphi(((\iota\otimes\omega)\Delta(e_nz))xe_m)}\\
&=& \psi(\delta^{-1/2}((\iota\otimes\omega)\Delta(e_nz))xe_m\delta^{-1/2})\\
&=&\psi((\iota\otimes\delta^*_{-i/2}(\omega))\Delta(\delta^{-1/2}e_nz)xe_m\delta^{-1/2})\\
&=&\delta^*_{-i/2}(\omega)((\psi\otimes\iota)(\Delta(\delta^{-1/2}e_nz)(xe_m\delta^{-1/2}\otimes 1)))\\
&=&\delta^*_{-i/2}(\omega)R((\psi\otimes\iota)((\delta^{-1/2}e_nz\otimes 1)\Delta(xe_m\delta^{-1/2})))\\
&=&\delta^*_{-i/2}(\omega)R((\varphi\otimes\iota)((e_nz\otimes 1)\Delta(xe_m\delta^{-1/2})(\delta^{1/2}\otimes 1))).
\end{eqnarray*}
Now applying $\Delta(\delta)=\delta\otimes\delta$, we obtain
\begin{eqnarray*}
\lefteqn{\varphi(((\iota\otimes\omega)\Delta(e_nz))xe_m)}\\
&=&\delta^*_{-i/2}(\omega)R((\varphi\otimes\iota)((e_nz\otimes 1)\Delta(xe_m))\delta^{-1/2})\\
&=&\delta^*_{-i/2}(\omega)(\delta^{1/2}R((\varphi\otimes\iota)((e_nz\otimes 1)\Delta(xe_m)))\\
&=&\delta^*_{-i}(\omega)(R((\varphi\otimes\iota)((e_nz\otimes 1)\Delta(xe_m)))\\
&=&\varphi(e_nz (\iota\otimes\delta^*_{-i}(\omega) R)\Delta(xe_m)).
\end{eqnarray*}
Hence $x*\omega=(\iota\otimes\delta^*_{-i}(\omega)R)\Delta(x)$ for $x\in\mathcal{T}_\varphi^2$. Since $\mathcal{T}_\varphi^2$ is $\sigma$-strongly-* dense in $\mathcal{L}_\varphi$, we have
$x*\omega=(\iota\otimes\delta^*_{-i}(\omega)R)\Delta(x)$ for all $x\in\mathcal{L}_\varphi$.
\end{proof}

\begin{proposition}
Let $\mathbb{G}$ be a locally compact quantum group whose scaling automorphism group is trivial. Suppose $1\leq p,q\leq \infty $, $\frac{1}{p}+\frac{1}{q}=1$.
If $\varphi=\psi$, then for $x\in L^p(\mathbb{G})$ and $y\in L^q(\mathbb{G})$, we have
$$\|x*y\|_\infty \leq \|x\|_p\|y\|_q.$$
If  $\varphi$ is tracial, then for $x\in L^p(\mathbb{G})$ and $y\in L^q(\mathbb{G})$, we have
$$\|x*y\delta^{-1/q}\|_\infty \leq \|x\|_p\|y\|_q.$$
\end{proposition}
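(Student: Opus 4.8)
The plan is to derive the $r=\infty$ inequality by bilinear complex interpolation between the two endpoints $(p,q)=(1,\infty)$ and $(p,q)=(\infty,1)$, just as Theorem \ref{younglcqg} was assembled from its endpoints. Because $\frac1p+\frac1q=1$ forces $r=\infty$, the target space $L^\infty(\mathbb{G})=\mathcal{M}$ is fixed along the whole family, so only the two source spaces move: writing $L^p(\mathbb{G})=(\mathcal{M},\mathcal{M}_*)_{[1/p]}$, one identifies $(\mathcal{M}_*,\mathcal{M})_{[\theta]}=L^{p_\theta}(\mathbb{G})$ with $\frac{1}{p_\theta}=1-\theta$ and $(\mathcal{M},\mathcal{M}_*)_{[\theta]}=L^{q_\theta}(\mathbb{G})$ with $\frac{1}{q_\theta}=\theta$, whence $\frac{1}{p_\theta}+\frac{1}{q_\theta}=1$. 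Thus it suffices to establish the two endpoint estimates and interpolate.

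The endpoint $(p,q)=(1,\infty)$ requires no twist, since $\delta^{-1/q}=\delta^{0}=1$, and it is exactly Corollary \ref{young1} with $p=\infty$, namely $\|x*y\|_\infty\le\|x\|_1\|y\|_\infty$. For the endpoint $(p,q)=(\infty,1)$ I would use the convolution formula $x*\omega=(\iota\otimes\delta^*_{-i}(\omega)R)\Delta(x)$ from the preceding proposition. Since $(\iota\otimes\Omega)\colon\mathcal{M}\overline{\otimes}\mathcal{M}\to\mathcal{M}$ has norm $\|\Omega\|$ for every $\Omega\in\mathcal{M}_*$, and $\Delta$ is an isometric $*$-homomorphism, this yields $\|x*\omega\|_\infty\le\|\delta^*_{-i}(\omega)R\|\,\|x\|_\infty=\|\delta^*_{-i}(\omega)\|\,\|x\|_\infty$, where the last equality uses that $R$ is an isometric anti-automorphism. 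In the case $\varphi=\psi$ the modular element is $\delta=1$, so $\delta^*_{-i}$ is the identity, $\omega=y\varphi$, and the bound reads $\|x*y\|_\infty\le\|y\varphi\|\,\|x\|_\infty=\|y\|_1\|x\|_\infty$; plain bilinear interpolation then gives $\|x*y\|_\infty\le\|x\|_p\|y\|_q$ for all admissible $p,q$.

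In the tracial case the factor $\delta^{-1/q}$ must be transported along the interpolation, so I would run Stein's theorem for the analytic family $T_z(x,y)=x*(y\delta^{-z})$, $0\le\Re z\le1$, defined first on elements analytic with respect to $\delta^*$. The line $\Re z=0$ again gives $\|x*y\|_\infty\le\|x\|_1\|y\|_\infty$ by Corollary \ref{young1}. On the line $\Re z=1$ one sets $\omega=(y\delta^{-1})\varphi$ and computes, using traciality, $\delta^*_{-i}(\omega)(z)=\omega(\delta z)=\varphi(\delta z\,y\delta^{-1})=\varphi(zy)=(y\varphi)(z)$, so that $\|\delta^*_{-i}(\omega)\|=\|y\varphi\|=\|y\|_1$ and hence $\|x*(y\delta^{-1})\|_\infty\le\|x\|_\infty\|y\|_1$. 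Interpolating the family at $\theta=\frac1q$ then produces $\|x*(y\delta^{-1/q})\|_\infty\le\|x\|_p\|y\|_q$.

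I expect the genuine difficulty to lie not in the endpoint estimates but in the interpolation bookkeeping for the tracial case: because $\delta$ is unbounded, $z\mapsto x*(y\delta^{-z})$ is only literally meaningful on a dense set of analytic vectors, so I must check that it extends to an admissible analytic family with controlled growth on the strip in the sense of Stein's theorem, and then pass to all of $L^p(\mathbb{G})$ and $L^q(\mathbb{G})$ using the density of $\xi_p(\mathcal{I}_\rho)$ and closability of the relevant operators, as in the earlier propositions. One must also confirm that $y\delta^{-z}$ represents an element of $L^{q_{\Re z}}(\mathbb{G})$ with the correct norm along each vertical line, which is where the change-of-weight Lemma \ref{normchange} together with $\psi=\varphi_\delta$ enters.
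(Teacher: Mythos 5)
Your proposal is correct in outline but takes a genuinely different route from the paper's. The paper does not interpolate along the line $\frac1p+\frac1q=1$ at all: it fixes $p$ and argues by duality, writing $\|x*y\|_\infty=\sup_{z}\varphi(z(x*y))$ over $z\in\mathcal{R}_\varphi$ with $\|z^*\|_1=1$, and then uses the unitary antipode to transpose the convolution, $\varphi(z(x*y))=\varphi(R(y)(R(z)*x))$ (in the tracial case the same identity with the regularizers $e_n=\frac{n}{\sqrt\pi}\int e^{-n^2t^2}\delta^{it}\,dt$ inserted to absorb $\delta^{-1/q}$); H\"older's inequality together with Corollary \ref{young1} applied to $R(z)*x$ and Lemma \ref{normchange} then settle every $p$ at once, with no analytic family and with all the unboundedness of $\delta$ confined to one algebraic computation. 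You instead establish only the two endpoints $(p,q)=(1,\infty)$ and $(\infty,1)$ --- the first is Corollary \ref{young1}, the second comes from the formula $x*\omega=(\iota\otimes\delta^*_{-i}(\omega)R)\Delta(x)$, which the paper proves but does not actually invoke in its own proof of this proposition --- and then interpolate along the line. Your endpoint computations check out, including $\delta=1$ when $\varphi=\psi$ and $\delta^*_{-i}\bigl((y\delta^{-1})\varphi\bigr)=y\varphi$ under traciality, so the approach is sound; its cost is exactly the Stein-interpolation bookkeeping you flag for $T_z(x,y)=x*(y\delta^{-z})$: one must build admissible analytic families in the noncommutative $L^p$'s and verify the boundary norms $\|y\delta^{-it}\|_\infty=\|y\|_\infty$ and $\|\delta^*_{-i}((y\delta^{-1-it})\varphi)\|=\|y\|_1$, which traciality does deliver but which the paper's duality argument sidesteps entirely. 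In short: same essential inputs (the two convolution formulas and the $L^1*L^p\to L^p$ bound), assembled by interpolation in your version versus by an antipode/duality identity in the paper's.
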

\begin{proof}
Suppose that $\varphi=\psi$. Note that for any $x,y\in\mathcal{L}_\varphi$,
$$\|x*y\|_\infty=\sup_{\|z^*\|_1=1, z\in\mathcal{R}_\varphi}\varphi(z(x*y)).$$
Now we will calculate $\varphi(z(x*y))$.
By the condition $\varphi=\psi$, we see that $\varphi z=(R(z)\varphi)R$ and then
\begin{eqnarray*}
\varphi(z(x*y))&=&(x\varphi\otimes y\varphi)\Delta(z)\\
&=&(x\varphi)((\iota\otimes\varphi)(\Delta(z)(1\otimes y)))\\
&=&(x\varphi R)((\iota\otimes \varphi)(1\otimes z)\Delta(y))\\
&=&(x\varphi R\otimes \varphi z)\Delta(y)\\
&=&(x\varphi R\otimes R(z)\varphi R)\Delta(y)\\
&=&(R(z)\varphi\otimes x\varphi)\Delta(R(y))\\
&=&\varphi(R(y)(R(z)*x)).
\end{eqnarray*}
Moreover we assume that $x,y,z\in\mathcal{T}_\varphi^2$.
Let $\phi$ be a normal semi-finite faithful weight on $\mathcal{M}'$.
By Corollary \ref{young1} and Lemma \ref{normchange}, we have
\begin{eqnarray*}
|\varphi(z(x*y))|&=&|\varphi(R(y)(R(z)*x))|\\
&=&|\int (R(z)*x)d R(y)d\phi|\\
&\leq &\|R(z)*x\|_p\|R(y^*)\|_q\\
&\leq&\|R(z)\|_1\|x\|_p\|y\|_q\\
&=&\|z^*\|_1\|x\|_p\|y\|_q.
\end{eqnarray*}
Therefore $\|x*y\|_\infty\leq \|x\|_p\|y\|_q$.

Suppose that $\varphi$ is tracial. Let $e_n=\frac{n}{\sqrt{\pi}}\int\exp(-n^2t^2)\delta^{it}dt$.
Then we have
\begin{eqnarray*}
|\varphi(e_mz(x*ye_n\delta^{-1/q}))|&=&|(x\varphi R\otimes \varphi e_mz)(\Delta(ye_n\delta^{-1/q}))|\\
&=&|(x\varphi R\otimes R(z)e_m\delta\varphi R)(\Delta(ye_n\delta^{-1/q}))|\\
&=&|(R(z)e_m\delta\varphi\otimes x\varphi)\Delta(\delta^{1/q}e_nR(y))|\\
&=&|\varphi(\delta^{1/q}e_nR(y)(R(z)e_m\delta*x))|\\
&\leq &\|\delta^{1/q}e_nR(y)\|_q\|R(z)e_m\delta*x\|_p\\
&\leq &\|ye_n\|_q\|z^*e_m\|_1\|x\|_p.
\end{eqnarray*}
The last inequality follows from Corollary \ref{young1}.
Hence
$$\|x*y\delta^{-1/q}\|_\infty\leq \|x\|_p\|y\|_q.$$
\end{proof}

\begin{theorem}\label{youngkac}
Let $\mathbb{G}$ be a locally compact quantum group whose scaling automorphism group is trivial. Suppose $1\leq p,q,r\leq \infty $, $\frac{1}{p}+\frac{1}{q}=\frac{1}{r}$.
If $\varphi=\psi$, then for $x\in L^p(\mathbb{G})$ and $y\in L^q(\mathbb{G})$, we have
$$\|x*y\|_r \leq \|x\|_p\|y\|_q.$$
If $\varphi$ is tracial, then for $x\in L^p(\mathbb{G})$ and $y\in L^q(\mathbb{G})$, we have
$$\|x*y\delta^{-1/p'}\|_r \leq \|x\|_p\|y\|_q.$$
\end{theorem}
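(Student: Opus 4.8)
The plan is to deduce both assertions from the two endpoint inequalities already established, namely Corollary \ref{young1} (which controls convolution when the first variable lies in $L^1$) and the Proposition immediately preceding this theorem (which controls convolution into $L^\infty$), by interpolating in the \emph{first} variable with the second variable held fixed. Throughout I use that the spaces $L^p(\mathbb{G})$ form a complex interpolation scale, so that $(L^{p_0}(\mathbb{G}),L^{p_1}(\mathbb{G}))_{[\theta]}=L^{p_\theta}(\mathbb{G})$ with $\frac{1}{p_\theta}=\frac{1-\theta}{p_0}+\frac{\theta}{p_1}$; this is the identity underlying the definition of the $L^p$ scale in Section 2.

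Consider first the case $\varphi=\psi$, where the modular element $\delta$ is trivial and no twist appears. Fix $y\in L^q(\mathbb{G})$ and regard $T_y\colon x\mapsto x*y$ as a linear map. Corollary \ref{young1} gives $T_y\colon L^1(\mathbb{G})\to L^q(\mathbb{G})$ with $\|T_y\|\le\|y\|_q$, while the preceding Proposition, applied with first exponent $q'$ and second exponent $q$ (so that $\frac{1}{q'}+\frac1q=1$), gives $T_y\colon L^{q'}(\mathbb{G})\to L^\infty(\mathbb{G})$ with $\|T_y\|\le\|y\|_q$. Viewing $T_y$ as a morphism of the compatible couples $(L^1,L^{q'})$ and $(L^q,L^\infty)$ and applying Proposition \ref{map} at parameter $\theta$, I obtain $T_y\colon L^{p_\theta}(\mathbb{G})\to L^{r_\theta}(\mathbb{G})$ with $\|T_y\|\le\|y\|_q$, where $\frac{1}{p_\theta}=1-\frac{\theta}{q}$ and $\frac{1}{r_\theta}=\frac{1-\theta}{q}$. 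A direct computation shows that, as $\theta$ runs over $[0,1]$, the pair $(p_\theta,r_\theta)$ sweeps out exactly the admissible range for the given $q$, yielding $\|x*y\|_r\le\|x\|_p\|y\|_q$.

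For the tracial case the power $\delta^{-1/p'}$ in the conclusion depends on the very exponent $p$ that is being interpolated, so a single fixed operator will not do; instead I would invoke Stein's interpolation theorem \cite{Stein}, exactly as in the proof of Theorem \ref{younglcqg}. Fix $y$ in a dense subspace of analytic elements (for instance $y\in\mathcal{T}_\varphi^2$ with $y\varphi$ analytic with respect to $\delta^*$) and introduce the analytic family $T_z\colon x\mapsto x*y\,\delta^{-z/q}$ on the strip $0\le\Re z\le1$. On the line $\Re z=0$ the factor $\delta^{-it/q}$ is a unitary whose right action preserves the $L^q$ norm, so Corollary \ref{young1} gives $\|T_{it}x\|_q\le\|x\|_1\|y\delta^{-it/q}\|_q=\|x\|_1\|y\|_q$; on the line $\Re z=1$ one writes $y\delta^{-(1+it)/q}=(y\delta^{-it/q})\delta^{-1/q}$ and applies the tracial estimate of the preceding Proposition with exponents $q'$ and $q$ to obtain $\|T_{1+it}x\|_\infty\le\|x\|_{q'}\|y\|_q$. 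Evaluating the conclusion of Stein's theorem at the real point $z=\theta$ and using $\frac{\theta}{q}=\frac{1}{p_\theta'}$ produces $\|x*y\delta^{-1/p'}\|_r\le\|x\|_p\|y\|_q$, and the general statement follows by density.

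The main obstacle is the analytic-family hypothesis of Stein's theorem: I must check that $z\mapsto T_z x$ is bounded and continuous on the closed strip and analytic in its interior, which forces the preliminary reduction to elements analytic with respect to $\delta^*$ (the same regularization by $e_n=\frac{n}{\sqrt{\pi}}\int\exp(-n^2t^2)\delta^{it}\,dt$ used in the preceding Proposition) together with a uniform control of $z\mapsto y\delta^{-z/q}$ across the strip. Granting the standard fact that imaginary powers of $\delta$ act isometrically on each $L^q(\mathbb{G})$, the two boundary estimates are immediate from results already proved, so the entire difficulty is concentrated in the construction of the analytic family and the final density extension; the $\varphi=\psi$ case requires none of this and is pure complex interpolation.
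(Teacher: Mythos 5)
Your argument is correct and fills in exactly the interpolation that the paper's one-line proof (``directly from the interpolation theorem'') leaves implicit: for $\varphi=\psi$, complex interpolation of the fixed operator $x\mapsto x*y$ between the $L^1\to L^q$ endpoint of Corollary \ref{young1} and the $L^{q'}\to L^\infty$ endpoint of the preceding proposition via Proposition \ref{map}, and, in the tracial case, Stein interpolation of the analytic family $x\mapsto x*y\delta^{-z/q}$ because the twist $\delta^{-1/p'}$ varies with the interpolation parameter. Note only that your index bookkeeping silently --- and correctly --- replaces the relation $\frac{1}{p}+\frac{1}{q}=\frac{1}{r}$ printed in the statement by the Young relation $1+\frac{1}{r}=\frac{1}{p}+\frac{1}{q}$, which is the only one compatible with the two endpoints and with Theorem \ref{younglcqg}.
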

\begin{proof}
Directly from the interpolation theorem.
\end{proof}

\section{SHIFTS OF GROUP-LIKE PROJECTIONS}
Suppose that $\mathbb{G}=(\mathcal{M},\Delta,\varphi,\psi)$ is a locally compact quantum group.
A projection $h$ in $L^\infty(\mathbb{G})$ is a group-like projection if $\Delta(h)(1\otimes h)=h\otimes h$ and $h\neq 0$.

A projection $h$ in $L^\infty(\mathbb{G})\cap L^1(\mathbb{G})$ is a biprojection if $\mathcal{F}_1(h\varphi)$ is a multiple of a projection in $L^\infty(\hat{\mathbb{G}})$.

\begin{remark}
In \cite{LanDaele, Daele07}, the group-like projection is defined for *-algebraic quantum group.
In subfactor theory, the biprojection is defined for planar algebras etc.
\end{remark}

\begin{proposition}\label{glp}
Suppose $\mathbb{G}$ is a locally compact quantum group and $h\in \mathfrak{N}_\varphi$ is a group-like projection.  Then
\begin{itemize}
\item[(1)] $S(h)=h$, $R(h)=h$, and $\tau_t(h)=h$ for all $t\in\mathbb{R}$. Moreover the scaling constant $\nu=1.$
\item[(2)] $\Delta(h)(h\otimes 1)=h\otimes h.$
\item[(3)] $h=\sigma_t^\varphi(h)=\sigma_t^\psi(h)$ for all $t\in\mathbb{R}$.
\item[(4)] $h\varphi=h\psi$.
\end{itemize}
\end{proposition}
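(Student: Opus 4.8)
The plan is to obtain $(1)$ first by feeding the defining relation into the strong invariance characterization of the antipode, then to read off $R$, $\tau$ and $\nu$ from the polar decomposition $S=R\tau_{-i/2}$, and finally to treat the modular data $(3)$ and the weight identity $(4)$ via the commutation relations between $\Delta$ and the one-parameter groups. To begin, note that since $h$ is a projection in $\mathfrak N_\varphi$ we have $h=h^{*}\in\mathfrak M_\varphi$ and $0<\varphi(h)<\infty$ by faithfulness. Taking adjoints in $\Delta(h)(1\otimes h)=h\otimes h$ (using $\Delta(h)^{*}=\Delta(h^{*})=\Delta(h)$) gives the companion identity $(1\otimes h)\Delta(h)=h\otimes h$. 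I would then invoke strong left invariance (Kustermans--Vaes): for $a,b\in\mathfrak N_\varphi$ the slice $(\iota\otimes\varphi)(\Delta(a)^{*}(1\otimes b))$ lies in $\mathcal D(S)$ and $S\big((\iota\otimes\varphi)(\Delta(a)^{*}(1\otimes b))\big)=(\iota\otimes\varphi)((1\otimes a^{*})\Delta(b))$. Taking $a=b=h$, the left side is $S((\iota\otimes\varphi)(h\otimes h))=\varphi(h)\,S(h)$ and the right side is $(\iota\otimes\varphi)(h\otimes h)=\varphi(h)\,h$, so dividing by $\varphi(h)\neq0$ yields $S(h)=h$.

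Next I would extract $R$ and $\tau$. From $S=R\tau_{-i/2}$ one gets $R(h)=\tau_{-i/2}(h)$; conjugating $S(h)=h$ and using $h=h^{*}$ together with $\tau_{z}(x)^{*}=\tau_{\bar z}(x^{*})$ and $R(x)^{*}=R(x^{*})$ gives $R(h)=\tau_{i/2}(h)$ as well, hence $\tau_{i/2}(h)=\tau_{-i/2}(h)$. Composing (the analytic continuations satisfy $\tau_{i/2}\tau_{-i/2}=\iota$ on $\mathcal D(\tau_{-i/2})$, and $h\in\mathcal D(\tau_{\pm i/2})$) forces $\tau_{i}(h)=h$. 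Since each $\tau_t$ is an isometric automorphism, $z\mapsto\tau_z(h)$ is bounded analytic on $0\le\operatorname{Im}z\le1$ with matching boundary values $\tau_{t+i}(h)=\tau_t(\tau_i(h))=\tau_t(h)$; gluing along the period $i$ produces a bounded entire $\mathcal M$-valued function, constant by Liouville (applied to $\omega\circ\tau_\bullet(h)$ for each normal $\omega$), so $\tau_t(h)=h$ for all $t$ and therefore $R(h)=\tau_{-i/2}(h)=h$. Finally $\varphi\circ\tau_t=\nu^{-t}\varphi$ with $\tau_t(h)=h$ and $0<\varphi(h)<\infty$ gives $\nu^{-t}=1$ for all $t$, i.e. $\nu=1$. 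This finishes $(1)$.

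For $(2)$, let $\chi$ be the flip on $\mathcal M\overline{\otimes}\mathcal M$. Applying the anti-multiplicative map $R\otimes R$ to the defining relation and using $(R\otimes R)\Delta=\chi\Delta R$ turns $\Delta(h)(1\otimes h)=h\otimes h$ into $(1\otimes R(h))\chi\Delta(R(h))=R(h)\otimes R(h)$; applying $\chi$ and $R(h)=h$ gives $(h\otimes1)\Delta(h)=h\otimes h$, whose adjoint is exactly $\Delta(h)(h\otimes1)=h\otimes h$. For $(3)$ I would use $\Delta\circ\sigma_t^\varphi=(\tau_t\otimes\sigma_t^\varphi)\circ\Delta$: applying $\tau_t\otimes\sigma_t^\varphi$ to the two relations of $(1)$--$(2)$ and using $\tau_t(h)=h$ shows that $h_t:=\sigma_t^\varphi(h)$ satisfies $\Delta(h_t)(1\otimes h_t)=h\otimes h_t$ and $\Delta(h_t)(h\otimes1)=h\otimes h_t$. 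Slicing the second identity in the second leg with the counit $\varepsilon$ (using $(\iota\otimes\varepsilon)\Delta=\iota$, $\varepsilon\circ\sigma_t^\varphi=\varepsilon$, and $\varepsilon(h)=\varepsilon(h^{2})=\varepsilon(h)^{2}\in\{0,1\}$, the value $0$ excluded at $t=0$ since $h\neq0$) yields $h_t h=h$, that is $h\le\sigma_t^\varphi(h)$ for every $t$; applying $\sigma_t^\varphi$ to the inequality at $-t$ gives the reverse inequality, so $\sigma_t^\varphi(h)=h$. Then $\sigma_t^\psi(h)=R\sigma_{-t}^\varphi(R(h))=R\sigma_{-t}^\varphi(h)=R(h)=h$. (One technical point to verify here is that $h\in\mathfrak N_\varphi$ lies in the domain of $\varepsilon$, so that the counit slice is legitimate for this group-like projection.)

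For $(4)$, part $(3)$ for both weights together with $\sigma_t^\psi=\operatorname{Ad}(\delta^{it})\circ\sigma_t^\varphi$ gives $\delta^{it}h=h\delta^{it}$ for all $t$. Using $\psi=\varphi R$, $R(h)=h$ and the KMS relation with $\sigma_{-i}^\varphi(h)=h$, one computes $h\psi(y)=\psi(yh)=\varphi(hR(y))=\varphi(R(y)h)$, so that the claim $h\varphi=h\psi$ is equivalent to the $R$-invariance $\varphi(yh)=\varphi(R(y)h)$ of the functional $h\varphi$. I expect this to be the main obstacle: whereas $S(h)=h$ and the modular invariance $(3)$ come out of essentially formal manipulations with the invariance and commutation relations, the $R$-invariance of $h\varphi$ amounts to showing that $h$ \emph{absorbs} the modular element, $\delta^{it}h=h$, so that $\psi=\varphi_\delta$ and $\varphi$ agree on the corner cut out by $h$ via the Connes cocycle $[D\psi:D\varphi]_t=\nu^{it^{2}/2}\delta^{it}=\delta^{it}$; establishing this genuinely uses the group-likeness $\Delta(\delta^{it})=\delta^{it}\otimes\delta^{it}$ of $\delta$ rather than only formal invariance. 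The two secondary delicate points are the analytic-continuation/Liouville bootstrap turning $S(h)=h$ into $R(h)=h$ and $\tau_t(h)=h$ in step $(1)$, and the domain of the counit used in step $(3)$.
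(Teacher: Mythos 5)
Your treatment of (1) and (2) is correct and essentially the paper's: the authors also get $S(h)=h$ from strong left invariance applied to $\Delta(h)(1\otimes h)=h\otimes h$, and their ``another way'' of deducing $R(h)=h$ and $\tau_t(h)=h$ is exactly your $\tau_i(h)=h$ plus Gaussian-smoothing/Liouville bootstrap (their primary route instead goes through Proposition 5.5 of Kustermans--Vaes and the operator $G=IN^{1/2}$). The problems are in (3) and (4).

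For (3), your argument hinges on slicing with a counit $\varepsilon$ satisfying $(\iota\otimes\varepsilon)\Delta=\iota$ on $\mathcal{M}$. No such object is available for a general locally compact quantum group in the von Neumann algebraic setting: already for $\mathcal{M}=L^\infty(G)$ with $G$ non-discrete, evaluation at the identity is not normal, and on the reduced C$^*$-level a bounded counit exists only under co-amenability. So this is not a domain technicality to be checked but a tool that does not exist in the stated generality, and the step ``$h_t h=h$'' has no justification. The paper's proof of (3) replaces this by a genuinely different and more delicate argument: from $\Delta\tau_t=(\sigma_t^\varphi\otimes\sigma_{-t}^\psi)\Delta$ and $\tau_t(h)=h$ one gets that $\Delta(h)$ is fixed by $\sigma_i^\varphi\otimes\sigma_{-i}^\psi$; then, using smoothed elements $h_n=\tfrac{n}{\sqrt{\pi}}\int e^{-n^2t^2}\sigma_t^\psi(h)\,dt$ and approximate units $e_k^\psi$, they establish $(\iota\otimes h_n\psi)\Delta(h)=(\iota\otimes\psi h_n)\Delta(h)=R(h_n)\psi(h)$ and compare $(\varphi\otimes\psi h_n)((a\otimes1)\Delta(h))$ computed two ways to conclude $\sigma_i^\varphi(R(h_n))=R(h_n)$, whence $\sigma_t^\varphi(h)=h$ and, via $R\sigma_t^\psi=\sigma_{-t}^\varphi R$, also $\sigma_t^\psi(h)=h$.

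For (4) you do not actually give a proof: you reduce the claim to the $R$-invariance of $h\varphi$ and then assert that this ``amounts to'' $\delta^{it}h=h$, to be extracted from the Connes cocycle and $\Delta(\delta^{it})=\delta^{it}\otimes\delta^{it}$, without carrying it out. That reduction also aims at a strictly stronger statement than needed: $\delta^{it}h=h$ is the paper's Corollary 4.7, which is proved \emph{after} and \emph{from} Proposition 4.3(4) (via the dual group-like projection $\lambda(h\varphi)$), so routing (4) through it risks circularity. The paper's actual proof of (4) is a short direct computation: for $a\in\mathfrak{N}_\psi^*\cap\mathfrak{N}_\varphi^*$, right invariance gives $\psi(ah)\varphi(h)=(\psi\otimes h\varphi)\Delta(ah)$, the two group-like identities let one trade $(1\otimes h)$ for $(h\otimes1)$ inside $\Delta(a)\Delta(h)$, and left invariance then yields $\psi(h)\varphi(ah)$; since $\psi(h)=\varphi(R(h))=\varphi(h)$, one concludes $\psi(ah)=\varphi(ah)$ on a core, hence $h\psi=h\varphi$. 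You should repair (3) and (4) along these lines (or otherwise avoid the counit and the appeal to $\delta^{it}h=h$).
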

\begin{proof}
$(1)$ Note that $\varphi(h)=\varphi(h^*h)<\infty$, we have $h\in \mathfrak{N}_\varphi\cap\mathfrak{N}_\varphi^*$ and $R(h)\in\mathfrak{N}_\psi^*\cap\mathfrak{N}_\psi$. Applying $\iota\otimes\varphi$ to $\Delta(h)(1\otimes h)=h\otimes h$, we obtain $(\iota\otimes \varphi)(\Delta(h)(1\otimes h))=\varphi(h)h$.
Then $h$ is in $\mathcal{D}(S)$ and
\begin{eqnarray*}
\varphi(h)S(h)&=&S((\iota\otimes\varphi)(\Delta(h)(1\otimes h)))\\
&=&(\iota\otimes\varphi)((1\otimes h)\Delta(h))\\
&=&(\iota\otimes\varphi)(\Delta(h)(1\otimes h))=\varphi(h)h,
\end{eqnarray*}
i.e. $S(h)=h$.

By Proposition 5.5 in \cite{KuVaes}, we have that
$$(\psi\otimes\iota)(\Delta(R(h))(R(h)\otimes 1))G\subseteq G(\psi\otimes\iota)(\Delta(R(h))(R(h)\otimes 1)).$$
Applying the equation $\chi(R\otimes R)\Delta=\Delta R$, we see that
\begin{eqnarray*}
(\psi\otimes\iota)(\Delta(R(h))(R(h)\otimes 1))&=&(R\otimes \psi R)((1\otimes h)\Delta(h))\\
&=&(\varphi\otimes R)(h\otimes h)=\varphi(h)R(h).
\end{eqnarray*}
Hence $R(h)G\subseteq GR(h)$ and $R(h)G^*\subseteq G^*R(h)$. Since $G=IN^{1/2}$, we obtain that $R(h)N\subseteq NR(h)$ and $R(h)N^{it}=N^{it}R(h)$.
Since $\tau_t(x)=N^{-it}xN^{it}$, we see that $R\tau_t(h)=R(h)$ and $\tau_t(h)=h$. Hence $h$ is analytic with respect to $\tau$ and $\tau_{\pm i/2}(h)=h$. Finally $R(h)=h$.

There is another way to show $R(h)=h$ and $\tau_t(h)=h$.
By $S(h)=h$, we have $\tau_{-i}(h)=h$. Let $h_n=\frac{n}{\sqrt{\pi}}\int \exp(-n^2t^2)\tau_t(h)dt$.
Then $\tau_{-i}(h_n)=h_n$. This implies that $\tau_t(h_n)=h_n$ and $\tau_t(h)=h$.
Hence $h$ is analytic with respect to $\tau$ and $\tau_{\pm i/2}(h)=h$. Then we can obtain $R(h)=h$.

By Proposition 6.8 in \cite{KuVaes}, we have $\varphi(h)=\varphi(\tau_t(h))=\nu^{-t}\varphi(h)$ and $\nu^{-t}=1$ for any $t\in\mathbb{R}$.
This implies that $\nu=1$.

$(2)$
\begin{eqnarray*}
\Delta(h)(h\otimes 1)&=&\Delta(R(h))(R(h)\otimes 1)\\
&=& ((R\otimes R)\chi\Delta(h))(R(h)\otimes 1)\\
&=& \chi((R\otimes R)(\Delta(h))(1\otimes R(h)))\\
&=&\chi (R\otimes R)((1\otimes h)\Delta(h))\\
&=&\chi (R\otimes R) (h\otimes h)\\
&=&R(h)\otimes R(h)=h\otimes h
\end{eqnarray*}

$(3)$ By the relation $\Delta\tau_t=(\sigma_t^\varphi\otimes\sigma^\psi_{-t})\Delta$ in Proposition 6.8 in \cite{KuVaes}, we have for any $n\in\mathbb{N}$,
\begin{eqnarray*}
\Delta(h)&=&\Delta(\tau_t(h))\\
&=&\frac{n}{\sqrt{\pi}}\int\exp(-n^2t^2)\Delta(\tau_t(h))dt\\
&=&\frac{n}{\sqrt{\pi}}\int\exp(-n^2t^2)(\sigma_t^\varphi\otimes\sigma_{-t}^\psi)\Delta(h)dt,
\end{eqnarray*}
i.e. $\Delta(h)$ is analytic with respect to $\sigma^\varphi\otimes{(\sigma^\psi)}^{-1}$. Moreover $(\sigma_i^\varphi\otimes\sigma_{-i}^\psi)\Delta(h)=\Delta(h).$

Let $h_n=\frac{n}{\sqrt{\pi}}\int \exp(-n^2t^2)\sigma_t^\psi(h)dt$. Then $h_n$ is analytic with respect to $\sigma^\psi$.
Since $R(h)=h$ and $R\sigma_t^\psi=\sigma_{-t}^\varphi R$, we have $R(h_n)=\frac{n}{\sqrt{\pi}}\int \exp(-n^2t^2)\sigma_{-t}^\varphi(h)dt$, i.e. $R(h_n)$ is analytic with respect to $\sigma^\varphi$.

By Kaplansky density theorem, there is a net $\{f^\varphi_k\}_k$ of self-adjoint elements in the unit ball of $\mathfrak{N}_\varphi\cap\mathfrak{N}_\varphi^*$ such that $f_k^\varphi\to 1$ in the $\sigma$-strong-* topology.
We then define the net of elements $\{e_k^\varphi\}$ by
$$e_k^\varphi=\frac{1}{\sqrt{\pi}}\int \exp(-t^2)\sigma_t^\varphi(f_k^\varphi)dt.$$
It is known that $\sigma_z^\varphi(e_k^\varphi)\to 1$ in the $\sigma$-strong-* topology for $z\in\mathbb{C}$ and $\|\sigma_z^\varphi(e_k^\varphi)\|\leq \exp((\Im z)^2)$, where $\Im z$ is the image part of $z$.

Now
\begin{equation*}
\begin{aligned}
&(\iota\otimes\psi)((1\otimes e_k^\psi)\Delta(h)(1\otimes h_n))\\
&=(\iota\otimes\psi)\frac{n}{\sqrt{\pi}}\int \exp(-n^2t^2)(1\otimes e_k^\psi)\Delta(h)(1\otimes \sigma_{t}^\psi(h))dt\\
&=\frac{n}{\sqrt{\pi}}\int \exp(-n^2t^2)(\sigma_{-t}^\varphi\otimes\psi)(1\otimes \sigma_{-t}^\psi(e_k^\psi))(\sigma_t^\varphi\otimes\sigma_{-t}^\psi)(\Delta(h))(1\otimes h)dt\\
&=\frac{n}{\sqrt{\pi}}\int \exp(-n^2t^2)(\sigma_{-t}^\varphi\otimes\psi)(1\otimes \sigma_{-t}^\psi(e_k^\psi))(\Delta(\tau_t(h))(1\otimes h))dt\\
&=\frac{n}{\sqrt{\pi}}\int \exp(-n^2t^2)(\sigma_{-t}^\varphi(h)\otimes\psi(\sigma_{-t}^\psi(e_k^\psi)h)dt.
\end{aligned}
\end{equation*}
Since $e_k^\psi\to 1$ in $\sigma$-strong-* topology, we have
$$(\iota\otimes h_n\psi)(\Delta(h))=(\iota\otimes \psi h_n)(\Delta(h)).$$
Repeating the above calculation with $h_n$ and $e_k^\psi$ switched, we obtain that
$$(\iota\otimes h_n\psi)(\Delta(h))=R(h_n)\psi(h).$$

Then for any $a\in\mathcal{T}_\varphi$, we obtain
\begin{eqnarray*}
\lefteqn{(\varphi\otimes\psi)((a\otimes h_ne_{k}^\psi)(\sigma_i^{\varphi}\otimes\sigma_{-i}^\psi)(\Delta(h)(e_m^\varphi\otimes e_k^\psi)))}\\
&=&(\varphi\otimes\psi)((\sigma_{-i}^\varphi(a)\otimes \sigma^\psi_{i}(h_ne_{k}^\psi))\Delta(h)(e_m^\varphi\otimes e_k^\psi))\\
&=&(e_m^\varphi\varphi \sigma_{-i}^\varphi(a)\otimes\psi)((1\otimes \sigma^\psi_i(e_{k}^\psi) )\Delta(h)(1\otimes e_k^\psi h_n))\\
&=&((e_m^\varphi\varphi \sigma_{-i}^\varphi(a))\otimes (e_k^\psi h_n\psi \sigma_i^\psi(e_k^\psi)))(\Delta(h)).
\end{eqnarray*}
Let $e_m^\varphi\to 1$ and $e_k^\psi\to 1$ in $\sigma$-strong-* topology. We have

\begin{eqnarray*}
(\varphi\otimes \psi h_n)((a\otimes 1)\Delta(h))&=&(\varphi\otimes \psi h_n)((a\otimes 1)(\sigma^\varphi_i\otimes \sigma_{-i}^\psi)\Delta(h))\\
&=&(\varphi\otimes h_n\psi)((\sigma_{-i}^\varphi(a)\otimes 1)\Delta(h))\\
&=&\varphi(\sigma_{-i}^\varphi(a)R(h_n))\psi(h)
\end{eqnarray*}
and on the other hand,
\begin{eqnarray*}
(\varphi\otimes \psi h_n)((a\otimes 1)\Delta(h))=\varphi(aR(h_n))\psi(h).
\end{eqnarray*}
Now we see that $\varphi(a\sigma_i^\varphi(R(h_n)))=\varphi(aR(h_n))$ for any $a\in\mathcal{T}_\varphi$, and hence we obtain that $\sigma_i^\varphi (R(h_n)) =R(h_n)$.
Furthermore $\sigma_t^\varphi(R(h_n))=R(h_n)$ and $\sigma_t^\varphi(h)=h$ for $t\in\mathbb{R}$.
Applying $R\sigma_t^\psi=\sigma^\varphi_{-t} R$, we have $\sigma_t^\psi(h)=h$ for $t\in\mathbb{R}$.

$(4)$ Since $R(h)=h$, $h\in\mathfrak{N}_\psi\cap\mathfrak{N}_\varphi$. For any $a$ in $\mathfrak{N}_\psi^*\cap\mathfrak{N}_\varphi^*$, we have
\begin{eqnarray*}
\psi(ah)\varphi(h)&=&(\psi\otimes h\varphi)\Delta(ah)\\
&=&(\psi\otimes\varphi)(\Delta(a)\Delta(h)(1\otimes h))\\
&=&(\psi\otimes\varphi)(\Delta(a)\Delta(h)(h\otimes 1))\\
&=&(h\psi\otimes\varphi)\Delta(ah)\\
&=&\psi(h)\varphi(ah)=\varphi(h)\varphi(ah),
\end{eqnarray*}
i.e. $\psi(ah)=\varphi(ah)$.
By Proposition 1.14 in \cite{KuVaes99} and Proposition 6.8 in \cite{KuVaes}, $\mathfrak{N}_\varphi\cap\mathfrak{N}_\psi$ is a core for $\Lambda_\varphi$ and hence $h\psi=h\varphi$.
\end{proof}

\begin{remark}
Proposition \ref{glp} indicate that a locally compact quantum group $\mathbb{G}$ whose scaling constant $\nu\neq 1$ has no group-like projection in $L^1(\mathbb{G})\cap L^\infty(\mathbb{G})$.
A compact quantum group always has a group-like projection in $L^1(\mathbb{G})\cap L^\infty(\mathbb{G})$ as the identity is such a projection.
\end{remark}

\begin{proposition}\label{glpbi}
Suppose $\mathbb{G}$ is a locally compact quantum group and $h$ is a group-like projection in $\mathcal{L}_\varphi$, then $\varphi(h)^{-1}\mathcal{F}_1(h)$ is a group-like projection.
Moreover, $\varphi(h)\hat{\varphi}(\mathcal{R}(\mathcal{F}_1(h)))=1$, where $\mathcal{R}(\mathcal{F}_1(h))$ is the range projection of $\mathcal{F}_1(h)$.
\end{proposition}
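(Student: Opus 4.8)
The plan is to work throughout with $\hat{h}:=\mathcal{F}_1(h)=\lambda(h\varphi)\in\hat{\mathcal{M}}$. Since $h\in\mathcal{L}_\varphi\subseteq\mathfrak{N}_\varphi$ and $h=h^*$, the functional $h\varphi$ lies in $\mathcal{I}$ with $\xi(h\varphi)=\Lambda_\varphi(h)$; hence $\hat{h}\in\mathfrak{N}_{\hat\varphi}$ and $\hat\Lambda(\hat{h})=\Lambda_\varphi(h)$, so in particular $\hat h\neq 0$. First I would record that $\lambda$ is multiplicative for convolution, $\lambda(\omega)\lambda(\theta)=\lambda(\omega*\theta)$, which follows from $(\Delta\otimes\iota)(W)=W_{13}W_{23}$, and then compute the convolution square of $h\varphi$. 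Using $(h\otimes h)\Delta(x)=(h\otimes 1)\Delta(hx)$ (the adjoint of Proposition~\ref{glp}(2)), left invariance $(\iota\otimes\varphi)\Delta=\varphi(\cdot)1$, and the centrality $\varphi(hx)=\varphi(xh)$ from Proposition~\ref{glp}(3), one finds for suitable $x$
\[
((h\varphi)*(h\varphi))(x)=(\varphi\otimes\varphi)\big((h\otimes 1)\Delta(hx)\big)=\varphi(h)\,\varphi(hx)=\varphi(h)\,(h\varphi)(x).
\]
By density and normality $(h\varphi)*(h\varphi)=\varphi(h)\,h\varphi$, so that $\hat{h}^2=\lambda\big((h\varphi)*(h\varphi)\big)=\varphi(h)\hat{h}$.

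The projection property then comes for free, avoiding any modular computation. The identity $\hat{h}^2=\varphi(h)\hat{h}$ says that $e:=\varphi(h)^{-1}\hat{h}$ is a nonzero idempotent, while $\|\hat{h}\|=\|\lambda(h\varphi)\|\leq\|h\varphi\|=\varphi(h)$ (because $\|W\|=1$ and $h\varphi$ is a positive functional by Proposition~\ref{glp}(3)) gives $\|e\|\leq 1$. A nonzero idempotent of norm at most $1$ in a C$^*$-algebra is automatically self-adjoint, so $e$ is a projection and $\hat h=\hat h^*$. Thus $\varphi(h)^{-1}\mathcal{F}_1(h)=e$ is a nonzero projection, and it remains only to verify the coproduct relation.

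For the group-like relation I would not slice the pentagon directly but test it on the GNS level. Applying the defining relation of $W$ with $a=b=h$ and invoking Proposition~\ref{glp}(2) gives
\[
W^*\big(\Lambda_\varphi(h)\otimes\Lambda_\varphi(h)\big)=(\Lambda_\varphi\otimes\Lambda_\varphi)\big(\Delta(h)(h\otimes 1)\big)=\Lambda_\varphi(h)\otimes\Lambda_\varphi(h),
\]
so the symmetric vector $\Lambda_\varphi(h)\otimes\Lambda_\varphi(h)$ is fixed by $W$, hence, since $\hat W=\Sigma W^*\Sigma$, also by $\hat W^*$. Feeding this into the GNS description of $\hat W$ for $\hat{\mathbb{G}}$, namely $\hat W^*(\hat\Lambda(\hat a)\otimes\hat\Lambda(\hat b))=(\hat\Lambda\otimes\hat\Lambda)(\hat\Delta(\hat b)(\hat a\otimes 1))$, with $\hat a=\hat b=\hat h$ and $\hat\Lambda(\hat h)=\Lambda_\varphi(h)$, yields $(\hat\Lambda\otimes\hat\Lambda)(\hat\Delta(\hat h)(\hat h\otimes 1))=\Lambda_\varphi(h)\otimes\Lambda_\varphi(h)=(\hat\Lambda\otimes\hat\Lambda)(\hat h\otimes\hat h)$, and injectivity of $\hat\Lambda\otimes\hat\Lambda$ gives $\hat\Delta(\hat h)(\hat h\otimes 1)=\hat h\otimes\hat h$. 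Finally $\hat R(\hat h)=\lambda((h\varphi)R)=\lambda(h\psi)=\lambda(h\varphi)=\hat h$, using $R(h)=h$, $\varphi R=\psi$ and Proposition~\ref{glp}(4); running the argument of Proposition~\ref{glp}(2) inside $\hat{\mathbb{G}}$ with this $\hat R$-invariance upgrades the last identity to the defining form $\hat\Delta(\hat h)(1\otimes\hat h)=\hat h\otimes\hat h$, so $e=\varphi(h)^{-1}\mathcal{F}_1(h)$ is a group-like projection. I expect this reduction — converting the coproduct relation for $\hat h$ into the single fixed-vector statement $W^*(\Lambda_\varphi(h)\otimes\Lambda_\varphi(h))=\Lambda_\varphi(h)\otimes\Lambda_\varphi(h)$ — to be the main obstacle; a direct leg-by-leg slicing of $(\iota\otimes\hat\Delta)(W)=W_{12}W_{13}$ is far more delicate, since the multiplicative-unitary forms of the group-like relations are one-sided and do not let one simply absorb the extra copy of $W$.

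For the last assertion, since $e=\varphi(h)^{-1}\hat h$ is a projection, $\hat h=\varphi(h)e$ and its range projection is $\mathcal{R}(\mathcal{F}_1(h))=e$. Because $\hat h=\hat h^*$ and $\hat h^2=\varphi(h)\hat h$, we have $\hat h^*\hat h=\varphi(h)\hat h$, while $\hat\varphi(\hat h^*\hat h)=\|\hat\Lambda(\hat h)\|^2=\|\Lambda_\varphi(h)\|^2=\varphi(h^*h)=\varphi(h)$. Hence $\varphi(h)\,\hat\varphi(\hat h)=\varphi(h)$, so $\hat\varphi(\hat h)=1$, and therefore $\varphi(h)\,\hat\varphi(\mathcal{R}(\mathcal{F}_1(h)))=\varphi(h)\,\hat\varphi(e)=\hat\varphi(\hat h)=1$, as claimed.
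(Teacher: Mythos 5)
Your proposal is correct and follows the same overall architecture as the paper's proof: establish the convolution idempotency $(h\varphi)*(h\varphi)=\varphi(h)\,h\varphi$, get the coproduct relation by applying $\hat\Lambda\otimes\hat\Lambda$ and observing that $\Lambda_\varphi(h)\otimes\Lambda_\varphi(h)$ is fixed by the multiplicative unitary (via Proposition \ref{glp}(2)), pass from $\hat\Delta(\hat h)(\hat h\otimes 1)=\hat h\otimes\hat h$ to $\hat\Delta(\hat h)(1\otimes\hat h)=\hat h\otimes\hat h$ using $\hat R(\lambda(\omega))=\lambda(\omega R)$ and $h\varphi R=h\varphi$, and compute $\hat\varphi(\hat h^*\hat h)=\|\hat\Lambda(\hat h)\|^2=\varphi(h)$ for the normalization. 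The one genuinely different step is self-adjointness of $\mathcal{F}_1(h)$: the paper computes $\lambda(h\varphi)^*=\lambda((h\varphi)^*)$ explicitly, which requires first showing that $\overline{h\varphi}\,S$ extends to a bounded functional and then identifying $(h\varphi)^*$ with $h\psi=h\varphi$ via Proposition \ref{glp}(4); you instead invoke the C$^*$-algebraic fact that a nonzero idempotent of norm at most one is automatically a projection, using only $\|\lambda(h\varphi)\|\le\|h\varphi\|=\varphi(h)$. Your route is shorter and sidesteps the antipode-domain bookkeeping entirely, at the cost of being less explicit about the identity $(h\varphi)^*=h\varphi$ (which the paper reuses elsewhere, e.g.\ in Proposition \ref{bipar}). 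One minor point of care in your convolution computation: $hx$ need not be positive, so applying left invariance $(\iota\otimes\varphi)\Delta(hx)=\varphi(hx)1$ requires a polarization/extension argument beyond $\mathfrak{M}_\varphi^+$; the paper avoids this by routing the computation through the strong left invariance identity involving $S^{-1}$, but both arguments are at the same level of formal rigor.
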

\begin{proof}
Since there is a group-like projection $h$, by Proposition \ref{glp}, we see that $\nu=1$.
For any $b$ in $\mathcal{D}(S^{-1})$, by Proposition 6.8 in \cite{KuVaes} and Proposition \ref{glp},
we have that
$$|\varphi(S^{-1}(b)h)|=|\varphi(S^{-1}(hb))|=|\varphi (R(hb))|=|h\varphi R(b)|\leq \|h\varphi\|\|b\|$$
which implies that $h\varphi S^{-1}$ extends a bounded linear functional $h\varphi R$ on $\mathcal{M}$.

For any $a$ in $\mathcal{R}_\varphi$, we have
\begin{eqnarray*}
(h\varphi*h\varphi)(a)&=&(h\varphi \otimes h\varphi)\Delta(a)\\
&=&h\varphi S^{-1}((\iota\otimes\varphi)((1\otimes a)\Delta(h)))\\
&=&(h\varphi S^{-1}\otimes \varphi a)\Delta(h)\\
&=&\varphi a(( h\varphi R\otimes \iota)\Delta(h))\\
&=&\varphi a R((\iota\otimes h\varphi)\Delta(h))\\
&=&\varphi a R(\varphi(h)h)=\varphi(ah)\varphi(h),
\end{eqnarray*}
i.e. $h*h=\varphi(h)h$.
By taking the Fourier transform $\mathcal{F}_1$, we obtain $\lambda(h\varphi)^2=\varphi(h)\lambda(h\varphi)$.
For any $b$ in $\mathcal{D}(S)$, we have that
$$|\overline{h\varphi}(S(b))|=|\varphi(S(hb))|=|h\varphi R(b)|\leq \|h\varphi\|\|b\|.$$
This implies that $\overline{h\varphi}S$ extends to a bounded linear functional on $\mathcal{M}$. Hence $(h\varphi)^*\in\mathcal{M}_*$. By Proposition 2.4 in \cite{KuVaes03} and Proposition \ref{glp}, we have that
\begin{eqnarray*}
(\lambda(h\varphi))^*&=&\lambda((h\varphi)^*)=\lambda(\overline{h\varphi} S)\\
&=&\lambda(\overline{h\varphi} R)=\lambda(h\psi)=\lambda(h\varphi).
\end{eqnarray*}
Therefore $\varphi(h)^{-1}\lambda(h\varphi)$ is a projection in $L^\infty(\hat{\mathbb{G}})$.

Now by a routine computation, we obtain
\begin{eqnarray*}
\varphi(h)\hat{\varphi}(\mathcal{R}(\mathcal{F}_1(h)))&=&\hat{\varphi}(\mathcal{F}_1(h))\\
&=&\varphi(h)^{-1}\hat{\varphi}(\lambda(h\varphi)^*\lambda(h\varphi))\\
&=&\varphi(h)^{-1}\varphi(h^*h)=1.
\end{eqnarray*}

To see $\varphi(h)^{-1}\mathcal{F}_1(h)$ is a group-like projection, we have to check
$$\hat{\Delta}(\lambda(h\varphi))(1\otimes \lambda(h\varphi))=\lambda(h\varphi)\otimes \lambda(h\varphi).$$
Applying $\hat{\Lambda}\otimes\hat{\Lambda}$, we se that
\begin{eqnarray*}
(\hat{\Lambda}\otimes\hat{\Lambda})(\hat{\Delta}(\lambda(h\varphi))(\lambda(h\varphi)\otimes 1))&=&\hat{W}^*(\hat{\Lambda}(\lambda(h\varphi))\otimes \hat{\Lambda}(\lambda(h\varphi)) )\\
&=&\Sigma W\Sigma(\Lambda(h)\otimes \Lambda(h))\\
&=&\Sigma W ((\Lambda\otimes\Lambda)(h\otimes h))\\
&=& \Sigma W(\Lambda\otimes\Lambda)(\Delta(h)(1\otimes h))\\
&=&\Sigma ((\Lambda\otimes\Lambda)(h\otimes h))\\
&=&(\Lambda\otimes\Lambda)(h\otimes h)\\
&=&\hat{\Lambda}(\lambda(h\varphi))\otimes \hat{\Lambda}(\lambda(h\varphi)).
\end{eqnarray*}
Hence $\hat{\Delta}(\lambda(h\varphi))(\lambda(h\varphi)\otimes 1)=\lambda(h\varphi)\otimes \lambda(h\varphi)$.
By the equation $\hat{R}(\lambda(\omega))=\lambda(\omega R)$ in Proposition 8.17 of \cite{KuVaes} and $h\varphi R=h\varphi$, we have that
$$\hat{\Delta}(\lambda(h\varphi))(1\otimes \lambda(h\varphi))=\lambda(h\varphi)\otimes \lambda(h\varphi).$$
Hence $\varphi(h)^{-1}\mathcal{F}_1(h)$ is a group-like projection in $L^\infty(\hat{\mathbb{G}})$.
\end{proof}

\begin{remark}\label{glpbir}
In Proposition \ref{glpbi}, we see that $\mathcal{R}(\mathcal{F}_1(h))=\varphi(h)^{-1}\mathcal{F}_1(h)$ is a group-like projection in $\mathcal{L}_{\hat{\varphi}}$.
Now we can apply the Fourier transform $\hat{\mathcal{F}}_1$ to obtain that $\varphi(h)^{-1}h=\hat{\mathcal{F}}_1(\mathcal{R}(\mathcal{F}_1(h)))$.
\end{remark}

\begin{corollary}\label{mo}
Let $\mathbb{G}$ be a locally compact quantum group. Suppose that $h$ is a group-like projection in $\mathcal{L}_\varphi$.
Then $\delta^{it} h=h\delta^{it}=h$ for all $t\in\mathbb{R}$.
\end{corollary}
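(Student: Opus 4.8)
The plan is to show that the range projection of $h$ lies in the fixed-point set of the one-parameter group $\delta^{it}$, by exploiting that $h$ sits in the centralizers of both Haar weights while $\delta^{it}$ is, up to the scaling constant, the Connes cocycle $[D\psi:D\varphi]_t$.

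First I would collect what Proposition \ref{glp} already provides. By part (3), $\sigma^\varphi_t(h)=\sigma^\psi_t(h)=h$ for all $t$, so $h$ is a projection in the centralizer of $\varphi$ and of $\psi$ simultaneously; by part (1) the scaling constant is $\nu=1$. Since $\psi=\varphi_\delta$, the Connes cocycle derivative is $[D\psi:D\varphi]_t=\nu^{it^2/2}\delta^{it}=\delta^{it}$, and the cocycle intertwines the modular groups, $\sigma^\psi_t(x)=\delta^{it}\sigma^\varphi_t(x)\delta^{-it}$ (see \cite{KuVaes}). Evaluating this at $x=h$ and using $\sigma^\psi_t(h)=\sigma^\varphi_t(h)=h$ yields $\delta^{it}h\delta^{-it}=h$, that is, $\delta^{it}h=h\delta^{it}$. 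This settles the commutation half of the statement and exhibits $h$ as a common centralizer projection.

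Next I would pass to the corner $h\mathcal{M}h$. From Proposition \ref{glp}(4) we have $h\varphi=h\psi$; testing against $x=hy$ gives $\varphi(hyh)=\psi(hyh)$ for all $y$, so the reduced weights $\varphi_h:=\varphi|_{h\mathcal{M}h}$ and $\psi_h:=\psi|_{h\mathcal{M}h}$ coincide. Because $h$ is a centralizer projection for both weights, the Connes cocycle restricts to the corner as $[D\psi_h:D\varphi_h]_t=h\,[D\psi:D\varphi]_t\,h=h\delta^{it}h=\delta^{it}h$, the last equality using the commutation just proved. As $\varphi_h=\psi_h$, the restricted cocycle is the unit $h$ of the corner, whence $\delta^{it}h=h$; combined with the commutation, $h\delta^{it}=h$ as well.

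The main obstacle is this last step: justifying that the Connes cocycle genuinely restricts to the corner as $h\,[D\psi:D\varphi]_t\,h$ and that equality of the reduced weights forces this restriction to be trivial. This is standard modular theory for a projection in a common centralizer, but it must be cited with care. If one prefers a route avoiding the restriction lemma, one can instead start from $\psi(xh)=\varphi(xh)$, write $\psi=\varphi_\delta$, move $\delta^{1/2}$ across using the KMS condition for $\varphi$ together with $\sigma^\varphi_z(\delta^{it})=\delta^{it}$ (valid since $\nu=1$), and arrive at $\varphi(xh\delta)=\varphi(xh)$ for all $x$; faithfulness of $\varphi$ then gives $h\delta=h$, equivalently $\delta^{it}h=h$. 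In that approach the delicate point is handling the unbounded $\delta$, which I would regularize through the mollifiers $e_n=\tfrac{n}{\sqrt{\pi}}\int\exp(-n^2t^2)\delta^{it}\,dt$ already used in the earlier proofs.
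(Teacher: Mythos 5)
Your argument is correct, but it is genuinely different from the one in the paper. The paper proves this corollary by passing to the dual: it invokes Proposition \ref{glpbi} to see that $\lambda(h\varphi)$ is again a group-like projection, applies Proposition \ref{glp}(3) on the dual side to get $\hat{\sigma}_t^{\hat{\varphi}}(\lambda(h\varphi))=\lambda(h\varphi)$, translates this back via $\hat{\sigma}_t^{\hat{\varphi}}\circ\lambda=\lambda\circ\rho_t$ into $\rho_t(h\varphi)=h\varphi$, and then unwinds the definition $\rho_t(\omega)(x)=\omega(\delta^{-it}\tau_{-t}(x))$ together with $\tau_t(h)=h$ and $\nu=1$ to obtain $\varphi(bh)=\varphi(bh\delta^{-it})$, hence $h\delta^{-it}=h$. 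Your proof stays entirely on the primal side: you use only Proposition \ref{glp} (parts (1), (3), (4)) plus standard Tomita--Takesaki theory --- the identification $[D\psi:D\varphi]_t=\nu^{it^2/2}\delta^{it}=\delta^{it}$, the cocycle intertwining relation to get commutation of $\delta^{it}$ with $h$, and the reduction of the cocycle to the corner $h\mathcal{M}h$ where $\varphi_h=\psi_h$ forces triviality. What your route buys is independence from Proposition \ref{glpbi} and from the dual quantum group altogether, at the cost of having to cite carefully the lemma that the Connes cocycle of reduced weights equals the compression of the cocycle when the projection lies in both centralizers (which Proposition \ref{glp}(3) does guarantee here); your fallback KMS/mollifier argument is in fact closer in spirit to the paper's final computation and is a reasonable hedge against that citation. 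Both halves of the statement are established, so I see no gap.
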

\begin{proof}
If $h$ is a group-like projection in $\mathcal{L}_\varphi$. Then $\lambda(h\varphi)$ is a group-like projection again.
By Proposition \ref{glp}, we have that $\hat{\sigma}_t^{\hat{\varphi}}(\lambda(h\varphi))=\lambda(h\varphi)$.
Hence $\rho_t(h\varphi)=h\varphi$ for any $t\in\mathbb{R}$. Now for any $b$ in $\mathfrak{N}_\varphi^*$, we have
\begin{eqnarray*}
\varphi(bh)=(h\varphi)(b)=\rho_t(h\varphi)(b)=\varphi(\delta^{-it}\tau_{-t}(b)h)
=\varphi(\delta^{-it}b h)=\varphi(bh\delta^{-it}).
\end{eqnarray*}
This implies that $h=h\delta^{-it}$ for all $t\in\mathbb{R}$.
\end{proof}

\begin{proposition}
Suppose $\mathbb{G}$ a locally compact quantum  group. Suppose that $\varphi$ is tracial or $\varphi=\psi$. Then a projection $h\in \mathcal{L}_\varphi$ is a group-like projection if and only if $h$ is a biprojection.
\end{proposition}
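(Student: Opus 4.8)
The forward implication is immediate. If $h$ is a group-like projection in $\mathcal{L}_\varphi$, then $h\in L^\infty(\mathbb{G})\cap L^1(\mathbb{G})$, and Proposition \ref{glpbi} shows that $\varphi(h)^{-1}\mathcal{F}_1(h)$ is a group-like projection; in particular $\mathcal{F}_1(h)$ is a multiple of a projection, so $h$ is a biprojection. The content of the statement is therefore entirely in the converse, and this is the only place where the hypothesis that $\varphi$ is tracial or $\varphi=\psi$ is used.

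For the converse, suppose $h$ is a biprojection, so that $\mathcal{F}_1(h)=\lambda(h\varphi)=cP$ for a scalar $c$ and a projection $P$ in $L^\infty(\hat{\mathbb{G}})$. First I would recover the convolution-idempotent property of $h$. Since $\lambda$ is an injective homomorphism from $(\mathcal{M}_*,*)$ into $\hat{\mathcal{M}}$, the relation $\lambda(h\varphi)^2=c\,\lambda(h\varphi)$ forces $h\varphi*h\varphi=c\,h\varphi$; evaluating both sides at $1$ and using $\Delta(1)=1\otimes 1$ gives $c=\varphi(h)$, so that
$$h*h=\varphi(h)\,h.$$
Next I would record the auxiliary invariances that a group-like projection satisfies by Proposition \ref{glp} and Corollary \ref{mo}, namely $R(h)=h$, $\sigma^\varphi_t(h)=h$ for all $t$, and $\delta^{it}h=h\delta^{it}=h$. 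For a biprojection these must be derived rather than assumed: the self-adjointness of $P$ together with the identity $\lambda(h\varphi)^*=\lambda(\overline{h\varphi}\,S)$ used in the proof of Proposition \ref{glpbi} yields a relation between $h\varphi$ and its image under the unitary antipode, and applying the same circle of ideas to the projection $P$ inside $\hat{\mathbb{G}}$ should pin down the modular behaviour of $h$.

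With these invariances in hand, the plan is to establish the coalgebra identity $\Delta(h)(1\otimes h)=h\otimes h$ by showing that
$$z:=\Delta(h)(1\otimes h)-h\otimes h$$
satisfies $(\varphi\otimes\varphi)(z^*z)=0$ and then invoking faithfulness of $\varphi\otimes\varphi$ on $\mathcal{M}\,\overline{\otimes}\,\mathcal{M}$. Since $h$ and $\Delta(h)$ are projections, $z^*z$ expands into four terms of the shape $(\varphi\otimes\varphi)[(1\otimes h)\Delta(h)(1\otimes h)]$ and $(\varphi\otimes\varphi)[(h\otimes h)\Delta(h)(1\otimes h)]$. Because $h$ lies in the centralizer of $\varphi$ (the invariance $\sigma^\varphi_t(h)=h$), the outer copies of $h$ may be absorbed cyclically, and each term is then evaluated by the convolution-idempotent identity $h*h=\varphi(h)h$ together with the left invariance $(\iota\otimes\varphi)\Delta(h)=\varphi(h)1$. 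A short bookkeeping reduces $(\varphi\otimes\varphi)(z^*z)$ to $\varphi\big((\iota\otimes h\varphi)\Delta(h)\big)-\varphi(h)^2$, which vanishes once one knows the right invariance $(\varphi\otimes\iota)\Delta(h)=\varphi(h)1$ on $h$. Thus $\Delta(h)(1\otimes h)=h\otimes h$, and $h$ is a group-like projection.

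The main obstacle is the handling of the two hypothesis cases. When $\varphi=\psi$ the algebra is unimodular, so $\varphi$ is also right invariant and $(\varphi\otimes\iota)\Delta(h)=\varphi(h)1$ is available directly, but $\varphi$ need not be tracial; when $\varphi$ is tracial one has full cyclicity, but $\varphi\neq\psi$ forces the modular element $\delta$ into the right-invariance computation, since only $\psi$ is genuinely right invariant. In each case one must convert between $\varphi$ and $\psi$ on expressions involving $h$, and it is exactly the relation $\delta^{it}h=h\delta^{it}=h$ that makes $\psi$ and $\varphi$ agree there and collapses both sub-cases to the clean computation above. I expect the step requiring the most care to be the derivation of the auxiliary invariances for a mere biprojection: for a group-like projection they are furnished by Proposition \ref{glp}, whereas here they are part of what must be proved.
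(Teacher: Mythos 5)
Your forward direction and the overall shape of your converse (expand $z^*z$ with $z=\Delta(h)(1\otimes h)-h\otimes h$, apply $\varphi\otimes\varphi$, invoke faithfulness) coincide with the paper's proof, but the two steps you leave at the level of ``should'' are exactly where the content lies, and as written they do not go through. The convolution identity $h\varphi*h\varphi=\varphi(h)\,h\varphi$ is not by itself enough to evaluate the cross terms of the four-term expansion: it expresses $(h\varphi\otimes h\varphi)\Delta(a)$ for test elements $a$, whereas the terms you must compute are slices of $\Delta(h)$ against functionals built from $h$ and $\varphi$. The bridge is the slice identity $(\varphi h\otimes\iota)\Delta(h)=\varphi(h)h$, and to reach it one must first show that $h\varphi S^{-1}$ is bounded and equals $\frac{\mu}{\bar{\mu}}\,\varphi h$; this is extracted from the \emph{second} half of the biprojection hypothesis, namely $\lambda(h\varphi)^*=\frac{\bar{\mu}}{\mu}\lambda(h\varphi)$, together with Proposition 2.4 of \cite{KuVaes03}. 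This is the centerpiece of the paper's converse, and your proposal only alludes to it.

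More seriously, your handling of the tracial case is circular. You propose to use $\delta^{it}h=h\delta^{it}=h$ to make $\varphi$ act as if it were right invariant on expressions involving $h$. But Corollary \ref{mo}, and the invariances $\sigma_t^\varphi(h)=h$, $R(h)=h$ of Proposition \ref{glp}, are established only for group-like projections: the corollary rests on $\hat{\sigma}_t^{\hat{\varphi}}(\lambda(h\varphi))=\lambda(h\varphi)$, which comes from applying Proposition \ref{glp} to the dual group-like projection $\varphi(h)^{-1}\lambda(h\varphi)$, whose group-likeness (Proposition \ref{glpbi}) is itself a consequence of $h$ being group-like. For a mere biprojection none of this is available, so you cannot assume it while proving group-likeness; you would need an independent derivation, which you do not supply. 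The paper sidesteps the issue: in the tracial case it uses traciality to cycle the copies of $h$ and the slice identity $(\varphi h\otimes\iota)\Delta(h)=\varphi(h)h$ to evaluate each of the four terms as $\varphi(h)^2$, and in the case $\varphi=\psi$ it uses the genuine right invariance of $\psi$; neither case requires $\delta^{it}h=h$ or $\sigma_t^\varphi(h)=h$ for the biprojection.
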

\begin{proof}
Proposition \ref{glpbi} showed that if $h\in \mathcal{L}_\varphi$ is a group-like projection, then $h$ is a biprojection. Now we will prove the reverse.

Suppose $h$ is a biprojection. Then $\mathcal{F}_1(h)$ is a multiple of a projection. Suppose $\mathcal{F}_1(h)=\lambda(h\varphi)=\mu h_0$ for some projection $h_0$ in $L^\infty(\hat{\mathbb{G}})$ and $\mu\in\mathbb{C}\backslash\{0\}$. Then $\lambda(h\varphi)^2=\mu \lambda(h\varphi)$ and $\lambda(h\varphi)^*=\frac{\bar{\mu}}{\mu}\lambda(h\varphi)$.

By Proposition 2.4 in \cite{KuVaes03}, we have that $(h\varphi)^*$ is bounded i.e. $\overline{h\varphi}S$ extends to a bounded linear functional on $\mathcal{M}$.
By $(h\varphi)^*=\frac{\bar{\mu}}{\mu}h\varphi$,
we see that $h\varphi S^{-1}$ extends to bounded linear functional $\frac{\mu}{\bar{\mu}}\overline{(h\varphi)}$ on $\mathcal{M}$. Hence $h\varphi S^{-1}=\frac{\mu}{\bar{\mu}}\overline{(h\varphi)}=\frac{\mu}{\bar{\mu}}\varphi h$.

For any $a$ in $\mathcal{R}_\varphi$, we have
\begin{eqnarray*}
(h\varphi*h\varphi)(a)&=&h\varphi((\iota\otimes h\varphi)\Delta(a))\\
&=&h\varphi((\iota\otimes\varphi)(\Delta(a)(1\otimes h)))\\
&=&h\varphi S^{-1}((\iota\otimes\varphi)(1\otimes a)\Delta(h))\\
&=&\frac{\mu}{\bar{\mu}}(\varphi h\otimes \varphi a)\Delta(h)\\
&=&\frac{\mu}{\bar{\mu}}\varphi(a(\varphi h\otimes\iota)(\Delta(h))
\end{eqnarray*}
i.e. $\bar{\mu} h=(\varphi h\otimes\iota)(\Delta(h)).$
Applying $\varphi$ to the equation, we have
$$\bar{\mu}\varphi(h)=(\varphi h\otimes \varphi)(\Delta(h))=\varphi(h)^2.$$
Hence $\mu=\varphi(h)$.

 Note that
\begin{eqnarray*}
\varphi(h)h&=&(\varphi h\otimes\iota)(\Delta(h))\\
&=&(h\varphi\otimes\iota)(\Delta(h))\\
&=&(h\varphi\otimes\iota)(\Delta(h)(1\otimes h))\\
&=&(\varphi h\otimes\iota)((1\otimes h)\Delta(h))\\
&=&(h\varphi \otimes\iota)((1\otimes h)\Delta(h)(1\otimes h))\\
&=&(\varphi h\otimes\iota)((1\otimes h)\Delta(h)(1\otimes h)),
\end{eqnarray*}
and
\begin{eqnarray*}
\lefteqn{((1\otimes h)\Delta(h)-h\otimes h)(\Delta(h)(1\otimes h)-h\otimes h)}\\
&=& (1\otimes h)\Delta(h)(1\otimes h)-(1\otimes h)\Delta(h) (h\otimes h)-(h\otimes h)\Delta(h)(1\otimes h)+h\otimes h.
\end{eqnarray*}

If $\varphi$ is tracial, we have
\begin{eqnarray*}
\lefteqn{(\varphi\otimes\varphi)((1\otimes h)\Delta(h)-h\otimes h)(\Delta(h)(1\otimes h)-h\otimes h))}\\
&=& (\varphi\otimes\varphi)((1\otimes h)\Delta(h)(1\otimes h))-(\varphi\otimes\varphi)((1\otimes h)\Delta(h) (h\otimes h))\\
&&-(\varphi\otimes\varphi)((h\otimes h)\Delta(h)(1\otimes h))+(\varphi\otimes\varphi)(h\otimes h)\\
&=&\varphi(h)^2-\varphi(h)^2-\varphi(h)^2+\varphi(h)^2=0
\end{eqnarray*}
i.e. $(1\otimes h)\Delta(h)=h\otimes h$. This indicates that $h$ is a group-like projection.

If $\varphi=\psi$, we have
\begin{eqnarray*}
\varphi(h)\psi(h)&=&(\varphi\otimes\psi)(h\otimes h)\Delta(h)(1\otimes h)\\
&=&(\varphi\otimes\psi)(1\otimes h)\Delta(h)(h\otimes h).
\end{eqnarray*}

By the rignt invariance of $\psi$, we have
$$(\psi\otimes\ h\varphi h)\Delta(h)=(\psi\otimes\varphi)(1\otimes h)\Delta(h)(1\otimes h)=\psi(h)\varphi(h).$$

Since $\varphi=\psi$ and
\begin{eqnarray*}
\lefteqn{(\varphi\otimes\varphi)((1\otimes h)\Delta(h)-h\otimes h)(\Delta(h)(1\otimes h)-h\otimes h)}\\
&=&\varphi(h)^2-\varphi(h)^2-\varphi(h)^2+\varphi(h)^2,
\end{eqnarray*}
we see that $(1\otimes h)\Delta(h)=h\otimes h$ and $h$ is a group-like projection.
\end{proof}

\begin{question}
Is there a biprojection $h$ in a locally compact quantum group which is not a group-like projection?
\end{question}

\begin{proposition}\label{hyglp123}
Let $\mathbb{G}$ be a locally compact quantum group with a group-like projection in $L^1(\mathbb{G})\cap L^\infty(\mathbb{G})$.
Then Young's inequality in Theorem \ref{younglcqg} and Hausdorff-Young inequality in \cite{Cooney} are sharp.
\end{proposition}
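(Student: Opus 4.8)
The plan is to exhibit the group-like projection $h$ itself, and the pair $x=y=h$, as extremal elements, so that equality is attained in both inequalities with constant $1$. Everything rests on one uniform norm identity: for every $p\in[1,\infty]$ one has $\|h\|_p=\varphi(h)^{1/p}$. I would prove this by passing to the Hilsum space via the isometry $\Phi_p$, under which $h$ corresponds to $[hd^{1/p}]$. Since $h$ is a group-like projection, Proposition \ref{glp} gives $\sigma^\varphi_t(h)=h$, so $h$ lies in the centralizer of $\varphi$ and commutes with $d=\frac{d\varphi}{d\phi}$; being a projection, $|hd^{1/p}|^p=hd$, whence $\|h\|_p^p=\varphi(h)$. (As a check, $p=1,2,\infty$ recover $\varphi(h)$, $\varphi(h)^{1/2}$, and $1$.) Note $\varphi(h)\in(0,\infty)$ since $h\in L^1(\mathbb{G})\cap L^\infty(\mathbb{G})$ is a nonzero projection.

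For the Hausdorff--Young inequality I would compute $\|\mathcal{F}_p(h)\|_{p'}$ and compare it with $\|h\|_p$. Since $h\varphi\in\mathcal{I}$ with $\xi(h\varphi)=\Lambda_\varphi(h)$, we have $\mathcal{F}_p(\xi_p(h\varphi))=\hat{\Lambda}_{p'}(\lambda(h\varphi))$, which under $\hat{\Phi}_{p'}^{-1}$ becomes the Hilsum operator $\lambda(h\varphi)\hat{d}^{1/p'}$. By Proposition \ref{glpbi}, $\lambda(h\varphi)=\varphi(h)\hat{h}$ with $\hat{h}=\varphi(h)^{-1}\mathcal{F}_1(h)$ a group-like projection in $\hat{\mathbb{G}}$; by the same centralizer argument applied to $\hat{\mathbb{G}}$, $\hat{h}$ commutes with $\hat{d}$, so $\|\mathcal{F}_p(h)\|_{p'}^{p'}=\varphi(h)^{p'}\hat{\varphi}(\hat{h})$. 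Finally Proposition \ref{glpbi} gives $\varphi(h)\hat{\varphi}(\hat{h})=1$, hence $\|\mathcal{F}_p(h)\|_{p'}=\varphi(h)^{(p'-1)/p'}=\varphi(h)^{1/p}=\|h\|_p$, so equality holds and the inequality is sharp.

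For Young's inequality I would take $x=y=h$. By Corollary \ref{mo} we have $\rho_t(h\varphi)=h\varphi$ for all real $t$, and analytic continuation gives $\rho_{-i/p'}(h\varphi)=h\varphi$; combined with the identity $h*h=\varphi(h)h$ established inside the proof of Proposition \ref{glpbi}, this yields $x*\rho_{-i/p'}(y)=\varphi(h)\,h$ in $L^r(\mathbb{G})$. Using the norm identity of the first paragraph together with the exponent relation $1+\tfrac1r=\tfrac1p+\tfrac1q$, I compute
$$\|x*\rho_{-i/p'}(y)\|_r=\varphi(h)\,\varphi(h)^{1/r}=\varphi(h)^{1+1/r}=\varphi(h)^{1/p+1/q}=\|h\|_p\|h\|_q,$$
so equality holds in Theorem \ref{younglcqg} and the constant $1$ is optimal.

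The main obstacle I expect is not the algebra but the careful justification of the norm identity $\|h\|_p=\varphi(h)^{1/p}$ and of the identifications of $h$, $\mathcal{F}_p(h)$, and $x*\rho_{-i/p'}(y)$ inside the interpolation/Hilsum picture — in particular verifying that $h$ (resp. $\hat{h}$) genuinely commutes with the spatial derivative $d$ (resp. $\hat{d}$) so that $|hd^{1/p}|^p=hd$, and that the limiting definitions of the convolution and of the Fourier transform restrict on $h\varphi\in\mathcal{I}$ to the closed-form expressions used above. Once these identifications are in place, both sharpness statements follow from the single relation $\varphi(h)\hat{\varphi}(\hat{h})=1$ together with the exponent arithmetic.
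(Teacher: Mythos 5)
Your proposal is correct and follows essentially the same route as the paper: take the group-like projection $h$ itself (and the pair $(h,h)$) as the extremizer, compute $\|h\|_p=\varphi(h)^{1/p}$ and $\|\mathcal{F}_p(h)\|_{p'}=\varphi(h)^{1/p}$ in the Hilsum picture using $\sigma^\varphi$-invariance of $h$ and the fact that $\varphi(h)^{-1}\lambda(h\varphi)$ is a group-like projection with $\varphi(h)\hat{\varphi}(\mathcal{R}(\mathcal{F}_1(h)))=1$, and then use $\rho_t(h\varphi)=h\varphi$ together with $h*h=\varphi(h)h$ to get equality in Young's inequality. The only cosmetic discrepancy is that $\rho_t(h\varphi)=h\varphi$ is established inside the proof of Corollary \ref{mo} (and re-derived in the paper's proof) rather than being its stated conclusion, but this does not affect the argument.
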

\begin{proof}
Suppose $h$ is a group-like projection in $\mathcal{L}_\varphi$.
Since $h$ is analytic with respect to $\sigma^\varphi$ and $\lambda(h\varphi)$ is analytic with respect to $\hat{\sigma}^{\hat{\varphi}}$,
we have that $hd^{1/p}\subseteq d^{1/p}h$ for $1\leq p\leq 2$ and $\lambda(h\varphi)\hat{d}^{1/q}=\hat{d}^{1/q}\lambda(h\varphi)$ for $2\leq q\leq \infty$ from Theorem 2.4 in \cite{Cooney}.
Now
$$\|h\|_p=\|[hd^{1/p}]\|_{p,L^p(\phi)}=\varphi(h)^{1/p}$$
and
$$\|\lambda(h\varphi)\|_q=\|[\lambda(h\varphi)\hat{d}^{1/q}]\|_{q,L^q(\phi)}=\varphi(h)^{\frac{q-1}{q}}.$$
If $\frac{1}{p}+\frac{1}{q}=1$, we have $\|\mathcal{F}_p(h)\|_q=\|h\|_p$ for any $1\leq p\leq 2$.

Since $\frac{\lambda(h\varphi)}{\varphi(h)}$ is a group-like projection, we see that $\hat{\sigma}_t(\lambda(h\varphi))=\lambda(h\varphi)$ and $\hat{\sigma}_t(\lambda(h\varphi))=\lambda(\rho_t(h\varphi))$.
Hence $\rho_t(h\varphi)=h\varphi$ for $t\in\mathbb{R}$ and $\|h\varphi*\rho_{-p'}(h\varphi)\|_r=\|\varphi(h) h\varphi\|_r=\varphi(h)^{1+1/r}$ for $1\leq r\leq 2$.
If $1+\frac{1}{r}=\frac{1}{p}+\frac{1}{q}$, we have $\|h\varphi*\rho_{-p'}(h\varphi)\|_r=\|h\varphi\|_p\|h\varphi\|_q$.
\end{proof}

\begin{definition}
Let $\mathbb{G}$ be a locally compact quantum group. A projection $x$ in $L^\infty(\mathbb{G})$ is a right shift of a group-like projection $h$ if $\psi(x)=\psi(h)$,
$$\Delta(x)(h\otimes 1)=h\otimes x,\quad \Delta(h)(x\otimes 1)=x\otimes R(x).$$
A projection $x$ in $L^\infty(\mathbb{G})$ is a left shift of a group-like projection $h$ if $\varphi(x)=\varphi(h)$ and
$$\Delta(x)(1\otimes h)=x\otimes h,\quad \Delta(h)(1\otimes x)=R(x)\otimes x.$$
\end{definition}

\begin{remark}
If $x$ is a right shift of a group-like projection $h$, then $R(x)$ is a left shift of $h$.
\end{remark}

\begin{remark}
Let $G$ be a locally compact group and $H$ a subgroup of $G$.
Suppose $1_{xH}$ is the characteristic function on a left coset $xH$ of $H$. Then $1_{xH}$ is a left shift of $1_{H}$.
\end{remark}

\begin{proposition}\label{lrshift}
Let $\mathbb{G}$ be a locally compact quantum group.
Suppose $x\in\mathcal{L}_\psi$ is a right shift of a group-like projection $h\in\mathcal{L}_\psi$ and $y\in\mathcal{L}_\varphi$ is a left shift of $h$.
Then
$$\tau_t(x)=x,\quad \sigma_t^\psi(x)=x,\quad x\delta^{it}=\mu_x^{it}x$$
for some $\mu_x>0$ and all $t\in\mathbb{R}$
and
$$\tau_t(y)=y,\quad \sigma_t^\varphi(y)=y,\quad y\delta^{it}=\mu_y^{it}y$$ for some $\mu_y>0$ and all $t\in\mathbb{R}$.
\end{proposition}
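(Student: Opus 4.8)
The plan is to prove all three identities for the right shift $x$ first, and then deduce the corresponding identities for the left shift $y$ either by repeating the argument with the two legs and the roles of $\varphi,\psi$ interchanged, or by invoking the preceding Remark: since $y$ is a left shift of $h$, $R(y)$ is a right shift of $h$, and the symmetries $\tau_tR=R\tau_t$, $R\sigma_t^\psi=\sigma_{-t}^\varphi R$, $R(\delta^{it})=\delta^{-it}$ transport the conclusions for $R(y)$ back to $y$ (turning $\sigma^\psi$ into $\sigma^\varphi$, as required). So fix the right shift $x$ with defining relations $\Delta(x)(h\otimes 1)=h\otimes x$ and $\Delta(h)(x\otimes 1)=x\otimes R(x)$, noting $\psi(x)=\psi(h)\neq 0$ and that $\nu=1$, $R(h)=h$, $\tau_t(h)=h$, $\sigma_t^\varphi(h)=\sigma_t^\psi(h)=h$ are available from Proposition \ref{glp}.

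First I would establish $\tau_t(x)=x$. Taking adjoints in $\Delta(h)(x\otimes 1)=x\otimes R(x)$, using that $h,x,R(x)$ are self-adjoint, gives $(x\otimes 1)\Delta(h)=x\otimes R(x)$, whence $(\psi\otimes\iota)((x\otimes 1)\Delta(h))=\psi(x)R(x)$. Applying the $\psi$-analogue of the strong left invariance identity used in Proposition \ref{glp} (obtained from it by conjugation with $R$, together with $\chi(R\otimes R)\Delta=\Delta R$ and $\psi=\varphi R$) to $\Delta(x)(h\otimes 1)$ expresses $(\psi\otimes\iota)(\Delta(x)(h\otimes 1))$ as $S^{\mp1}$ applied to $\psi(x)R(x)$, while the defining relation gives $(\psi\otimes\iota)(\Delta(x)(h\otimes 1))=\psi(h)x$. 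Comparing and using $S^{\mp1}R=\tau_{\pm i/2}$ and $\psi(x)=\psi(h)$ collapses this to $\tau_{\pm i/2}(x)=x$. Smoothing by $x_n=\frac{n}{\sqrt\pi}\int\exp(-n^2t^2)\tau_t(x)\,dt$, exactly as in Proposition \ref{glp}, then forces $\tau_t(x)=x$ for all real $t$.

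With $\tau$-invariance in hand the remaining identities are purely algebraic. For the modular group of $\psi$, apply the automorphism $\sigma_t^\varphi\otimes\sigma_{-t}^\psi$ to $\Delta(x)(h\otimes 1)=h\otimes x$: by $\Delta\tau_t=(\sigma_t^\varphi\otimes\sigma_{-t}^\psi)\Delta$ and $\tau_t(x)=x$ one has $(\sigma_t^\varphi\otimes\sigma_{-t}^\psi)\Delta(x)=\Delta(x)$, and $\sigma_t^\varphi(h)=h$, so the left-hand side is unchanged while the right-hand side becomes $h\otimes\sigma_{-t}^\psi(x)$. Thus $h\otimes x=h\otimes\sigma_{-t}^\psi(x)$, and since $h\neq0$ uniqueness of elementary tensors gives $\sigma_t^\psi(x)=x$. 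For the modular element, multiply $\Delta(h)(x\otimes 1)=x\otimes R(x)$ on the left by $\delta^{it}\otimes\delta^{it}=\Delta(\delta^{it})$; since $\Delta(\delta^{it}h)=\Delta(h)$ by Corollary \ref{mo}, the left-hand side is again unchanged, so $x\otimes R(x)=\delta^{it}x\otimes\delta^{it}R(x)$. Uniqueness of elementary tensors yields a scalar $\zeta_t$ with $\delta^{it}x=\zeta_t x$; as $\delta^{it}$ is unitary and $t\mapsto\zeta_t$ is a continuous character of $\mathbb{R}$, we get $\zeta_t=\mu_x^{it}$ with $\mu_x>0$, and taking adjoints gives $x\delta^{it}=\mu_x^{it}x$.

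I expect the main obstacle to lie entirely in the first step: pinning down the precise form of the strong right invariance identity and the correct power of the antipode, and justifying the weight-slice computations together with the membership of $x$ in the relevant domains of $S$ and $\tau_{\pm i/2}$ and the analyticity needed for the Gaussian smoothing. Once $\tau_t(x)=x$ is secured, the arguments for $\sigma^\psi$ and for $\delta$ are short and follow the patterns already present in Proposition \ref{glp} and Corollary \ref{mo}, and the identities for $y$ follow from the $R$-symmetry noted at the outset.
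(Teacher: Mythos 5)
Your proposal is correct and follows essentially the same route as the paper: strong right invariance applied to the two defining relations yields $\psi(h)S(x)=\psi(x)R(x)$, hence $\tau_{-i/2}(x)=x$ and, after Gaussian smoothing, $\tau_t(x)=x$; then $\Delta\tau_t=(\sigma_t^\varphi\otimes\sigma_{-t}^\psi)\Delta$ gives the $\sigma^\psi$-invariance and $\Delta(\delta^{it})=\delta^{it}\otimes\delta^{it}$ together with Corollary \ref{mo} gives $\delta^{it}x=\mu_x^{it}x$, with the left-shift case obtained by symmetry. The only differences are presentational (you use uniqueness of elementary tensors where the paper slices with $\psi$, and you make explicit the smoothing and the $R$-conjugation for $y$ that the paper leaves implicit).
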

\begin{proof}
Suppose $x\in\mathcal{L}_\psi$ is a right shift of $h$. Then
$$\psi(h)x=(\psi\otimes \iota)(\Delta(x)(h\otimes 1)),\quad \psi(x)R(x)=(\psi\otimes \iota)(\Delta(h)(x\otimes 1)).$$
Now we see that $x$ is in $\mathcal{D}(S)$, and
\begin{eqnarray*}
\psi(h)S(x)=(\psi\otimes\iota)((x\otimes 1)\Delta(h))=\psi(x)R(x).
\end{eqnarray*}
Hence $\tau_t(x)=x$ for all $t\in \mathbb{R}$.

By the relation $\Delta\tau_t=(\sigma_t^\varphi\otimes \sigma_{-t}^\psi)\Delta$ in Proposition 6.8 of \cite{KuVaes}, we have
\begin{eqnarray*}
\psi(h)\sigma_{-t}^\psi(x)&=&(\psi\sigma_t^\varphi\otimes\sigma_{-t}^\psi)(\Delta(x)(h\otimes 1))\\
&=&(\psi\otimes\iota)(\Delta(\tau_t(x))(\sigma_t^\varphi(h)\otimes 1))\\
&=&(\psi\otimes\iota)(\Delta(x)(h\otimes 1))=\psi(h)x.
\end{eqnarray*}

By the relation $\Delta(\delta)=\delta\otimes\delta$ in Proposition 7.12 of \cite{KuVaes} and Corollary \ref{mo}, we have
\begin{eqnarray*}
\delta^{it}x\otimes\delta^{it}R(x)&=&(\delta^{it}\otimes \delta^{it})\Delta(h)(x\otimes 1)\\
&=&\Delta(\delta^{it}h)(x\otimes 1)=x\otimes R(x).
\end{eqnarray*}
Therefore for any $\omega\in L^1(\mathbb{G})$, we have $\omega(\delta^{it}R(x))\delta^{it}x=\omega(R(x))x$.
Hence there exists $\mu_x> 0$ such that $\delta^{it}x=\mu_x^{it}x$.

Following the argument above, we have similar properties for a left shift of a group-like projection.
\end{proof}

\begin{corollary}\label{lrc}
Let $\mathbb{G}$ be a locally compact quantum group.
Suppose $x\in\mathcal{L}_\varphi$ is a left shift of a group-like projection $h\in \mathcal{L}_\varphi$.
Then $\hat{\sigma}_t^{\hat{\varphi}}(\mathcal{F}_1(x))=\mu_x^{-it}\mathcal{F}_1(x)$, $\hat{\tau}_t(\mathcal{F}_1(x))=\mathcal{F}_1(x)$ and $\hat{\delta}^{it}\mathcal{F}_1(x)=\mathcal{F}_1(x)$ for all $t\in\mathbb{R}$.
\end{corollary}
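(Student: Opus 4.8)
The plan is to transport the three invariance properties of the left shift $x$ established in Proposition \ref{lrshift} through the Fourier transform $\mathcal{F}_1(x)=\lambda(x\varphi)$, using the standard Kustermans--Vaes covariance relations that express the dual structure maps acting on $\lambda(\omega)$ in terms of operations on $\omega\in\mathcal{M}_*$. Since $x$ is a left shift of a group-like projection $h$, Proposition \ref{glp} gives $\nu=1$, while Proposition \ref{lrshift} supplies $\tau_t(x)=x$, $\sigma_t^\varphi(x)=x$ and $x\delta^{it}=\mu_x^{it}x$ (equivalently $\delta^{-it}x=\mu_x^{-it}x$, since $x=x^*$). Throughout I would use that $\nu=1$ forces $\varphi\circ\tau_t=\varphi$ and $\sigma_t^\varphi(\delta^{is})=\delta^{is}$, together with the KMS identity $\varphi(ab)=\varphi(b\sigma_{-i}^\varphi(a))$.

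For the modular part I use the relation $\hat\sigma_t^{\hat\varphi}(\lambda(\omega))=\lambda(\rho_t(\omega))$ already exploited in Corollary \ref{mo} and Proposition \ref{hyglp123}. Computing $\rho_t(x\varphi)(a)=(x\varphi)(\delta^{-it}\tau_{-t}(a))=\varphi(\delta^{-it}\tau_{-t}(a)x)$ and applying $\tau_t$ inside $\varphi$ (legitimate as $\varphi\circ\tau_t=\varphi$) collapses the $\tau$-twist and leaves $\varphi(\delta^{-it}ax)$; pushing $\delta^{-it}$ to the right by the KMS identity and $\sigma_{-i}^\varphi(\delta^{-it})=\delta^{-it}$, then using $x\delta^{-it}=\mu_x^{-it}x$, yields $\rho_t(x\varphi)=\mu_x^{-it}\,x\varphi$, hence $\hat\sigma_t^{\hat\varphi}(\mathcal{F}_1(x))=\mu_x^{-it}\mathcal{F}_1(x)$. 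For the scaling part I use the covariance of the multiplicative unitary under $\tau$, which gives $\hat\tau_t(\lambda(\omega))=\lambda(\omega\circ\tau_{-t})$; since $(x\varphi)\circ\tau_{-t}(a)=\varphi(\tau_{-t}(a)x)=\varphi(a\tau_t(x))=\varphi(ax)$ by $\tau_t(x)=x$ and $\varphi\circ\tau_t=\varphi$, we get $(x\varphi)\circ\tau_{-t}=x\varphi$ and therefore $\hat\tau_t(\mathcal{F}_1(x))=\mathcal{F}_1(x)$.

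The delicate point, which I expect to be the main obstacle, is the modular-element relation $\hat\delta^{it}\mathcal{F}_1(x)=\mathcal{F}_1(x)$, because here $\hat\delta^{it}$ acts by left multiplication in $\hat{\mathcal{M}}$ rather than by an automorphism. I would derive it from the duality formula $\hat\delta^{it}\lambda(\omega)=\lambda(\gamma_t(\omega))$ with $\gamma_t(\omega)(a)=\omega(\delta^{-it}a\delta^{it})$, obtained from the group-likeness $\hat\Delta(\hat\delta^{it})=\hat\delta^{it}\otimes\hat\delta^{it}$ (equivalently the commutation of $1\otimes\hat\delta^{it}$ with $W$) together with $\Delta(\delta)=\delta\otimes\delta$. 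Substituting $\omega=x\varphi$ and using $\delta^{it}x=\mu_x^{it}x$ gives $\gamma_t(x\varphi)(a)=\varphi(\delta^{-it}a\delta^{it}x)=\mu_x^{it}\varphi(\delta^{-it}ax)=\mu_x^{it}\mu_x^{-it}\varphi(ax)=\varphi(ax)$, so the two powers of $\mu_x$ cancel and $\gamma_t(x\varphi)=x\varphi$, whence $\hat\delta^{it}\mathcal{F}_1(x)=\mathcal{F}_1(x)$. The hard part is pinning down this formula with the correct conventions and signs so that the $\mu_x^{\pm it}$ factors cancel exactly (as they must, given the cleaner behaviour of $\hat\sigma^{\hat\varphi}$); should the direct formula prove awkward, an alternative is to combine the already-established identity $\hat\sigma_t^{\hat\varphi}(\mathcal{F}_1(x))=\mu_x^{-it}\mathcal{F}_1(x)$ with the relation $\hat\sigma_t^{\hat\psi}(a)=\hat\delta^{it}\hat\sigma_t^{\hat\varphi}(a)\hat\delta^{-it}$, an independent computation of $\hat\sigma_t^{\hat\psi}(\mathcal{F}_1(x))$ through $\hat\psi=\hat\varphi\hat R$ and $\hat R(\lambda(\omega))=\lambda(\omega R)$ (Proposition 8.17 of \cite{KuVaes}), and the fact that $\hat\delta^{it}$ fixes the dual group-like projection $\varphi(h)^{-1}\mathcal{F}_1(h)$ by Corollary \ref{mo}.
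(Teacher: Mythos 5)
Your handling of the first two identities is correct and coincides with the paper's proof: you use $\hat\sigma_t^{\hat\varphi}(\lambda(\omega))=\lambda(\rho_t(\omega))$ and $\hat\tau_t(\lambda(\omega))=\lambda(\omega\circ\tau_{-t})$, and feed in the invariances $\tau_t(x)=x$, $\sigma_t^\varphi(x)=x$, $x\delta^{it}=\mu_x^{it}x$ from Proposition \ref{lrshift} (together with $\nu=1$ and the KMS cycling) to get $\rho_t(x\varphi)=\mu_x^{-it}\,x\varphi$ and $(x\varphi)\circ\tau_{-t}=x\varphi$. The paper states exactly these two lines, only more tersely.

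The gap is in the third identity. The formula you rely on, $\hat\delta^{it}\lambda(\omega)=\lambda(\gamma_t(\omega))$ with $\gamma_t(\omega)=\omega(\delta^{-it}\cdot\,\delta^{it})$, is not a correct Kustermans--Vaes relation, and it does not follow from $\hat\Delta(\hat\delta^{it})=\hat\delta^{it}\otimes\hat\delta^{it}$ together with $\Delta(\delta)=\delta\otimes\delta$: that input only controls how $W$ conjugates $\hat\delta^{it}\otimes 1$, and the conjugation it produces in the first leg is by the dual element $\hat\delta^{it}$, not by $\delta^{it}\in\mathcal{M}$. Concretely, take $\mathbb{G}$ cocommutative with $\mathcal{M}=\mathcal{L}(G)$ and the Plancherel weight, $G$ non-unimodular. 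Then $\delta=1$, so $\gamma_t(\omega)=\omega$ and your formula asserts $\hat\delta^{it}\lambda(\omega)=\lambda(\omega)$ for all $\omega$ in the Fourier algebra; but $\hat{\mathcal{M}}=L^\infty(G)$, $\hat\delta$ is the modular function $\delta_0$, and the functions $\lambda(\omega)$ are dense in $C_0(G)$, so left multiplication by $\delta_0^{it}$ is not the identity. The relation the paper implicitly invokes (which is why its proof of this part consists of the single phrase ``by the fact that $\sigma_t(x)=x$'') is $\hat\delta^{it}\lambda(\omega)=\lambda(\omega\circ\sigma_t^\varphi)$, up to the sign convention for $t$; in the example above $\sigma_t^\varphi(\lambda_g)=\delta_0(g)^{it}\lambda_g$, which correctly reproduces multiplication by $\delta_0^{it}$. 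With that relation the claim is immediate, since $(x\varphi)(\sigma_t^\varphi(a))=\varphi\bigl(\sigma_t^\varphi(a\,\sigma_{-t}^\varphi(x))\bigr)=\varphi(ax)$ by $\sigma_t^\varphi(x)=x$ and $\varphi\circ\sigma_t^\varphi=\varphi$. Your fallback route is also incomplete as stated: comparing $\hat\sigma^{\hat\varphi}$ with $\hat\sigma^{\hat\psi}$ only controls the conjugation $\hat\delta^{it}\mathcal{F}_1(x)\hat\delta^{-it}$, and to convert this into the left-multiplication statement you would further need $\mathcal{F}_1(x)\hat\delta^{-it}=\mathcal{F}_1(x)$, e.g.\ via $\mathcal{F}_1(x)^*\mathcal{F}_1(x)=\varphi(h)\mathcal{F}_1(h)$ and Corollary \ref{mo} applied to the dual group-like projection --- an argument you do not supply and which leans on Proposition \ref{bipar}, proved later in the paper.
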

\begin{proof}
\iffalse
Let $e_n=\frac{n}{\sqrt{\pi}}\int \exp(-n^2t^2)\delta^{it}dt$.
Then we have that $R(x)\delta^{1/2}e_n\in\mathcal{L}_\varphi$.
\begin{eqnarray*}
R(x)\delta^{1/2}e_n&=&\frac{n}{\sqrt{\pi}}\int\exp(-n^2(t+i/2)^2)R(\delta^{-it}x)dt\\
&=&R(x)\int\frac{n}{\sqrt{\pi}}\exp(-n^2(t+i/2)^2)\mu_x^{-it}dt.
\end{eqnarray*}
Since $\int\frac{n}{\sqrt{\pi}}\exp(-n^2(t+i/2)^2)\mu_x^{-it}dt\to \mu_x^{1/2}\neq 0$ as $n\to \infty$. Then $R(x)\in \mathcal{L}_\varphi.$
\fi

By Proposition \ref{lrshift}, we have
\begin{eqnarray*}
\hat{\sigma}_t^{\hat{\varphi}}(\mathcal{F}_1(x))=\hat{\sigma}_t^{\hat{\varphi}}(\lambda(x\varphi))
=\lambda(\rho_t(x\varphi))=\mu_x^{-it}\lambda(x\varphi)
\end{eqnarray*}
and
$$\hat{\tau}_t(\mathcal{F}_1(x))=\lambda((x\varphi)\tau_{-t})=\lambda(x\varphi).$$
By the fact that $\sigma_t(x)=x$, we see that $\hat{\delta}^{it}\mathcal{F}_1(x)=\mathcal{F}_1(x)$ for all $t\in\mathbb{R}$.
\end{proof}

\begin{remark}
Suppose $\mathbb{G}$ is a locally compact quantum group such that
the scaling automorphism group $\tau_t$ is trivial and $\varphi=\psi$ is a tracial weight.
Then the assumption $\Delta(x)(h\otimes 1)=h\otimes x$ is equivalent to the assumption $\Delta(h)(x\otimes 1)=x\otimes R(x)$.
\end{remark}

\begin{definition}
Let $\mathbb{G}$ be a locally compact quantum group. A nonzero element $x$ in $\mathcal{L}_\varphi$ is a bi-partial isometry if $x$ and $\mathcal{F}_1(x)$ are multiples of partial isometries.
\end{definition}

\begin{proposition}\label{bipar}
Let $\mathbb{G}=(\mathcal{M},\Delta,\varphi,\psi)$ be a locally compact quantum group.
Suppose $h\in\mathcal{L}_\varphi$ is a group-like projection and $x\in\mathcal{L}_\varphi$ is a left shift of $h$.
Then $x$ is a bi-partial isometry. Moreover $\mathcal{F}_1(x)^*\mathcal{F}_1(x)=\varphi(h)\mathcal{F}_1(h)$ and $\|\mathcal{F}_1(x)\|_\infty=\varphi(h)$.
\end{proposition}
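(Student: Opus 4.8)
The plan is to reduce the whole statement to a single operator identity in $\hat{\mathcal M}$, namely $\mathcal F_1(x)^*\mathcal F_1(x)=\varphi(h)\mathcal F_1(h)$, and to extract both the bi-partial-isometry claim and the norm from it. Since $x$ is by definition a projection, it is automatically a (scalar $1$ multiple of a) partial isometry, so the only content of ``bi-partial isometry'' concerns $\mathcal F_1(x)=\lambda(x\varphi)$, and that will be immediate once the displayed identity is in hand. The mechanism is that $\mathcal F_1=\lambda$ converts convolution into the product of $\hat{\mathcal M}$, that is $\lambda(\omega)\lambda(\theta)=\lambda(\omega*\theta)$, and converts the functional operation $\omega\mapsto\overline{\omega}S$ into the adjoint, that is $\lambda(\omega)^*=\lambda(\overline{\omega}S)$, exactly as used in the proof of Proposition \ref{glpbi}.

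First I would record the modular data of $x$ coming from Proposition \ref{lrshift}: from $\sigma_t^\varphi(x)=x$ the element $x$ lies in the centralizer of $\varphi$, so $x\varphi=\varphi x$ as normal functionals and, because $x\geq0$, this functional is positive. As in Proposition \ref{glpbi}, one checks that $(x\varphi)^*=\overline{x\varphi}\,S=(x\varphi)S$ extends to a normal functional and that $\lambda(x\varphi)^*=\lambda((x\varphi)^*)$, whence $\mathcal F_1(x)^*\mathcal F_1(x)=\lambda\big((x\varphi)^**(x\varphi)\big)$. Applying the antipode identity $S\big((\iota\otimes\varphi)(\Delta(z)(1\otimes x))\big)=(\iota\otimes\varphi)((1\otimes z)\Delta(x))$ together with centrality of $x$, precisely as in the earlier convolution computations, I would obtain for $z$ in a $\sigma$-strong-$*$ dense domain
$$\big((x\varphi)^**(x\varphi)\big)(z)=(\varphi\otimes\varphi)((x\otimes z)\Delta(x))=\varphi(z\,Y),\qquad Y:=(\varphi x\otimes\iota)\Delta(x).$$
Thus everything reduces to proving $Y=\varphi(h)h$, since then $(x\varphi)^**(x\varphi)=\varphi(h)\,h\varphi$ and $\mathcal F_1(x)^*\mathcal F_1(x)=\varphi(h)\lambda(h\varphi)=\varphi(h)\mathcal F_1(h)$.

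The identity $Y=\varphi(h)h$ is the heart of the matter and the step I expect to be the main obstacle, because the left-shift relations only control $\Delta(x)$ after multiplication by $h$. I would establish it in four moves. (i) $Y$ is positive, being the slice of the projection $\Delta(x)$ by the positive functional $\varphi x$, and it is self-adjoint since $\overline{\varphi x}=x\varphi=\varphi x$ by centrality. (ii) Using the left-shift relation $\Delta(x)(1\otimes h)=x\otimes h$ and its adjoint $(1\otimes h)\Delta(x)=x\otimes h$, pulling $h$ through the slice gives $hY=Yh=\varphi(x)h=\varphi(h)h$, so the corner decomposition of $Y$ collapses to $Y=\varphi(h)h+P$ with $P:=(1-h)Y(1-h)\geq0$. (iii) Left invariance $(\iota\otimes\varphi)\Delta(x)=\varphi(x)1$ yields $\varphi(Y)=(\varphi\otimes\varphi)((x\otimes1)\Delta(x))=\varphi(x)^2=\varphi(h)^2$. (iv) Comparing, $\varphi(P)=\varphi(Y)-\varphi(h)^2=0$, and faithfulness of $\varphi$ forces $P=0$, hence $Y=\varphi(h)h$. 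The only delicate points are the boundedness of $(x\varphi)^*$ and the admissibility of the weight manipulations, both handled as in Proposition \ref{glpbi} using $x,h\in\mathcal L_\varphi\subseteq\mathfrak N_\varphi$ and the $\sigma^\varphi$-analyticity of $x$.

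Finally I would read off the remaining assertions. By Proposition \ref{glpbi} the element $\varphi(h)^{-1}\mathcal F_1(h)$ is a (group-like) projection, so $\mathcal F_1(x)^*\mathcal F_1(x)=\varphi(h)^2\big(\varphi(h)^{-1}\mathcal F_1(h)\big)$ is $\varphi(h)^2$ times a projection. Consequently $|\mathcal F_1(x)|=\mathcal F_1(h)$, and the polar decomposition exhibits $\mathcal F_1(x)$ as $\varphi(h)$ times a partial isometry; together with the fact that $x$ itself is a projection, this shows $x$ is a bi-partial isometry. Taking norms, $\|\mathcal F_1(x)\|_\infty=\big\||\mathcal F_1(x)|\big\|=\|\mathcal F_1(h)\|=\varphi(h)$, which completes the proof.
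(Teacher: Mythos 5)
Your proposal is correct and follows essentially the same route as the paper: both reduce the statement to the identity $(x\varphi\otimes\iota)\Delta(x)=\varphi(h)h$, proved by exactly the same combination of positivity, the corner relations $hY=Yh=\varphi(h)h$ coming from $\Delta(x)(1\otimes h)=x\otimes h$, the evaluation $\varphi(Y)=\varphi(h)^2$, and faithfulness of $\varphi$. The only cosmetic difference is that you express $(x\varphi)^*$ as $(x\varphi)S$ and slice directly, whereas the paper first identifies it with $R(x)\psi$ before arriving at the same convolution formula.
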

\begin{proof}
Since $x$ is a left shift of $h$, we have $\tau_t(x)=x$, $\sigma_t^\varphi(x)=x$, $\delta^{it}x=\mu_x^{it}x$ for some $\mu_x>0$ and all $t\in\mathbb{R}$.
Hence $\lambda(x\varphi)=\mathcal{F}_1(x)$ satisfyes $\hat{\sigma}^{\hat{\varphi}}_t(\lambda(x\varphi))=\lambda(\rho_t(x\varphi))=\mu_x^{-it}\lambda(x\varphi)$.

Note that $(1\otimes \varphi)(\Delta(h)(1\otimes x))=\varphi(x)R(x)$, we then see that $x*h=\varphi(h)x$ and
$\mathcal{F}_1(x)\mathcal{F}_1(h) = \varphi(h)\mathcal{F}_1(x)$.

\iffalse
Let $\phi$ be a normal semifinite faithful weight on $\mathcal{M}'$ and $\hat{\phi}$ a normal semifinite faithful weight on $\widehat{M}'$.
We define $\mathcal{R}(y)$ to be the range projection of $y$ and $\mathcal{S}(y)$ to be $\varphi(\mathcal{R}(y))$. Then
\begin{eqnarray*}
\|\mathcal{F}_1(x)\|_\infty&\leq &\|x\|_1=\|x\|_2\|x\|_2=\varphi(h)^{1/2}\|\mathcal{F}_1(x)\|_{2}\\
&=&\varphi(h)^{1/2}\|\mathcal{F}_1(x)\mathcal{R}(\mathcal{F}_1(x)^*)\hat{d}^{1/2}\|_{2,L^2(\hat{\phi})}\\
&\leq &\varphi(h)^{1/2}\|\mathcal{F}_1(x)\|_\infty\hat{\varphi}(\mathcal{R}(\mathcal{F}_1(x)^*))^{1/2}\\
&\leq &\varphi(h)^{1/2}\hat{\varphi}(\mathcal{R}(\mathcal{F}_1(h)))^{1/2}\|\mathcal{F}_1(x)\|_\infty,
\end{eqnarray*}
The last equation is followed by Proposition \ref{glpbi}.
Then the inequalities above must be equality and we see that $\mathcal{F}_1(x)$ must be a multiple of a partial isometry and $\|\mathcal{F}_1(x)\|_\infty=\varphi(h).$
\fi

We shall show that $x$ is a bi-partial isometry.
For any $b$ in $\mathcal{D}(S)$ such that $S(b)\in\mathfrak{N}_\varphi$,
\begin{eqnarray*}
(x\varphi)^*(b)&=&\overline{x\varphi}(S(b))=\overline{\varphi(S(b)^*x)}=\varphi(xS(b))\\
&=&\varphi S(bS^{-1}(x))=\varphi R(bR(x))=(R(x)\psi)(b)
\end{eqnarray*}
Hence $(x\varphi)^*=R(x)\psi$.
Note that
\begin{eqnarray*}
\lambda(x\varphi)^*\lambda(x\varphi)&=&\lambda(R(x)\psi)\lambda(x\varphi)=\lambda(R(x)\psi* x\varphi).
\end{eqnarray*}
For any $b$ in $\mathcal{R}_\varphi$, we then have
\begin{eqnarray*}
(R(x)\psi* x\varphi)(b)&=&(R(x)\psi\otimes x\varphi)\Delta(b)\\
&=&(R(x)\psi)((\iota\otimes\varphi)(\Delta(b)(1\otimes x)))\\
&=& (R(x)\psi S^{-1})((\iota\otimes\varphi)((1\otimes b)\Delta(x))).
\end{eqnarray*}
For any $a$ in $\mathcal{D}(S^{-1})\cap\mathfrak{N}_\varphi$ such that $S^{-1}(a)\in\mathfrak{N}_\psi^*$, we have
\begin{eqnarray*}
(R(x)\psi S^{-1})(a)&=&\psi(S^{-1}(a)R(x))=\psi S^{-1}(S(R(x))a)=\varphi(xa),
\end{eqnarray*}
i.e. $R(x)\psi S^{-1}=\varphi x= x\varphi$. Now we see that
$R(x)\psi* x\varphi=(x\varphi\otimes \iota)(\Delta(x))\varphi$.
We will show that $(x\varphi\otimes \iota)(\Delta(x))=\varphi(h)h$.
Since $\sigma^\varphi_t(x)=x$ and $x$ is a projection, we see that $(x\varphi\otimes \iota)(\Delta(x))>0$.
By the relation $\Delta(x)(1\otimes h)=x\otimes h$, we have that
$$h(x\varphi\otimes \iota)(\Delta(x))=(x\varphi\otimes \iota)(\Delta(x))h=\varphi(x)h=\varphi(h)h.$$
Therefore $(x\varphi\otimes \iota)(\Delta(x))\geq \varphi(h)h$.
On the other hand, we have
$$\varphi((x\varphi\otimes \iota)(\Delta(x)))=\varphi(x)^2=\varphi(h)^2.$$
Hence $(x\varphi\otimes \iota)(\Delta(x))=\varphi(h)h$ and
$\lambda(x\varphi)^*\lambda(x\varphi)=\varphi(h)\lambda(h\varphi)$. \\
Moreover, $\|\mathcal{F}_1(x)\|_\infty = \varphi(h)$.
\end{proof}

\begin{definition}
Suppose $\mathbb{G}$ is a locally compact quantum group.
An element $x$ in $L^1(\mathbb{G})\cap L^2(\mathbb{G})$ is said to be $p$-extremal if $\|\mathcal{F}_p(x)\|_q=A_p(\mathbb{G})\|x\|_p$, where $p\in[1,2]$, $\frac{1}{p}+\frac{1}{q}$, and $A_p(\mathbb{G})$ is the best constant for the inequality.
A pair $(x,y)$ is said to $(p,q)$-extremal if $\|x*\rho_{-i/p'}(y)\|_r=B_{p,q}(\mathbb{G})\|x\|_p\|y\|_q$, where $x,y\in L^1(\mathbb{G})\cap L^2(\mathbb{G})$, $p,q\in[1,2]$, $\frac{1}{r}+1=\frac{1}{p}+\frac{1}{q}$, and $B_{p,q}(\mathbb{G})$ is the best constant for the inequality.
\end{definition}

\begin{remark}
Suppose $\mathbb{G}$ is a locally compact quantum group with a group -like projection in $\mathcal{L}_\varphi$.
Then, by Proposition \ref{hyglp123}, every group-like projection $h$ in $\mathcal{L}_\varphi$ is $p$-extremal and every pair $(h,h)$ is $(p,q)$-extremal.
\end{remark}

\begin{corollary}
Suppose $\mathbb{G}$ is a locally compact quantum group and $x$ is a left shift of a group-like projection $h$.
Then $x$ is $p$-extremal. If $\varphi=\psi$, we have $(R(x),x)$ is $(p,q)$-extremal.
\end{corollary}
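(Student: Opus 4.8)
The plan is to verify the two defining equalities directly, using that the mere presence of the group-like projection $h\in\mathcal{L}_\varphi$ pins down the relevant best constants. Since $x$ is a left shift of $h$, the quantum group $\mathbb{G}$ carries a group-like projection in $\mathcal{L}_\varphi$, so Proposition \ref{hyglp123} makes both the Hausdorff-Young and (when $\varphi=\psi$) the Young inequality sharp; that is, $A_p(\mathbb{G})=1$ and $B_{p,q}(\mathbb{G})=1$. It therefore suffices to prove $\|\mathcal{F}_p(x)\|_q=\|x\|_p$, and, under $\varphi=\psi$, that $\|R(x)*\rho_{-i/p'}(x)\|_r=\|R(x)\|_p\|x\|_q$.

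For $p$-extremality I would compute both norms in the Hilsum picture exactly as in the proof of Proposition \ref{hyglp123}. Since $x$ is a projection with $\sigma_t^\varphi(x)=x$ (Proposition \ref{lrshift}), $x$ commutes with $d^{1/p}$ and $\|x\|_p=\|[xd^{1/p}]\|_{p,L^p(\phi)}=\varphi(x)^{1/p}=\varphi(h)^{1/p}$. On the transform side, $\|\mathcal{F}_p(x)\|_q=\|[\lambda(x\varphi)\hat{d}^{1/q}]\|_{q,L^q(\hat\phi)}$. By Proposition \ref{bipar} one has $\mathcal{F}_1(x)^*\mathcal{F}_1(x)=\varphi(h)\mathcal{F}_1(h)$, while by Proposition \ref{glpbi} the element $p_0:=\varphi(h)^{-1}\mathcal{F}_1(h)$ is a group-like projection with $\hat\varphi(p_0)=\varphi(h)^{-1}$. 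Hence $|\lambda(x\varphi)|=\varphi(h)p_0$, and the polar decomposition reads $\lambda(x\varphi)=\varphi(h)v$ with $v^*v=p_0$. Applying Proposition \ref{glp} to $\hat{\mathbb{G}}$ gives $\hat\sigma_t^{\hat\varphi}(p_0)=p_0$, so $p_0$ commutes with $\hat{d}$ and $(v\hat{d}^{1/q})^*(v\hat{d}^{1/q})=p_0\hat{d}^{2/q}$. Thus $\|[\lambda(x\varphi)\hat{d}^{1/q}]\|_q=\varphi(h)\,\hat\varphi(p_0)^{1/q}=\varphi(h)^{1-1/q}=\varphi(h)^{1/p}=\|x\|_p$, giving $p$-extremality. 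Note the eigenvector relation of Corollary \ref{lrc} is not needed here, since the modulus computation rests only on $v^*v=p_0$ and $[p_0,\hat{d}]=0$.

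For $(p,q)$-extremality with $\varphi=\psi$ I would first strip off the modular twist. When $\varphi=\psi$ the modular element is $\delta=1$, so from $x\delta^{it}=\mu_x^{it}x$ (Proposition \ref{lrshift}) we get $\mu_x=1$; by Corollary \ref{lrc} this yields $\rho_{-i/p'}(x\varphi)=\mu_x^{-1/p'}(x\varphi)=x\varphi$ at the level of functionals. Consequently the $L^r$-convolution $R(x)*\rho_{-i/p'}(x)$ is identified with the functional convolution $R(x)\varphi*x\varphi$, which by the computation inside the proof of Proposition \ref{bipar} (where $R(x)\psi*x\varphi=\varphi(h)h\varphi$, and $\psi=\varphi$) equals $\varphi(h)\,h\varphi$. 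Computing the $r$-norm as before, $\|R(x)*\rho_{-i/p'}(x)\|_r=\varphi(h)\|h\|_r=\varphi(h)^{1+1/r}$. On the other side $R(x)$ is again a projection in $\mathcal{L}_\varphi$ that is $\sigma^\varphi$-invariant (using $R\sigma_t^\varphi=\sigma_{-t}^\varphi R$ and $\sigma_t^\varphi(x)=x$) with $\varphi(R(x))=\psi(x)=\varphi(h)$, so $\|R(x)\|_p\|x\|_q=\varphi(h)^{1/p}\varphi(h)^{1/q}=\varphi(h)^{1+1/r}$, and the two sides coincide.

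The main obstacle I anticipate is the careful bookkeeping in the Hilsum spaces: justifying that $\mathcal{F}_p(x)$ is represented by $[\lambda(x\varphi)\hat{d}^{1/q}]$ under the isometry $\hat\Phi_q$, that the modulus and norm manipulations are legitimate even though $\lambda(x\varphi)$ only quasi-commutes with $\hat{d}$ (via $\hat\sigma_t^{\hat\varphi}(\lambda(x\varphi))=\mu_x^{-it}\lambda(x\varphi)$), and that the $L^r$-convolution of Theorem \ref{younglcqg} indeed agrees with the functional convolution for the bounded, $\rho$-analytic elements $R(x)\varphi,\,x\varphi\in\mathcal{I}$ obtained as constant nets. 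Each of these reduces to the single structural fact that the support projection $p_0$ is group-like and hence commutes with $\hat{d}$, which is precisely where the shift-of-group-like-projection hypothesis does the work.
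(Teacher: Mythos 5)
Your proposal is correct and follows essentially the same route as the paper: compute $\|x\|_p=\varphi(h)^{1/p}=\|h\|_p$ from $\sigma_t^\varphi$-invariance, deduce $\|\mathcal{F}_p(x)\|_q=\|\mathcal{F}_p(h)\|_q$ from the identity $\mathcal{F}_1(x)^*\mathcal{F}_1(x)=\varphi(h)\mathcal{F}_1(h)$ of Proposition \ref{bipar}, and use $R(x)\psi*x\varphi=\varphi(h)h\varphi$ for the convolution side. Your explicit polar-decomposition computation of $|\lambda(x\varphi)|=\varphi(h)p_0$ and your handling of the $\rho_{-i/p'}$ twist are just more detailed versions of steps the paper leaves implicit.
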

\begin{proof}
Note that $\sigma_t^\varphi(x)=x$ and $\varphi(x)=\varphi(h)$, we have
$$\|x\|_p=\|xd^{1/p}\|_{p,L^p(\phi)}=\varphi(h)^{1/p}=\|h\|_p.$$
By Proposition \ref{bipar}, we have $\|\lambda(x\varphi)\|_q=\|\lambda(h\varphi)\|_q$. Hence $x$ is $p$-extremal.

Note that $R(x)\psi * x\varphi=\varphi(h) h\varphi$, we can see that $\|R(x)*x\|_r=\varphi(h)^{\frac{1}{r}+1}$, $\|R(x)\|_p=\varphi(h)^{1/p}$, $\|x\|_q=\varphi(h)^{1/q}$.
\end{proof}

\begin{definition}
Suppose $\mathbb{G}$ is a locally compact quantum group and $h$ is a group-like projection in $\mathcal{L}_\varphi$, $\tilde{h}$ is the range projection of $\mathcal{F}_1(h)$.
A nonzero element $x$ in $\mathcal{L}_\varphi$ is said to be a bi-shift of a biprojection $h$ if there exist a left shift $x_h$ of $h$ and a left shift $x_{\tilde{h}}$ of $\tilde{h}$ and an element $y\in\mathcal{L}_\varphi$ such that $\tau_t(y)=\sigma_t^\varphi(y)=y$ for all $t\in\mathbb{R}$ and
$$x=(x_hy)*\widehat{\mathcal{F}}_1(x_{\tilde{h}}).$$
\end{definition}

\begin{theorem}
Suppose $\mathbb{G}$ is a locally compact quantum group and $x\in\mathcal{L}_\varphi$ is a bishift of a group-like projection $h$ as above. Then $\sigma^\varphi_t(x)=x$, $\delta^{it}x=\mu_{x_h}^{it}x$ for all $t\in\mathbb{R}$, some $\mu_{x_h}>0$, $x$ is bi-partial isometry and $\|\mathcal{F}_1(x)\|_\infty=\|x\|_1$. Moreover, $x$ is $p$-extremal for $1\leq p\leq 2$.
\end{theorem}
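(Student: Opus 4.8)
The plan is to transport everything to the dual quantum group through the Fourier transform $\mathcal{F}_1=\lambda(\,\cdot\,\varphi)$, which converts convolution into operator multiplication, $\lambda(\omega*\theta)=\lambda(\omega)\lambda(\theta)$, and to reduce all four assertions to the already-established behaviour of left shifts. Applying $\mathcal{F}_1$ to $x=(x_hy)*\widehat{\mathcal{F}}_1(x_{\tilde{h}})$ gives
$$\mathcal{F}_1(x)=\mathcal{F}_1(x_hy)\,\mathcal{F}_1\bigl(\widehat{\mathcal{F}}_1(x_{\tilde{h}})\bigr),$$
and the inversion relation behind Remark \ref{glpbir} (namely that $\mathcal{F}_1$ and $\widehat{\mathcal{F}}_1$ invert one another on the group-like data) identifies $\mathcal{F}_1(\widehat{\mathcal{F}}_1(x_{\tilde{h}}))$ with the left shift $x_{\tilde{h}}$ of $\tilde{h}$. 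Thus $\mathcal{F}_1(x)$ factors through the two shift data $x_h$, $x_{\tilde{h}}$ and the covariant factor $y$, and dually $x$ itself is realized as a convolution structure on the primal side. First I would record the domain and analyticity facts that make these manipulations legitimate, i.e. that $x\in\mathcal{L}_\varphi$ and that the relevant elements are $\rho$-analytic, so that $\mathcal{F}_1$, $\widehat{\mathcal{F}}_1$ and the unbounded $\delta$, $d$, $\hat{d}$ may be moved past one another freely.

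Next I would settle the covariance statements. From Proposition \ref{lrshift} the left shift $x_h$ satisfies $\tau_t(x_h)=\sigma_t^\varphi(x_h)=x_h$ and $x_h\delta^{it}=\mu_{x_h}^{it}x_h$, while $y$ is $\tau$- and $\sigma^\varphi$-invariant by hypothesis; since $\tau_t$ and $\sigma_t^\varphi$ are automorphisms, $x_hy$ is again $\tau$- and $\sigma^\varphi$-invariant. Using the comultiplication covariances $\Delta\sigma_t^\varphi=(\tau_t\otimes\sigma_t^\varphi)\Delta$ and $\Delta\tau_t=(\sigma_t^\varphi\otimes\sigma_{-t}^\psi)\Delta$ together with the convolution formula, I would push $\sigma_t^\varphi$ through the convolution and invoke the dual covariance of $x_{\tilde{h}}$ (Corollary \ref{lrc}) to conclude $\sigma_t^\varphi(x)=x$. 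For the $\delta$-homogeneity I would first show that $\widehat{\mathcal{F}}_1(x_{\tilde{h}})$ is $\delta$-invariant by translating the $\hat{\delta}$-invariance of the Fourier transform of a dual shift (Corollary \ref{lrc} applied in $\widehat{\mathbb{G}}$) back to $\mathbb{G}$; combining this with $x_h\delta^{it}=\mu_{x_h}^{it}x_h$, the relation $\Delta(\delta)=\delta\otimes\delta$ and Corollary \ref{mo} then yields $\delta^{it}x=\mu_{x_h}^{it}x$.

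For the bi-partial isometry claim and the norm identities I would show that $\mathcal{F}_1(x)^*\mathcal{F}_1(x)$ and $x^*x$ each collapse to a multiple of a group-like projection. The template is Proposition \ref{bipar}, whose identity $\mathcal{F}_1(x_h)^*\mathcal{F}_1(x_h)=\varphi(h)\mathcal{F}_1(h)$ says precisely that a single shift is a bi-partial isometry; using the factorization of $\mathcal{F}_1(x)$ from the first step, the projection structure of $x_{\tilde{h}}$, the shift intertwiners (such as $\hat{\Delta}(\tilde{h})(1\otimes x_{\tilde{h}})=\hat{R}(x_{\tilde{h}})\otimes x_{\tilde{h}}$) and the covariance of $y$, I would reduce $\mathcal{F}_1(x)^*\mathcal{F}_1(x)$ to a multiple of a group-like projection, so that $\mathcal{F}_1(x)$ is a multiple of a partial isometry; the symmetric computation on the primal side gives the same for $x$. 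Finally, since $x$ is $\sigma^\varphi$-invariant and a multiple of a partial isometry, its $L^p$-norm is computed through its range projection exactly as in Proposition \ref{hyglp123}, $\|x\|_p=\|[x\,d^{1/p}]\|_{p,L^p(\phi)}$, and likewise $\|\mathcal{F}_1(x)\|_q$ via $\hat{\varphi}$ and the $\hat{\sigma}^{\hat{\varphi}}$-invariance; comparing the resulting support traces through the identity $\mathcal{F}_1(x)^*\mathcal{F}_1(x)=c\,\mathcal{F}_1(h)$ yields $\|\mathcal{F}_1(x)\|_\infty=\|x\|_1$ and, by matching with the group-like case in which the best constant $A_p(\mathbb{G})$ is attained (Proposition \ref{hyglp123}), the $p$-extremality for $1\le p\le 2$.

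The main obstacle is the bi-partial isometry property in the presence of the free covariant factor $y$: I must show that convolving $x_hy$ against the dual shift does not destroy the partial-isometry structure, i.e. that both $x^*x$ and $\mathcal{F}_1(x)^*\mathcal{F}_1(x)$ genuinely reduce to multiples of group-like projections rather than to more complicated positive elements. This is where the full strength of the shift intertwining identities $\Delta(x_h)(1\otimes h)=x_h\otimes h$, $\Delta(h)(1\otimes x_h)=R(x_h)\otimes x_h$ and their $\tilde{h}$-counterparts, combined with the covariance $\tau_t(y)=\sigma_t^\varphi(y)=y$, must conspire to absorb $y$; making this cancellation precise, while simultaneously controlling the unbounded operators $\delta$, $d$, $\hat{d}$ and the domains on which $\mathcal{F}_1$ and $\widehat{\mathcal{F}}_1$ act, is the technical heart of the argument.
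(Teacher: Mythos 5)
There is a genuine gap at what you yourself identify as the technical heart. You propose to prove that $x$ and $\mathcal{F}_1(x)$ are multiples of partial isometries by a direct algebraic computation showing that $x^*x$ and $\mathcal{F}_1(x)^*\mathcal{F}_1(x)$ collapse to multiples of group-like projections, with the free factor $y$ being ``absorbed'' by the shift intertwining identities. This cannot work as described: since $y$ is an arbitrary $\tau$- and $\sigma^\varphi$-invariant element of $\mathcal{L}_\varphi$, the element $\mathcal{F}_1(x)^*\mathcal{F}_1(x)=x_{\tilde h}\,\mathcal{F}_1(x_hy)^*\mathcal{F}_1(x_hy)\,x_{\tilde h}$ is the compression of an essentially arbitrary positive operator by the projection $x_{\tilde h}$, and no algebraic identity forces such a compression to be a multiple of a projection. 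The bi-partial-isometry property here is an uncertainty-principle phenomenon, and the paper obtains it by a different mechanism: it first proves only the \emph{support estimates} $\mathcal{R}(x^*)\le x_h$ (via the operator Cauchy--Schwarz inequality, Lemma 9.5 of \cite{KuVaes}, applied to the slice-map realization $x=(((x_hy)\varphi)R\otimes\iota)\Delta(\widehat{\mathcal{F}}_1(x_{\tilde h}))$) and $\mathcal{R}(\mathcal{F}_1(x)^*)\le x_{\tilde h}$, and then closes the chain $\|\mathcal{F}_1(x)\|_\infty\le\|x\|_1\le\varphi(\mathcal{R}(x^*))^{1/2}\|\mathcal{F}_1(x)\|_2\le\varphi(x_h)^{1/2}\hat{\varphi}(\mathcal{R}(\mathcal{F}_1(x)^*))^{1/2}\|\mathcal{F}_1(x)\|_\infty\le\varphi(h)^{1/2}\hat{\varphi}(\tilde h)^{1/2}\|\mathcal{F}_1(x)\|_\infty$, which returns to $\|\mathcal{F}_1(x)\|_\infty$ because $\varphi(h)\hat{\varphi}(\tilde h)=1$ (Proposition \ref{glpbi}). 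Forcing equality throughout is precisely what yields both $\|\mathcal{F}_1(x)\|_\infty=\|x\|_1$ and the partial-isometry structure (equality in the Cauchy--Schwarz step $\|x\|_1\le\varphi(\mathcal{R}(x^*))^{1/2}\|x\|_2$ forces $|x|$ to be a multiple of its support projection). Your outline contains neither the support estimates nor the normalization $\varphi(h)\hat{\varphi}(\tilde h)=1$, and without them the argument does not close.

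A secondary point: the identity $\mathcal{F}_1(x)^*\mathcal{F}_1(x)=c\,\mathcal{F}_1(h)$ from which you want to read off $\|\mathcal{F}_1(x)\|_\infty=\|x\|_1$ presupposes the very bi-partial-isometry statement you have not yet established, and ``matching with the group-like case'' does not by itself give $p$-extremality of $x$ for intermediate $p$; that follows from the two endpoint equalities $\|\mathcal{F}_1(x)\|_\infty=\|x\|_1$ and $\|\mathcal{F}_2(x)\|_2=\|x\|_2$. The covariance portion of your outline (the $\sigma^\varphi_t$- and $\delta^{it}$-behaviour of $x$, derived from Proposition \ref{lrshift}, Corollary \ref{lrc}, $\Delta\sigma_t^\varphi=(\tau_t\otimes\sigma_t^\varphi)\Delta$ and $\Delta(\delta)=\delta\otimes\delta$) does match the paper's argument and is fine.
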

\begin{proof}
By Proposition \ref{lrshift}, we have $\tau_t(x_h)=x_h$ and hence
$$x=(((x_hy)\varphi)R\otimes \iota)\Delta(\widehat{\mathcal{F}}_1(x_{\tilde{h}})).$$
By Corollary \ref{lrc}, we see that
\begin{eqnarray*}
\tau_t(\widehat{\mathcal{F}}_1(x_{\tilde{h}}))=\widehat{\mathcal{F}}_1(x_{\tilde{h}}),\quad
\sigma_t^\varphi(\widehat{\mathcal{F}}_1(x_{\tilde{h}}))=\mu_{x_{\tilde{h}}}^{-it}\widehat{\mathcal{F}}_1(x_{\tilde{h}}),
\end{eqnarray*}
for some $\mu_{x_{\tilde{h}}}>0$ and all $t\in\mathbb{R}$.

From the relation $\Delta\sigma_t^\varphi=(\tau_t\otimes \sigma_t^\varphi)\Delta$, we have
\begin{eqnarray*}
\sigma_t^\varphi(x)&=&((x_hy)\varphi R\otimes\sigma_t^\varphi)\Delta(\widehat{\mathcal{F}}_1(x_{\tilde{h}}))\\
&=&((x_hy)\varphi R\tau_t\otimes\sigma_t^\varphi)\Delta(\widehat{\mathcal{F}}_1(x_{\tilde{h}}))\\
&=&\mu_{x_{\tilde{h}}}^{-it}((x_hy)\varphi R\otimes\iota)\Delta(\widehat{\mathcal{F}}_1(x_{\tilde{h}}))\\
&=&\mu_{x_{\tilde{h}}}^{-it}x,
\end{eqnarray*}
i.e. $\sigma_t^\varphi(x)=\mu_{x_{\tilde{h}}}^{-it}x$ and $\sigma_t^\varphi(x^*x)=x^*x$.

From the relation $\delta^{it}\otimes \delta^{it}=\Delta(\delta^{it})$, we see that
\begin{eqnarray*}
\delta^{it}x&=&((x_hy)\varphi R\otimes \delta^{it})\Delta(\widehat{\mathcal{F}}_1(x_{\tilde{h}}))\\
&=&(((x_hy)\varphi) R\delta^{-it}\otimes \iota)\Delta(\delta^{it}\widehat{\mathcal{F}}_1(x_{\tilde{h}}))\\
&=&\mu_{x_h}^{it}x,
\end{eqnarray*}
for some $\mu_{x_h}>0$ and all $t\in\mathbb{R}$.

Let $q=\mathcal{R}(y^*x_h)$ be the range projection of $y^*x_h$.
Since $\sigma_t^\varphi(y)=y$ and $\sigma_t^\varphi(x_h)=x_h$,
we have $\varphi(q)=\varphi(\mathcal{R}(x_hy))\leq \varphi(x_h)<\infty$. By Lemma 9.5 in \cite{KuVaes}, we have that
\begin{eqnarray*}
\lefteqn{\frac{1}{\varphi(q)}(\iota\otimes\omega_{\Lambda_\varphi(x_hy),\Lambda_\varphi(q)})\Delta(R(\widehat{\mathcal{F}}_1(x_{\tilde{h}})))(\iota\otimes\omega_{\Lambda_\varphi(x_hy),\Lambda_\varphi(q)})\Delta(R(\widehat{\mathcal{F}}_1(x_{\tilde{h}})))^*}\\
&\leq&\iota\otimes\omega_{\Lambda_\varphi(x_hy),\Lambda_\varphi(x_hy)}\Delta(R(\widehat{\mathcal{F}}_1(x_{\tilde{h}}))R(\widehat{\mathcal{F}}_1(x_{\tilde{h}}))^*)
\end{eqnarray*}
On the other hand,
\begin{eqnarray*}
\lefteqn{\frac{1}{\varphi(q)}(\iota\otimes\omega_{\Lambda_\varphi(x_hy),\Lambda_\varphi(q)})\Delta(R(\widehat{\mathcal{F}}_1(x_{\tilde{h}})))(\iota\otimes\omega_{\Lambda_\varphi(x_hy),\Lambda_\varphi(x_h)})\Delta(R(\widehat{\mathcal{F}}_1(x_{\tilde{h}})))^*}\\
&=&\frac{1}{\varphi(q)}((x_hy)\varphi R\otimes R)\Delta(\widehat{\mathcal{F}}_1(x_{\tilde{h}}))(((x_hy)\varphi R\otimes R)\Delta(\widehat{\mathcal{F}}_1(x_{\tilde{h}})))^*\\
&=&\frac{1}{\varphi(q)}R(x)R(x)^*
\end{eqnarray*}
and by Proposition \ref{bipar} and Remark \ref{glpbir}, we can obtain
\begin{eqnarray*}
\lefteqn{\iota\otimes\omega_{\Lambda_\varphi(x_hy),\Lambda_\varphi(x_hy)}\Delta(R(\widehat{\mathcal{F}}_1(x_{\tilde{h}}))R(\widehat{\mathcal{F}}_1(x_{\tilde{h}}))^*)}\\
&=&\hat{\varphi}(\tilde{h})(\iota\otimes (x_hy)(\varphi)(y^*x_h))\Delta(R(\widehat{\mathcal{F}}_1(\tilde{h})))\\
&=&\frac{1}{\varphi(h)^2}(\iota\otimes (x_hy)(\varphi)(y^*x_h))\Delta(h)\\
&=&\frac{1}{\varphi(h)^2}R(x_h)\varphi(y^*x_hy).
\end{eqnarray*}
Therefore
$$x^*x\leq \frac{\varphi(q)\varphi(y^*x_hy)}{\varphi(h)^2}x_h,$$
i.e. $\mathcal{R}(x^*)\leq x_h$.

Note that $\mathcal{F}_1(x)^*=x_{\tilde{h}}\mathcal{F}_1(x_hy)^*$. Then $\mathcal{R}(\mathcal{F}_1(x)^*)\leq x_{\tilde{h}}$.

Since $\sigma_t^\varphi(x^*x)=x^*x$, we obtain $\sigma_t^\varphi(\mathcal{R}(x^*))=\mathcal{R}(x^*)$. By Proposition \ref{glpbi}, we see
\begin{eqnarray*}
\|\mathcal{F}_1(x)\|_\infty&\leq &\|x\|_1=\|x\mathcal{R}(x^*)d\|_{1,L^1(\phi)}\\
&\leq &\|\mathcal{R}(x^*)d^{1/2}\|_{2,L^2(\phi)}\|xd^{1/2}\|_{2,L^2(\phi)}\\
&=&\varphi(\mathcal{R}(x^*))^{1/2}\|\mathcal{F}_1(x)\|_2\\
&=&\varphi(\mathcal{R}(x^*))^{1/2}\|\mathcal{F}_1(x)\mathcal{R}(\mathcal{F}_1(x)^*)\hat{d}^{1/2}\|_{2,L^2(\hat{\phi})}\\
&\leq &\varphi(x_h)^{1/2}\|\mathcal{F}_1(x^*)\|_\infty\hat{\varphi}(\mathcal{R}(\mathcal{F}_1(x)^*))^{1/2}\\
&\leq&\varphi(h)^{1/2}\hat{\varphi}(\tilde{h})^{1/2}\|\mathcal{F}_1(x)\|_\infty=\|\mathcal{F}_1(x)\|_\infty.
\end{eqnarray*}
Hence the inequalities above must be equalities and we have that $x$ is a bi-partial isometry and $\|\mathcal{F}_1(x)\|_\infty=\|x\|_1$.
Now we have that $x$ is $p$-extremal for Hausdorff-Young inequality.
\end{proof}

% This environment is for acknowledgements
\begin{ac}
Part of the work was done during multiple of visits of Jinsong Wu to Hebei Normal University and Jinsong Wu was supported by NSFC (Grant no. A010602).
Zhengwei Liu was supported by a grant from Templeton Religion Trust and an AMS-Simons Travel Grant.
%The authors would like to thank the referee for careful reading.
%Here are the acknowledgements.
\end{ac}

% We give three examples on how to type the bibliographical items, for
% articles, books, and dissertations.

\end{document}